\documentclass{amsart}

\newtheorem{thmx}{Theorem}

\usepackage[english]{babel}
\usepackage[utf8]{inputenc}
\usepackage{amsmath}
\usepackage{amsfonts}
\usepackage{mathrsfs}
\usepackage{mathtools}
\mathtoolsset{showonlyrefs}

\usepackage{esint}
\usepackage[colorlinks=true, allcolors=red, linktocpage=true]{hyperref}
\usepackage{amssymb}
\usepackage{graphicx}
\usepackage{commath}
\usepackage{cases}
\usepackage{bm}
\usepackage{dsfont}
\usepackage{xcolor}
\usepackage{verbatim}
\usepackage{todonotes}
\usepackage{enumitem}

\DeclareRobustCommand{\SkipTocEntry}[5]{}

\usepackage{amsthm}

\theoremstyle{plain}

\newtheorem{thm}{Theorem}[section]
\newtheorem{prop}[thm]{Proposition}
\newtheorem{lem}[thm]{Lemma}

\newtheorem{cor}[thm]{Corollary}

\numberwithin{equation}{section}

\theoremstyle{definition}

\newtheorem{defn}[thm]{Definition}

\newtheorem{rem}[thm]{Remark}

\makeatletter
\newcommand*{\rom}[1]{\expandafter\@slowromancap\romannumeral #1@}
\makeatother

\newcommand{\ve}{\varepsilon}

\newcommand{\N}{\mathbb{N}}

\newcommand{\R}{\mathbb{R}}

\newcommand{\Z}{\mathbb{Z}}

\newcommand{\calD}{\mathcal{D}}

\newcommand{\calL}{\mathcal{L}}

\newcommand{\calQ}{\mathcal{Q}}

\newcommand{\cubes}{\calD}

\def\supp{\mathop\mathrm{supp}} 
\def\diam{\mathop\mathrm{diam}}

\def\RCD{\mathrm{RCD}}
\def\myHess{\mathrm{Hess}}
\def\Test{\mathrm{Test}}
\def\HS{\mathrm{HS}}
\def\CPI{C_\mathrm{PI}}
\def\API{A_\mathrm{PI}}
\def\Cdoub{C_\mu}

\DeclarePairedDelimiter\ceil{\lceil}{\rceil}

\def\Xint#1{\mathchoice
	{\XXint\displaystyle\textstyle{#1}}%
	{\XXint\textstyle\scriptstyle{#1}}%
	{\XXint\scriptstyle\scriptscriptstyle{#1}}%
	{\XXint\scriptscriptstyle\scriptscriptstyle{#1}}%
	\!\int}
\def\XXint#1#2#3{{\setbox0=\hbox{$#1{#2#3}{\int}$ }
		\vcenter{\hbox{$#2#3$ }}\kern-.6\wd0}}

\def\dashint{\Xint-}

\newcommand{\har}[1]{H_{#1}}
\newcommand{\harosc}[1]{H_{#1}^{\mathrm{osc}}}

\newcommand{\harrcd}[1]{H_{#1}^{\mathrm{RCD}}}

\newcommand{\harD}{H}
\newcommand{\cRCD}{\mathfrak{c}}
\newcommand{\cradial}{\varrho}

\title[Quantitative harmonic approximations]{Quantitative harmonic approximations and Dorronsoro's Theorem in metric measure spaces}

\author{Matthew Hyde}

\address{Matthew Hyde\\
	Department of Mathematics and Statistics \\
	University of Jyv\"askyl\"a \\
	P.O. Box 35 (MaD) \\
	FI-40014 University of Jyv\"askyl\"a \\
	Finland}
\email{matthew.j.hyde@jyu.fi}

\subjclass[2020]{46E36, 30L99, 28A75}
\keywords{Differentiability, Poincar\'e inequality, RCD spaces.}
\thanks{M.H. is supported by the Research Council of Finland via the project \textit{Quantitative differentiability and rectifiability in metric spaces}, grant no. 363800.}

\begin{document}

		\begin{abstract}
		Suppose $X$ is an $\RCD(K,N)$ space with $K \in \R$ and $N \in (1,\infty)$. We obtain a characterisation of the Newtonian-Sobolev space $N^{1,2}(X)$ in terms of a quantity which measures to what extent a function is locally (across all scales and locations) well-approximated by harmonic functions. A similar characterisation is obtained which further takes into account the local oscillations of the approximating harmonic functions. The first characterisation is new even when $X = \R^n$; the second characterisation is a version of Dorronsoro's Theorem in RCD spaces and gives a new proof of (a special case) of this theorem in Euclidean space. 
	\end{abstract}
	
	\maketitle

\tableofcontents

\section{Introduction}

It is well known that every Lipschitz function $f \colon \R^n \to \R$ is differentiable at $\calL^n$-a.e. point in $\R^n$, where $\calL^n$ denotes the $n$-dimensional Lebesgue measure -- this is Rademacher's Theorem. While this gives strong structural information about $f$ at infinitesimal scales, it says nothing about the structure of $f$ at coarse scales. A quantitative version of Rademacher's Theorem, which does provide structural information at coarse scales, was proven by Dorronsoro \cite{dorronsoro1985characterization}. The following is a special case of Dorronosoro's Theorem. 

\begin{thm}\label{t:dor}
	Let $f \in L^2(\R^n)$. Then $f \in W^{1,2}(\R^n)$ if and only if 
	\begin{align}\label{e:dor}
		\Omega(f) \coloneqq \int_0^\infty \int_{\R^n} \Omega_f(x,r)^2 \, \frac{dxdr}{r} < \infty.
		\end{align}
		Here,
		\begin{align}
			\Omega_{f}(x,r) = \inf_A \left( \dashint_{B(x,r)} \left(\frac{f - A}{r} \right)^2 \, dx \right)^\frac{1}{2}, 
		\end{align}
		where  the infimum is taken over all affine functions $A \colon \R^n \to \R$. In this case, it follows that $\| f \|_{W^{1,2}(X)}^2 \sim \| f \|_{L^2(X)}^2 + \Omega(f)$.
\end{thm}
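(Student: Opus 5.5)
The natural route is through the Fourier transform and Plancherel. The quantity $\Omega(f)$ should be comparable to $\int_{\R^n}|\xi|^2|\hat f(\xi)|^2\,d\xi = c\|\nabla f\|_{L^2}^2$. I would prove the two inequalities $\Omega(f)\lesssim \|\nabla f\|_2^2$ for $f\in W^{1,2}$ and $\|\nabla f\|_2^2\lesssim \Omega(f)$ for $f\in L^2$ with $\Omega(f)<\infty$ separately.

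\emph{Upper bound.} By density and Fatou's lemma applied to the integrand in \eqref{e:dor}, it suffices to treat $f\in C_c^\infty$. Rather than the infimum over affine $A$, I would use a convenient choice: the first-order Taylor polynomial of a mollification $f*\varphi_r$ at $x$, where $\varphi_r$ is a radial bump at scale $r$. Write $f-A$ as $(f-f*\varphi_r)+(f*\varphi_r-A)$. The first term, after dividing by $r$, gives $\int_0^\infty\|f-f*\varphi_r\|_2^2\,r^{-3}dr=\int|\hat f(\xi)|^2\int_0^\infty|1-\hat\varphi(r\xi)|^2r^{-3}dr\,d\xi$. The inner integral equals $c|\xi|^2$, and it is finite because $1-\hat\varphi(\eta)=O(|\eta|^2)$ near $0$ for radial $\varphi$ with unit mass. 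For the second term, Taylor's remainder gives $|f*\varphi_r(y)-A(y)|\lesssim r^2\sup_{B(x,r)}|\nabla^2 (f*\varphi_r)|$. Bounding this sup by an average of $|\nabla f|*|\nabla\varphi_r|$ over a slightly enlarged ball yields a square function in which the kernel $r\nabla^2\varphi_r$ has vanishing integral. The same Plancherel computation, now with $|\eta\,\widehat{\nabla\varphi}(\eta)|^2$, which vanishes at $0$ and decays at $\infty$, bounds it by $\|\nabla f\|_2^2$.

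\emph{Lower bound.} Let $\psi$ be a radial Schwartz function with $\int\psi=0$ and $\int y\psi(y)\,dy=0$, supported in $B(0,1)$, for example $\psi=\Delta\phi$ for a bump $\phi$. Then $\psi_r*A\equiv0$ for every affine $A$. Hence $|f*\psi_r(x)|=|(f-A_{x,r})*\psi_r(x)|\lesssim r\,\Omega_f(x,r)$ by Cauchy--Schwarz. Integrating, and using Plancherel together with the Calder\'on reproducing identity, gives
\[
\int_0^\infty\!\!\int |f*\psi_r|^2\,\frac{dx\,dr}{r^3}=\int|\hat f(\xi)|^2\int_0^\infty|\hat\psi(r\xi)|^2\,\frac{dr}{r^3}\,d\xi=c_\psi\int|\xi|^2|\hat f(\xi)|^2\,d\xi .
\]
Here $\hat\psi(\eta)=-|\eta|^2\hat\phi(\eta)$, so the $r$-integral converges and equals $c_\psi|\xi|^2$ with $c_\psi>0$. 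Therefore $\int|\xi|^2|\hat f|^2\lesssim\Omega(f)<\infty$, which is exactly $f\in W^{1,2}$ with the desired norm bound.

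\emph{Main obstacle.} The delicate point is the upper bound: controlling the infimum over affine functions by a linear quantity on the Fourier side. The lower bound is clean because cancellation against affines is exact. In the upper bound one must handle the local $\sup$ of second derivatives of the mollified function via an $L^2$ square function. I would try replacing the sup by an average over $B(x,2r)$ using smoothness of $f*\varphi_r$ at scale $r$. Alternatively, and more cleanly, I would use the $L^2$ Poincar\'e inequality on $B(x,r)$ applied to $f-A$ with $A$ chosen so that $\nabla A$ is the average gradient. This gives $\Omega_f(x,r)^2\lesssim\dashint_{B(x,r)}|\nabla f-(\nabla f)_{B(x,r)}|^2$. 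The problem then reduces to the square function $\int_0^\infty\|g-g*\chi_r\|_2^2\,dr/r$ for $g=\nabla f$, modulo a variance/averaging identity. This last square function, however, is only bounded by $\|g\|_2^2$ times a divergent logarithm unless one uses extra smoothness, so the Taylor approach above is the one I would actually carry out.
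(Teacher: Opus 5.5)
Your Fourier/Plancherel argument is correct in outline, apart from one step flagged below. It takes a genuinely different route from the paper, which obtains Theorem~\ref{t:dor} only as the Euclidean case of its $\RCD$ results and never uses the Fourier transform.

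For the upper bound, the paper builds harmonic replacements on a tree of Christ--David cubes and telescopes energies via orthogonality. It then runs a stopping-time argument and controls Hessians of harmonic functions by a Bochner-based Caccioppoli inequality, which gives $\sum_Q\harrcd{f}(3B_Q)^2\mu(Q)\lesssim\|\nabla f\|_2^2$ (Theorem~\ref{t:RCD3}). Lemma~\ref{l:RCD-Rn} (mean value property plus Poincar\'e twice) then converts $\harrcd{f}$ into $\Omega_f$.

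For the lower bound, the paper uses $\har{f}\le\Omega_f$ and the heat-semigroup argument of Section~\ref{s:if-statement}. That argument combines spectral calculus, the Hyt\"onen--Naor telescoping lemma, glued harmonic approximants $p_k$, and Caccioppoli to bound $\|\cradial_k\Delta p_k\|_{L^2}$.

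Your proof is far shorter, but it rests on translation invariance and Plancherel. Avoiding exactly these is what lets the paper's argument work in $\RCD(K,N)$ spaces. Note also that your kernel $\psi=\Delta\phi$, with $\phi\in C^\infty_c(B(0,1))$, annihilates every function harmonic in $B(x,r)$, not only affine ones (integrate by parts against $y\mapsto\phi_r(x-y)$). So the same three lines give $\|\nabla f\|_2^2\lesssim\int_0^\infty\int_{\R^n}\har{f}(x,r)^2\,dx\,dr/r$, i.e.\ the Euclidean case of the \emph{if} direction of Theorem~\ref{t:RCD-2}. The semigroup machinery of Section~\ref{s:if-statement} is in effect a substitute for this kernel.

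One step, as written, would fail. You bound $\sup_{B(x,r)}|\nabla^2(f*\varphi_r)|$ by an average of $|\nabla f|*|\nabla\varphi_r|$, which puts the absolute value inside the convolution. Since $\int|\nabla\varphi|>0$, the resulting square function diverges logarithmically at small scales.

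To keep the cancellation, take $\varphi=\phi*\phi$ with $\phi\in C^\infty_c(B(0,1))$ radial of unit mass. Then $r\nabla^2(f*\varphi_r)=(\nabla f*(\nabla\phi)_r)*\phi_r$, where $(\nabla\phi)_r=r\nabla(\phi_r)$ has integral zero. Hence $r\sup_{B(x,r)}|\nabla^2(f*\varphi_r)|\lesssim\dashint_{B(x,2r)}|\nabla f*(\nabla\phi)_r|$, and Jensen and Plancherel finish, since $\int_0^\infty s|\hat\phi(s\omega)|^2\,ds<\infty$.

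Simpler still, drop Taylor and the supremum altogether. Set $F=f*\varphi_r$, $B=B(x,r)$ and $A(y)=\langle F\rangle_B+\langle\nabla F\rangle_B\cdot(y-x)$. Poincar\'e applied twice gives $\dashint_B|(F-A)/r|^2\lesssim\dashint_B|r\nabla^2F|^2$, which integrates in $x$ to $\|r\nabla^2(f*\varphi_r)\|_2^2$. This is the mechanism of Lemma~\ref{l:RCD-Rn}, applied to the mollified function rather than to $f$ itself. Applied to $f$ itself it produces a divergent square function, as you correctly observed.
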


In words, Dorronsoro's Theorem says that a function is in $W^{1,2}(\R^n)$ if and only if it is \textit{sufficiently well-approximated} by affine functions across all scales and location, this being quantified by condition \eqref{e:dor}. The main result in \cite{dorronsoro1985characterization} is more general than stated above, providing similar characterisations of fractional potential spaces, however, Theorem \ref{t:dor} has proven effective in a variety of applications. 

A key application of Theorem \ref{t:dor} is to the theory of \textit{uniformly rectifiable sets}. These sets are a quantitative analogue of rectifiable sets and were first introduced and developed by David and Semmes \cite{david1991singular,david1993analysis}. For example, Theorem \ref{t:dor} is used in \cite{david1991singular} to show that uniformly rectifiable sets satisfy the so-called \textit{strong geometric lemma} (a quantitative tangent condition) and in \cite{azzam2021poincare} to show that subsets of Euclidean space supporting a Poincar\'e inequality are uniformly rectifiable. Dorronsoro's Theorem is an example of \textit{quantitative differentiation} i.e., a result which provides information regarding how soon a function reaches a certain threshold of regularity. Such results have been used \cite{cheeger2013lower,cheeger2013quantitative} to study partial regularity theory in geometric analysis and nonlinear PDE, in \cite{li2014coarse} to study quantitative non-embeddability of Carnot groups, and in \cite{cheeger2011compression} to study the sparsest cut problem from theoretical computer science. \\

The purpose of this paper is to prove a version of Dorronsoro's Theorem in the class of metric measure spaces satisfying the \textit{Riemann curvature dimension} condition $\RCD(K,N)$ -- a generalisation of the class of Riemannian manifolds with lower bounds on the Ricci curvature. For the relevant background and historical notes on $\RCD$ spaces, we direct the reader to the surveys \cite{ambrosio2018calculus,gigli2023giorgi}. For now, we note that the $\RCD(K,N)$ condition is a Riemannian counterpart to the \textit{curvature dimension} condition CD$(K,N)$ introduced independently in the works \cite{lott2009ricci} and \cite{sturm2006I,sturm2006II}. While the $\RCD(K,N)$ and CD$(K,N)$ conditions are both notions of having Ricci curvature bounded below, the $\RCD(K,N)$ condition differs from the CD$(K,N)$ condition in that it prevents Finsler-type geometry. Classical examples of $\RCD(K,N)$ spaces are Riemannian manifolds with Ricci curvature bounded below, Ricci limit spaces (see \cite{cheeger1997structure}), and Alexandrov spaces with (sectional) curvature bounded below (see \cite{petrunin2011alexandrov,zhang2010ricii}). Such spaces are not necessarily locally Euclidean, can have both non-unique and non-Euclidean tangents, and can admit conical singularities. 

Any space which satisfies the $\RCD(K,N)$ condition with $K \in \R$ and $N \in (1,\infty)$ satisfies a Rademacher-type theorem. Indeed, every such space is locally doubling and, by \cite{rajala2012interpolated,rajala2012local}, admits a local Poincar\'e inequality in the sense of Heinonen and Koskela \cite{heinonen1998quasiconformal}, see \eqref{e:PI}. The seminal work of Cheeger \cite{cheeger1999differentiability} then implies that every Lipschitz function is \textit{infinitesimally generalised linear} at almost every point, meaning, every blow-up of the function at such a point is harmonic (defined via minimisation of a certain Dirichlet energy) with constant energy density, see \cite{cheeger1999differentiability} for the precise definition. 

A quantitative version of the above result has appeared in \cite{cheeger2012quantitative,cheeger2023quantitative}, where it is shown that any Lipschitz function is well-approximated by harmonic functions at \textit{most} locations and scales. Here, \textit{most} is quantified by a certain \textit{weak Carleson condition}. A different quantitative differentiation result for $\RCD(K,N)$ spaces, also in terms of a weak-type Carleson condition, was proved in \cite{hyde2024ricci} as a consequence of its (quantitative) rectifiability properties. These quantitative results differ from Theorem \ref{t:dor} in the sense that they only give control over the measure of the scales and locations which exhibit bad behaviour, they do not provide integrability of the coefficients which measure the approximations.

\addtocontents{toc}{\SkipTocEntry}
\subsection*{Main results} In Theorem \ref{t:RCD} below we characterise the Newtonian-Sobolev space $N^{1,2}(X)$ (see \cite{shan2000newtonian}) in the case that $X$ is an $\RCD(K,N)$ space. We obtain this characterisation, in the spirit of Theorem \ref{t:dor}, by considering a square function which measures the deviation from a certain model class. Of course, affine functions do not make sense in such generality, so we define a new coefficient $\harosc{f}$ which measures local approximations by harmonic functions while also taking into account the oscillation of the gradient of the approximating function. 

More precisely, suppose $(X,d,\mu)$ is an $\RCD(K,N)$ space. For a function $f \in L^2(\mu)$ and a ball $B(x,r) \subset X,$ define
\begin{align}\label{e:h-osc}
	\harosc{f}(x,r)^2 = \inf_h \dashint_{B(x,r)} \left(\frac{f - h}{r} \right)^2 + \left( g_h - \langle g_h  \rangle_{B(x,r)} \right)^2 \, d\mu,
\end{align}
where the infimum is taken over all functions $h$ which are \textit{harmonic} (see \eqref{e:harmonics}) in $B(x,r)$, $g_h$ is the \textit{minimal weak upper gradient} for $h$ in $B(x,r)$ (see \eqref{e:upper-grad}) and $\langle g_h \rangle_{B(x,r)}$ is the average of $g_h$ over $B(x,r)$. As previously mentioned, harmonicity is defined via minimisation of a certain Dirichlet energy; the minimal upper gradient is a metric analogue of $|\nabla h|.$ 

The definition of $\harosc{f}$ is motivated by the following fact -- a function $h \colon \R^n \to \R$ is affine if and only if it is harmonic and satisfies $g_h \equiv c$ i.e., if and only if it is generalised linear (see \cite{cheeger2012quantitative}). It is tempting to seek a version of Theorem \ref{t:dor} which characterises the Sobolev space in terms of approximations by generalised linear functions. However, in such generality, one cannot expect the existence of any non-constant generalised linear function. It turns out that the quantity \eqref{e:h-osc} is a good replacement for \textit{approximating by generalised linear functions} without assuming the existence of such objects, as demonstrated below.

	\begin{thmx}\label{t:RCD}
	Let $(X,d,\mu)$ be an $\RCD(K,N)$ space with $K \in \R$ and $N \in (1,\infty)$, assume that $X$ is bounded in the case that $K < 0$, and let $f \in L^2(X).$ Then, $f \in N^{1,2}(X)$ if and only if 
	\begin{align}\label{e:RCD}
		H^{\emph{osc}}(f) \coloneqq \int_0^{\diam(X)} \int_X \harosc{f}(x,r)^2 \, \frac{d\mu dr}{r} < \infty. 
	\end{align}
	In this case, we have 
	 \begin{align}
	 	\|\nabla f\|_{L^2(X)}^2 \sim \left\| \frac{f - \langle f \rangle_X}{\diam(X)} \right\|_{L^2(X)}^2 + H^\emph{osc}(f),
	 \end{align}
	 where the implicit constants depend only on $N$ in the case that $K \geq 0$, and only on $N,K$ and $\diam(X)$ in the case that $K < 0$. Here, $\nabla$ is the canonical choice of gradient operator on $X$, see Section \ref{s:prelims}. 
\end{thmx}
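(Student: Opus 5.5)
Both directions reduce to spectral calculus for the Laplacian $\Delta$ on $X$, using the heat semigroup $P_t = e^{t\Delta}$ and the Bakry--\'Emery estimate available in $\RCD(K,N)$ spaces.

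\textbf{Upper bound, $H^{\mathrm{osc}}(f) \lesssim \|\nabla f\|_{L^2}^2$.} For each ball $B=B(x,r)$ I construct an explicit competitor $h$. A single approximant $P_{r^2}f$ is not harmonic, so I instead let $h$ be the harmonic replacement of $P_{r^2} f$ in $2B$, with boundary values $P_{r^2}f$. Then $f-h = (f - P_{r^2}f) + (P_{r^2}f - h)$.

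For the first term, the $L^2$ bound $\|f-P_{r^2}f\|_2^2 \lesssim r^2\langle f,\, r^2\Delta^2 \cdots\rangle$ gives, after integrating $dr/r$, the familiar square-function estimate
\[
\int_0^\infty \|f-P_{r^2}f\|_2^2\, \frac{dr}{r^3} \sim \|\nabla f\|_2^2,
\]
which follows from the spectral theorem. For the second term, the energy minimisation (Caccioppoli/Poincar\'e on $2B$) gives
\[
\|P_{r^2}f - h\|_{L^2(2B)} \lesssim r^2 \|\Delta P_{r^2} f\|_{L^2(2B)}.
\]
Summing over a bounded-overlap cover at scale $r$, the resulting contribution is $\int r^4\|\Delta P_{r^2}f\|_2^2\, dr/r^3$, which is again $\lesssim \|\nabla f\|^2$ spectrally.

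The oscillation term is $g_h - \langle g_h\rangle$. I bound it by
\[
\| |\nabla h| - |\nabla P_{r^2}f| \|_{L^2(B)} + \| |\nabla P_{r^2}f| - c\|_{L^2(B)}.
\]
The first summand is controlled by the energy estimate above, via interior gradient bounds for $P_{r^2}f-h$ in terms of $r\|\Delta P_{r^2}f\|$. For the second, I use the local Poincar\'e inequality applied to $|\nabla P_{r^2}f|$, together with the Bochner inequality / self-improvement in $\RCD$:
\[
\big|\nabla |\nabla u|\big| \le |\myHess\, u|, \qquad \int |\myHess\, u|^2 \le \int (\Delta u)^2 - K\int |\nabla u|^2 .
\]
This gives $\dashint_B (|\nabla P_{r^2}f|-c)^2 \lesssim r^2 \dashint_{\lambda B} |\myHess P_{r^2}f|^2$. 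Integrating in $x$ and in $dr/r$ yields $\int r\, \|\myHess P_{r^2}f\|_2^2\, dr \lesssim \int r\,\|\Delta P_{r^2}f\|_2^2\,dr + \ldots \sim \|\nabla f\|_2^2$. Boundedness of $X$ when $K<0$, and the truncation at $\diam X$, absorb the curvature term and the large scales; this is where the $\|f-\langle f\rangle_X\|/\diam X$ term enters.

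\textbf{Lower bound, $\|\nabla f\|_2^2 \lesssim \|f-\langle f\rangle\|^2/\diam^2 + H^{\mathrm{osc}}(f)$.} The oscillation part is not needed here; $H_f$ alone suffices. I follow the Euclidean strategy through a discrete martingale/telescoping decomposition. Take dyadic cubes $Q$ of scale $2^{-k}$ (Christ cubes) and near-optimal harmonic $h_Q$. Set
\[
f_k = \sum_Q \varphi_Q\, h_Q
\]
with a Lipschitz partition of unity subordinate to the cubes. Caccioppoli gives $\|\nabla h_Q\|_{L^2(Q)} \lesssim r^{-1}\|h_Q - c\|$. Comparing neighbouring and parent approximants, $\|h_Q-h_{Q'}\|_{L^2(Q)}$ is bounded by the $H_f$ coefficients of the enclosing balls. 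Then
\[
\|\nabla(f_{k+1}-f_k)\|_2^2 \lesssim \sum_Q H_f(Q)^2 .
\]
To avoid a lossy triangle inequality, I need orthogonality of $\nabla$ across scales. The cleaner route is duality: the weak gradient $\nabla f$ is the $L^2$ limit of $\nabla f_k$, and I test it against $\nabla P_{t}\phi$ using harmonicity. The pairing $\int \nabla h_Q\cdot\nabla(\varphi_Q\psi)$ reduces to terms involving only $\nabla\varphi_Q$, which gives almost-orthogonality. Lower semicontinuity of Cheeger energy then gives $f\in N^{1,2}$.

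\textbf{Main obstacle.} I expect this almost-orthogonality/summation in the lower bound to be the main difficulty, since without affine structure there is no Fourier orthogonality. My first attempt would be to run the Carleson-type Schur argument on $\sum_k \nabla(f_{k+1}-f_k)$, exploiting that each increment is harmonic off the supports of $\nabla\varphi_Q$.
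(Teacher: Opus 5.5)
Your argument for $H^{\mathrm{osc}}(f)\lesssim\|\nabla f\|_{L^2}^2$ is correct and takes a genuinely different route from the paper. With $w=P_{r^2}f-h\in N^{1,2}_0(2B)$, harmonicity of $h$ gives $\int|\nabla w|^2\,d\mu=-\int \Delta P_{r^2}f\, w\,d\mu$. Hence $\|\nabla w\|\lesssim r\|\Delta P_{r^2}f\|_{L^2(2B)}$ and $\|w\|\lesssim r^2\|\Delta P_{r^2}f\|_{L^2(2B)}$. The resulting square functions are bounded by $\|\nabla f\|^2$ via the spectral theorem and the integrated Bochner inequality. When $K<0$, boundedness of $X$ is used only for the term $|K|\int_0^{\diam(X)}r\|\nabla P_{r^2}f\|^2\,dr$.

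The paper instead uses harmonic replacements on Christ--David cubes, energy orthogonality, a stopping-time decomposition and the one-third trick. That is longer but elementary (no spectral calculus) and local. It also controls the Hessian of the harmonic approximant itself, i.e.\ the stronger $\harrcd{f}$, which is what recovers Dorronsoro's theorem in $\R^n$ via Lemma \ref{l:RCD-Rn}. Your route controls $\harosc{f}$ through $\myHess(P_{r^2}f)$ instead. Three minor points:
\begin{itemize}
\item in this direction $\|f-\langle f\rangle_X\|/\diam(X)$ is simply bounded by Poincar\'e and absorbs nothing;
\item scales $r\gtrsim\diam(X)$, where the Dirichlet Sobolev inequality is unavailable, should be handled with constant competitors;
\item Lemma \ref{l:PI-RCD} must be extended from $\Test(X)$ to $P_{r^2}f$.
\end{itemize}

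The converse direction, however, has a genuine gap, exactly at the step you call the main obstacle. Write $A_k^2=\sum_{Q\in\cubes_k}\har{f}(6B_Q)^2\mu(Q)$ and $d_k=f_{k+1}-f_k$. Your bound $\|\nabla d_k\|^2\lesssim A_k^2+A_{k+1}^2$ only yields $\|\nabla f_M\|\le\|\nabla f_{k_0}\|+\sum_k\|\nabla d_k\|$. That is control by the $\ell^1$ sum $\sum_kA_k$, and no bound on the norms $\|\nabla d_k\|$ alone can do better. The duality paragraph does not supply the missing orthogonality, for two reasons. First, ``$\nabla f$ is the $L^2$ limit of $\nabla f_k$'' presupposes $f\in N^{1,2}(X)$ and the convergence of $\sum_k\nabla d_k$, which is precisely what must be proved. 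Second, the fact that pairing against a single $\varphi_Qh_Q$ only produces terms with $\nabla\varphi_Q$ gives nothing small: for instance $h_Q\nabla\varphi_Q$ has size $\cradial_k^{-1}|f|$. As written, the ``if'' direction is not established.

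What is missing is control of each $d_k$ in two norms of opposite homogeneity, and this comes from the Laplacian of the glued function. Since $\sum_R\varphi_R\equiv1$ and $h_Q$ is harmonic on $6B_Q$, on $Q\in\cubes_k$ one has
\[
\Delta f_k=\Delta(f_k-h_Q)=\sum_R\big[\Delta\varphi_R\,(h_R-h_Q)+2\langle\nabla\varphi_R,\nabla(h_R-h_Q)\rangle\big].
\]
Proposition \ref{l:cut-off} together with Caccioppoli (Lemma \ref{l:caccio-replacement}) for $h_R-h_Q$ then gives $\|\cradial_k\Delta f_k\|_{L^2}\lesssim A_k$; this is exactly the paper's estimate for $\Delta p_k$. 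Combine it with $\|f-f_k\|_{L^2}\lesssim\cradial_kA_k$ and set $B_k=A_k+A_{k+1}$. For $j\le k$ this gives $|\langle\nabla d_j,\nabla d_k\rangle|=|\langle\Delta d_j,d_k\rangle|\lesssim\rho^{k-j}B_jB_k$, and Schur's test yields $\|\sum_k\nabla d_k\|^2\lesssim\sum_kB_k^2$. The coarsest level is handled by $\|\nabla f_{k_0}\|^2=-\langle\Delta f_{k_0},f_{k_0}-\langle f\rangle_X\rangle$ with $\cradial_{k_0}\sim\diam(X)$, or by letting $k_0\to-\infty$ if $X$ is unbounded. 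This is where $\|f-\langle f\rangle_X\|/\diam(X)$ genuinely enters, and lower semicontinuity concludes. The paper feeds the same Laplacian estimate into a heat-semigroup argument instead (Lemmas \ref{l:heat-telescope} and \ref{l:6-1}, comparing $H_{2t^2}f$ with $p_k$). Without an estimate of this kind your lower bound does not close.
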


We actually prove a version of Theorem \ref{t:RCD} with $\harosc{f}$ replaced by another quantity $\harrcd{f}$ defined in terms of the Hessian on $\RCD$ spaces (see Definition \ref{d:H-RCD}). Theorem \ref{t:RCD} is an immediate consequence of this more general statement. We have stated Theorem \ref{t:RCD} in terms of $\harosc{f}$ since this makes sense in more general metric spaces, for example, in spaces supporting a Poincar\'e inequality. When $X = \R^n$, while it is not clear whether $\harosc{f}(x,r)$ is comparable to $\Omega_{f}(x,r)$, it does hold that that $\harrcd{f}(x,r) \sim \Omega_{f}(x,r)$ (Lemma \ref{l:RCD-Rn}). Thus, the proof presented here for Theorem \ref{t:RCD} gives a new proof of Theorem \ref{t:dor} in that setting. \\

It turns out that the oscillation term in \eqref{e:h-osc} is not necessary to characterise $N^{1,2}(X)$ in terms of harmonic approximations. Indeed, we define a further coefficient $\har{f}$ as in \eqref{e:h-osc} except without the oscillatory term (Definition \ref{d:H-no-osc}). Then, we show the following.

	\begin{thmx}\label{t:RCD-2}
	Let $(X,d,\mu)$ be an $\RCD(K,N)$ space with $K \in \R$ and $N \in (1,\infty)$, assume that $X$ is bounded in the case that $K < 0$, and let $f \in L^2(X).$ Then, $f \in N^{1,2}(X)$ if and only if 
	\begin{align}\label{e:RCD-2}
		H(f) \coloneqq \int_0^{\diam(X)} \int_X \har{f}(x,r)^2 \, \frac{dxdr}{r} < \infty. 
	\end{align}
	In this case, we have 
	\begin{align}
		\|\nabla f\|_{L^2(X)}^2 \sim \left\| \frac{f - \langle f \rangle_X}{\diam(X)} \right\|_{L^2(X)}^2  + H(f),
	\end{align}
	where the implicit constants depend only on $N$ in the case that $K \geq 0$, and only on $N,K$ and $\diam(X)$ in the case that $K < 0$. Here, $\nabla$ is the canonical choice of gradient operator on $X$, see Section \ref{s:prelims}.
\end{thmx}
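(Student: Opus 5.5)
We prove the two inequalities separately, working with the coefficient $\har{f}(x,r)^2=\inf_h \dashint_{B(x,r)}\big((f-h)/r\big)^2\,d\mu$, the infimum over $h$ harmonic in $B(x,r)$. Since $\har{f}\le\harosc{f}$ pointwise, the sufficiency direction ($H(f)<\infty \Rightarrow f\in N^{1,2}$ with the stated bound) is the stronger statement. The necessity direction is a new estimate, because the oscillation term is dropped.

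\emph{Upper bound $H(f)\lesssim \|\nabla f\|_{L^2}^2$.} Fix $f\in N^{1,2}(X)$ and a ball $B=B(x,r)$. We take $h$ to be the harmonic replacement of $f$ in $B$ (or in $2B$), i.e.\ the Dirichlet-energy minimiser with $h-f\in N^{1,2}_0$. Poincar\'e/Sobolev for functions vanishing on the boundary gives
\[
\dashint_B \Big(\frac{f-h}{r}\Big)^2 \lesssim \dashint_B |\nabla (f-h)|^2,
\]
and by the minimising property $\int_B|\nabla(f-h)|^2 = \int_B |\nabla f|^2-\int_B|\nabla h|^2$. The aim is to show that $\int_0^{\diam X}\int_X \dashint_{B(x,r)}|\nabla(f-h_{x,r})|^2\,\frac{d\mu\,dr}{r}$ is bounded. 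A naive estimate only gives $\|\nabla f\|^2\int dr/r$, which diverges, so we need a Littlewood--Paley type square-function estimate. Our plan is to compare with the heat flow. We set $f_r=P_{r^2}f$ and note that $\Delta f_r$ satisfies the $L^2$ square-function bound
\[
\int_0^\infty \|r\,\Delta P_{r^2} f\|_2^2\,\frac{dr}{r}\lesssim \|\nabla f\|_2^2,
\]
by spectral calculus. Next we solve $\Delta u=\Delta f_r$ in $B$ with zero boundary data. Then $f_r-u$ is harmonic in $B$, and $\dashint_B (u/r)^2\lesssim r^2\dashint_B|\Delta f_r|^2$, by the Sobolev inequality on $B$ together with doubling. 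The remaining term $\dashint_B ((f-f_r)/r)^2$ integrates to $\lesssim\|\nabla f\|_2^2$, since
\[
\int_0^\infty \|(I-P_{r^2})f\|_2^2\,\frac{dr}{r^3}=\int\!\!\int_0^\infty \frac{(1-e^{-r^2\lambda})^2}{r^3}\,dr\,dE_\lambda(f,f) \sim \int \lambda\,dE_\lambda(f,f),
\]
and averaging over balls and integrating in $x$ is harmless by doubling, i.e.\ $\int_X\dashint_{B(x,r)}g\,d\mu(y)\,d\mu(x)\lesssim \int g$. For $K<0$ (or bounded $X$) the scales are truncated at $\diam X$, so the constants also depend on $K$ and $\diam X$.

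\emph{Lower bound $\|\nabla f\|^2\lesssim \|(f-\langle f\rangle)/\diam X\|^2+H(f)$.} We use duality with the heat flow, exploiting that harmonic functions are annihilated by $\Delta$. We write
\[
\|\nabla f\|^2 \approx \int_0^{\diam}\|r\Delta P_{r^2}f\|^2\frac{dr}{r} + \text{(low-frequency term)},
\]
where the low-frequency part is controlled by $\|(f-\langle f\rangle)/\diam X\|^2$ via the spectral gap or the Poincar\'e inequality. For the main term, we use that for each $x$ the kernel $y\mapsto r^2\Delta_y p_{r^2}(x,y)$ is localised at scale $r$ with Gaussian decay. We want to subtract from $f$ an approximating harmonic $h_{x,Cr}$, so that $r^2\Delta P_{r^2}f(x)$ is controlled by $\dashint_{B(x,Cr)}|f-h|/r^2$ plus tails. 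The difficulty is that $h$ is harmonic only in $B(x,Cr)$, so $\int \Delta_y p(x,y)h(y)\,d\mu(y)$ does not vanish. To fix this we use a cutoff $\eta$ with $\int \Delta(\eta\, p)\,h=\int p\,(2\langle\nabla\eta,\nabla h\rangle+h\Delta\eta)$. This is supported in the annulus, where Gaussian decay makes it small once Caccioppoli is applied to $h$ (with $h$ normalised by subtracting $\langle f\rangle_{B}$). The tails then sum over dyadic scales $2^kr$ with factors $e^{-c4^k}$ and give a discrete Schur-type bound. Finally Cauchy--Schwarz and Fubini yield
\[
\int\|r\Delta P_{r^2}f\|^2\frac{dr}{r}\lesssim H(f)+\text{tail},
\]
where the tail is absorbed by the low-frequency term. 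We expect this step to be the main obstacle: it needs sharp Laplacian/heat-kernel bounds on $\RCD(K,N)$ spaces (Gaussian bounds and $|\nabla p|$, $|\Delta p|$ estimates due to Jiang--Li--Zhang) and careful handling of the non-global harmonicity. The conclusion $f\in N^{1,2}$ follows because finiteness of the square function identifies $f$ with an element of the form domain, which coincides with $N^{1,2}$ on $\RCD$ spaces.
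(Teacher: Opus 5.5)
Your necessity half is correct, and it takes a different route from the paper. The competitor $P_{r^2}f-u$, with $u\in N^{1,2}_0(B)$ solving $\Delta u=\Delta P_{r^2}f$ in $B$, is harmonic in $B$. Combined with the spectral identities for $\int_0^\infty\|(I-P_{r^2})f\|^2\,dr/r^3$ and $\int_0^\infty\|r\Delta P_{r^2}f\|^2\,dr/r$, it gives $H(f)\lesssim\|\nabla f\|_{L^2(X)}^2$ in a few lines. The paper instead (Section \ref{s:PI}, via Theorem \ref{t:PI}) uses harmonic replacements on Christ--David cubes, energy orthogonality with a telescoping sum, and the one-third trick. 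Your argument is shorter, but it is global and relies on the self-adjoint Laplacian. The paper's argument is local and works for an arbitrary Cheeger derivative on doubling spaces with a Poincar\'e inequality. One small omission: Lemma \ref{l:sobolev-poincare} needs $r<\diam(X)/3$. For $r\in[\diam(X)/3,\diam(X))$ use the trivial bound $\har{f}(B)^2\mu(B)\lesssim\|\nabla f\|_{L^2(2B)}^2$ instead.

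The sufficiency half has a genuine gap, exactly at the step you flag. Take $h$ harmonic on $B(x,4Cr)$ and $\eta\equiv1$ on $B(x,Cr)$ with $\supp\eta\subset B(x,2Cr)$. Gaussian decay on the annulus and Caccioppoli bound the cutoff term $r\int p_{r^2}(x,y)\big((h-c)\Delta\eta+2\langle\nabla\eta,\nabla h\rangle\big)(y)\,d\mu(y)$ only by $e^{-c'C^2}r^{-1}\big(\dashint_{B(x,4Cr)}|h-c|^2\,d\mu\big)^{1/2}$. Split $h-c=(h-f)+(f-c)$. The first piece gives an $H$-coefficient, but the second leaves $e^{-c'C^2}r^{-1}\big(\dashint_{B(x,4Cr)}|f-\langle f\rangle_{B(x,4Cr)}|^2\,d\mu\big)^{1/2}$. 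No choice of $c$ does better, because $h$ oscillates on the ball as much as $f$ does, up to an $H$-coefficient.

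This is a gradient-sized quantity, not an $H$-coefficient. For smooth $f$ on $\R^n$ it tends to a multiple of $|\nabla f(x)|$ as $r\to0$, so its square integrated against $d\mu\,dr/r$ diverges logarithmically. It is present at every scale, not only at $r\sim\diam(X)$, so it can neither be absorbed by the low-frequency term nor into the left-hand side. The same problem occurs on the region $d(x,y)>2Cr$, where nothing has been subtracted from $f-c$: your dyadic Schur sum bounds it by oscillations of $f$ at scales $2^kr$, not by $\har{f}(x,2^kr)$. On $\R^n$ the cutoff term actually vanishes for $c=h(x)$, by radial symmetry of the kernel and the mean value property. No such cancellation is available on an $\RCD(K,N)$ space.

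The missing idea is to glue the local approximants. Then the defect from non-global harmonicity is measured by the \emph{mismatch between overlapping approximants}, not by the size of a single one. The paper does this with $p_k=\sum_{Q\in\cubes_k}\theta_Qh_Q$. On $Q\in\cubes_k$ one has $\Delta p_k=\sum_R\big[\Delta\theta_R\,(h_Q-h_R)+2\langle\nabla\theta_R,\nabla(h_Q-h_R)\rangle\big]$, and $h_Q-h_R$ is controlled by $\har{f}(6B_Q)+\har{f}(6B_R)$ via the triangle inequality and Caccioppoli. With this, no heat-kernel bounds are needed at all. For $r\in(\cradial_{k+1},\cradial_k)$,
\[
\|r\Delta P_{r^2}f\|_{L^2}\le\|r\Delta P_{r^2}(f-p_k)\|_{L^2}+\|rP_{r^2}\Delta p_k\|_{L^2}\le e^{-1}\|(f-p_k)/r\|_{L^2}+r\|\Delta p_k\|_{L^2}.
\]
Your identity $\int_0^\infty\|r\Delta P_{r^2}f\|^2\,dr/r=\tfrac14\|\nabla f\|^2$ holds in $[0,\infty]$ for every $f\in L^2$. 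Together with the low-frequency bound, it then finishes the proof. The paper reaches the same two quantities differently: it applies the Hyt\"onen--Naor telescoping (Lemma \ref{l:heat-telescope}) to $\|(P_{2t^2}f-f)/t\|$ and uses $\|P_{2t^2}p_k-p_k\|\le 2t^2\|\Delta p_k\|$.

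A radial version of the same gluing would also repair your kernel argument. Use a partition of unity subordinate to dyadic annuli around $x$, with approximants $h_j$ harmonic on $B(x,2^{j+2}Cr)$. The commutator terms then involve only $h_j-h_{j+1}$, which are controlled by $H$-coefficients.
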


For any $f \in N^{1,2}(X)$, condition \eqref{e:RCD-2} is immediate from Theorem \ref{t:RCD}. Thus, the main point of Theorem \ref{t:RCD-2} is that \eqref{e:RCD-2} is enough to guarantee $f \in N^{1,2}(X).$ Combining both Theorem \ref{t:RCD} and Theorem \ref{t:RCD-2}, one sees that having good harmonic approximations as in \eqref{e:RCD-2} self-improves to having good harmonic approximations with controlled oscillation as in \eqref{e:RCD}. In the Euclidean case, \eqref{e:RCD-2} self-improves to having good affine approximations in the sense of \eqref{e:dor}. The fact that \eqref{e:RCD-2} implies $f \in N^{1,2}(X)$ is new even in the Euclidean case. To summarise, whenever the coefficients make sense, Theorems \ref{t:dor}, \ref{t:RCD} and \ref{t:RCD-2} imply 
\begin{align}
	\Omega(f) \sim H^\text{osc}(f) \sim H(f). 
\end{align}

It is natural here to compare Theorem \ref{t:RCD-2} with \cite[Theorem 1.1]{alabern2012new} (in the case $p=2$). There, the authors prove a Dorronsoro-type estimate for functions $f \colon \R^n \to \R$ in terms of a square function which quantifies the size of $|f(x) - \langle f \rangle_{B(x,r)}|$ across all scales and locations. Heuristically, this is measuring how far $f$ is from a harmonic function by measuring to what extent is satisfies the mean value property. \\

Finally, along with our main results concerning $\RCD$ spaces, we prove a quantitative differentiability result for Sobolev functions on doubling metric measure spaces supporting a Poincar\'e inequality. By \cite{cheeger1999differentiability}, one can assign for each $f \in N^{1,2}(X)$ a Cheeger derivative $Df$. This differential structure comes with a canonical Cheeger energy and a notion of Cheeger harmonic functions with respect to this energy. Thus, one can define a coefficient $\har{f,D}$ which quantifies how far $f$ is from a Cheeger harmonic function. The coefficient $\har{f}$ from above is in fact just $\har{\nabla , f}$ with $\nabla$ the canonical choice of gradient operator on $\RCD$ spaces. In that case, it turns out that the space of Cheeger harmonic functions with respect to $\nabla$ is exactly the space of harmonic functions considered in \cite{bjorn2011nonlinear}. 

Our result in this setting is the following. We have stated a local version which implies the corresponding global estimate (like this stated in Theorem \ref{t:RCD} and Theorem \ref{t:RCD-2}) after taking $R \to \diam(X)$.

\begin{thmx}\label{t:PI}
	Let $(X,d,\mu)$ be a doubling metric measure space which supports a weak $(1,2)$-Poincar\'e inequality, let $D$ be a Cheeger derivative on $X$ and let $f \in N^{1,2}(X)$. Then, for each ball $B = B(z,R)$,  
	\begin{align}\label{e:theorem-PI}
		\hspace{1.5em}\left\| \frac{f - \langle f \rangle_{B}}{R} \right\|_{L^2(B)}^2 + \int_0^{R} \int_{B} \har{f,D}(x,r)^2 \, \frac{d\mu dr}{r} \lesssim \| Df \|_{L^2(AB)}^2,
	\end{align}
	where $A \geq 1$ is a constant multiple of the dilation constant in the weak Poincar\'e inequality and the implicit constant depends only on the doubling constant and the constants appearing in the Poincar\'e inequality. 
\end{thmx}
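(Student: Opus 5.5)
The plan has two parts. The first term is just the Poincar\'e inequality, so the work is in the square-function term, which I will control by harmonic replacement ball by ball followed by a Caccioppoli-type ``energy-gap'' sum over scales.

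\textbf{First term.} The weak $(1,2)$-Poincar\'e inequality, with $g_f \lesssim |Df|$ (Cheeger's comparability of the upper gradient and the Cheeger derivative), gives directly
\begin{align}
\left\| \frac{f-\langle f\rangle_B}{R}\right\|_{L^2(B)}^2 \lesssim \|Df\|_{L^2(\lambda B)}^2 .
\end{align}

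\textbf{Harmonic replacement on each ball.} For fixed $(x,r)$ with $x\in B$ and $0<r<R$, take $h=h_{x,r}$ to be the Cheeger-harmonic replacement of $f$ on $B(x,r)$. That is, $h$ minimises $\int_{B(x,r)}|Dh|^2\,d\mu$ among functions with $h-f\in N^{1,2}_0(B(x,r))$. Using this $h$ as a competitor in $\har{f,D}(x,r)$, the Sobolev--Poincar\'e inequality for functions vanishing outside the ball (available under doubling plus Poincar\'e) gives
\begin{align}
\har{f,D}(x,r)^2 \le \dashint_{B(x,r)}\Big(\frac{f-h}{r}\Big)^2 d\mu \lesssim \dashint_{B(x,r)} |D(f-h)|^2\, d\mu .
\end{align}
Because $|D\cdot|^2$ comes from an inner product on each measurable chart (the Cheeger energy is quadratic), the minimising property yields the Pythagorean identity
\begin{align}
\int_{B(x,r)}|D(f-h)|^2 = \int_{B(x,r)}|Df|^2 - \int_{B(x,r)}|Dh|^2 .
\end{align}
If the energy is not exactly quadratic, I would first replace it by Cheeger's comparable inner-product norm, which preserves the notion of harmonicity up to the choice of $D$. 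So the coefficient is bounded by the energy defect $E_f(x,r)-E_h(x,r)$.

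\textbf{Summing over scales (main obstacle).} The defect at a single scale is only bounded by $\|Df\|^2$ on the ball, and summing that trivially over dyadic scales diverges. The defects therefore have to be made to telescope or be controlled by an orthogonality argument.

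My first attempt is a discrete Littlewood--Paley-type argument. Fix a dyadic cube system (Christ cubes) and, for each level $k$, a partition of unity subordinate to balls of radius $\sim 2^{-k}R$. Let $P_k f$ be the function obtained by gluing the harmonic replacements at level $k$. Compare the replacements at consecutive levels: $\|D(P_k f - P_{k+1} f)\|^2$ should control the level-$k$ defects. The orthogonality $\langle D(f-h), Dh\rangle = 0$ on each ball then lets the sum over $k$ of these differences be bounded by $\|Df\|_{L^2(AB)}^2$, in the manner of an almost-orthogonality (Cotlar) estimate.

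Concretely, I would aim to prove
\begin{align}
\sum_k \sum_{Q\in\calD_k} \mu(Q)\, \har{f,D}(x_Q,2^{-k})^2 \lesssim \|Df\|^2_{L^2(AB)}
\end{align}
in two steps:
\begin{enumerate}
\item[(i)] Use interior $L^2$ decay of harmonic functions (Caccioppoli plus Moser/H\"older regularity) to get geometric decay of $E_{f-h}$ when $h$ is compared across nested scales.
\item[(ii)] Use a Schur test on the resulting off-diagonal matrix indexed by scales.
\end{enumerate}

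If this gets stuck, the fallback is a duality/$T(1)$-style argument. Write $f-h_{x,r}$ through the Green operator of the ball applied to the ``Laplacian'' of $f$, and use a Carleson-type square-function bound for the family of local solution operators. This holds because each such operator annihilates harmonic functions and has $L^2$ norm bounded uniformly in $r$.

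Finally, comparable balls and doubling convert the dyadic sum back to the continuous integral $\int_0^R\int_B \cdot\,\frac{d\mu\,dr}{r}$. Tracking enlargements gives the constant $A$ as a multiple of the Poincar\'e dilation $\lambda$.
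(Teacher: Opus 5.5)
There is a genuine gap at the step you yourself call the main obstacle. Your first term and your single-scale analysis are correct: harmonic replacement, Sobolev--Poincar\'e for $N^{1,2}_0$, and the Pythagorean identity. But the mechanisms you sketch for summing over scales would not work.

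First, bounding each coefficient by the energy defect $\int_{B(x,r)}|Df|^2-|Dh_{x,r}|^2$ is a lossy reduction that cannot be summed. These defects all compare $f$ with its own replacement, so they do not telescope. They are also genuinely too large at coarse scales. For a wave packet of frequency $|\xi|$ in $\R^n$ and $r\gg|\xi|^{-1}$, the defect density is $\approx|\xi|^2$ while the coefficient is $\approx r^{-2}$. Hence any sum of defects over scales exceeds $\|Df\|^2$ by a logarithm.

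Second, gluing by a partition of unity, $P_kf=\sum\theta_Qh_Q$, destroys what orthogonality needs. $P_kf$ is harmonic on no cell, and $P_{k+1}f-P_kf$ has no zero boundary values. Moreover $DP_kf$ contains $\sum_Q(h_Q-h_R)D\theta_Q$, which is of the size of the very coefficients you are estimating. So there is no identity $\|D(P_{k+1}f-P_kf)\|^2=\|DP_{k+1}f\|^2-\|DP_kf\|^2$, only one with circular error terms, and a Cotlar argument has nothing to act on.

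Step (i) is also not where the geometric factor comes from. The Green-operator fallback only gives uniform $L^2$ bounds for each local solution operator, which do not yield a square-function estimate without quasi-orthogonality across scales — exactly what is missing.

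The missing idea is to do the Dirichlet replacement on the disjoint, nested Christ--David cubes themselves. Write $h^Q=f+\phi^Q$ with $\phi^Q\in N^{1,2}_0(Q)$, and set $f_k=f+\sum_{Q\in\cubes_k(Q_0)}\phi^Q$. By disjointness, $f_k\in N^{1,2}(X)$ (Lemma \ref{l:stitch-sobolev}) and $f_k$ is Cheeger harmonic on every $Q\in\cubes_k$. By nestedness, $f_{k+1}-f_k\in N^{1,2}_0(Q)$ for $Q\in\cubes_k$. Sobolev--Poincar\'e and orthogonality then give the exact telescoping bound
\begin{align}
\Big\|\frac{f_{k+1}-f_k}{\cradial_k}\Big\|_{L^2(Q)}^2\lesssim \|Df_{k+1}\|_{L^2(Q)}^2-\|Df_k\|_{L^2(Q)}^2 .
\end{align}
The coefficients enter through $a_k=\|(f-f_k)/\cradial_k\|_{L^2(Q_0)}$, not through energy defects. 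Let $b_k$ denote the left-hand side above, summed over $Q$. The triangle inequality and the scale ratio $\cradial_{k+1}/\cradial_k=\rho$ give $a_k\le\rho\,a_{k+1}+b_k$. A discrete Hardy/Schur step then yields $\sum_{k\le M}a_k^2\lesssim a_M^2+\sum_{k<M}b_k^2$ (Lemmas \ref{l:conseq} and \ref{l:M-conseq}); this is what your step (ii) should be. Sobolev--Poincar\'e is used on a defect only once, at the terminal scale: $a_M^2\lesssim\|Df\|^2-\|Df_M\|^2$. Everything then telescopes to $\|Df\|_{L^2(Q_0)}^2$ (Lemma \ref{l:sum-cube}).

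Finally, harmonicity on a cube does not give harmonicity on a ball around it. The paper therefore passes from $\harD(Q)$ to $\harD(3B_Q)$ using finitely many adjacent dyadic systems (Corollary \ref{c:C-to-1-update}) and Lemma \ref{l:bounded-and-semicont}. Only then does it convert to the continuous integral via Lemma \ref{l:continuous-to-discrete}.
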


\addtocontents{toc}{\SkipTocEntry}
\subsection*{Additional remarks and further questions} 
Besides those results mentioned in the previous section, quantitative differentiation has been developed in a number of other non-Euclidean settings, see \cite{azzam2014quantitative, hofmann1995characterization,hytonen2019heat}.  While still Euclidean, we also mention the main result of \cite{azzam2023quantitative}, which is a version of Theorem \ref{t:dor} for functions defined on uniformly rectifiable subsets of $\R^n.$ 

The phrase \textit{Dorronsoro's Theorem} is often associated with one of the inequalities in Theorem \ref{t:RCD}, namely, $H^{\text{osc}}(f) \lesssim \|\nabla f\|^2.$ There are now a number of different proofs of this inequality, each relying in some way on Euclidean structure or complex techniques. For example, the original proof due to Dorronsoro and the much shorter proof due Azzam \cite{azzam2016bi} rely critically on the Fourier transform; the proof due to Hyt\"onen and Naor \cite{hytonen2019heat} relies on heat flow techniques; the starting point to Orponen's proof \cite{orponen2021integral} is a difficult theorem of Jones \cite{jones1990rectifiable}. The proof here relies only on basic properties of harmonic functions and basic inequalities related to them (e.g., Caccioppoli), and is inspired by the methods presented in \cite{cheeger2012quantitative}. As far as the author is aware, the converse inequality is found only in the original paper of Dorronsoro. Here, we adapt some of the heat flow techniques from \cite{hytonen2019heat} to given an alternative proof of this inequality which works in $\RCD$ spaces.

Our main result characterises $N^{1,2}(X)$ only, while Dorronsoro's original paper characterises (among other things) all $N^{1,p}(X)$ with $1 < p < \infty.$ The most obvious question is whether a version of Theorem \ref{t:RCD} can be shown for $p \neq 2.$ Finally, every bounded Ahlfors regular $\RCD(K,N)$ space supports a weak Poincar\'e inequality and is uniformly rectifiable (see \cite{hyde2024ricci} for this last point). It is natural to ask whether Theorem \ref{t:RCD} can be shown for $X$ supporting a weak Poincar\'e inequality or being uniformly rectifiable. 

\addtocontents{toc}{\SkipTocEntry}
\subsection*{Outline} 
In Section \ref{s:prelims}, we give the necessary preliminaries on Sobolev spaces, Poincar\'e inequalities and RCD spaces. The reader familiar with these concepts may wish to skip this section and refer to it only when referenced in later sections. In Section \ref{s:coef} we define more precisely various $H$-coefficients and prove some fundamental results concerning them. In Section \ref{s:PI}, we prove Theorem \ref{t:PI}. This also takes care of the \textit{only if} statement in Theorem \ref{t:RCD-2} (see Lemma \ref{l:C-implies-B}). In Section \ref{s:RCD} we prove the \textit{only if} statement in Theorem \ref{t:RCD}. In Section \ref{s:if-statement}, we prove the \textit{if} statement in Theorem \ref{t:RCD-2}. As noted above, this also takes care of the \textit{if} statement in Theorem \ref{t:RCD}. 

\addtocontents{toc}{\SkipTocEntry}
\subsection*{Acknowledgments} 
We would like the thank Ivan Violo for a number of helpful comments regarding the exposition and for catching several typos.

\section{Preliminaries}\label{s:prelims}
Throughout the paper, $C$ will denote some absolute constant which may change from line to line. If there exists $C \geq 1$ such that $a \leq Cb,$ then we will write $a \lesssim b.$ If $C$ depends on a parameter $t$, we will write $a \lesssim_t b.$ We will write $a \sim b$ if $a \lesssim b$ and $b\lesssim a.$ We define $a \sim_t b$ similarly.  \\

For a metric space $X = (X,d)$, we denote by $B(x,r)$ the open ball \textit{centred} at $x$ with \textit{radius} $r.$ Thus, 
\begin{align}
	B(x,r) = \{y \in X \colon d(x,y) < r\}.
\end{align}
If $B \subseteq X$ is a ball in $X,$ we may write $x_B$ and $r_B$ to denote a point and radius, respectively, such that $B = B(x_B,r_B).$  Note, in general, a ball does not uniquely determine a centre and radius. We denote the diameter of a set $E \subseteq X$ by $\diam(E)$ i.e., 
\begin{align*}
	\text{diam}(E) &= \sup\{d(x,y) : x,y \in E \}. 
\end{align*} 
Let $\mbox{LIP}(X)$ denote the space of Lipschitz functions on $X$. For an open set $\Omega \subset X$, let $\text{LIP}_{\rm{loc}}(\Omega)$ denote the space of locally Lipschitz functions in $\Omega$ and $\rm{LIP}_c(\Omega)$ the space of Lipschitz functions with compact support in $\Omega$.  \\ 

A \textit{metric measure space} is a triple $(X,d,\mu)$, where $(X,d)$ is a separable metric space and $\mu$ is a non-trivial locally finite Borel-regular measure on $X$. A metric measure space is said to be \textit{doubling} if there exists a constant $\Cdoub \geq 1$ such that 
\begin{align}
	0 < \mu(B) \leq \Cdoub\mu(2B) < \infty 
\end{align}
for all ball $B$ in $X$. We call $\Cdoub$ the \textit{doubling constant}.\\

\addtocontents{toc}{\SkipTocEntry}
\subsection*{Christ-David cubes} At various points in the paper, it will be convenient for us to work with a version of ``dyadic cubes'' tailored to $X$. These are the so-called \textit{Christ-David cubes}, which were first introduced by David \cite{david1988morceaux} and generalized in \cite{christ1990b}. 

\begin{thm}\label{cubes}
	There exist constants $c_0, \rho \in (0,1)$ such that the following holds. Suppose $(X,d,\mu)$ is doubling metric measure space and $X_k$ be a sequence of maximal $\rho^k$-separated nets in $X$. There exists a family of open set $\cubes = \bigcup_{k \in \Z} \cubes_k$ in $X$ such that:
	\begin{enumerate}
		\item For each $k \in \Z,$ $\mu(X \setminus \bigcup_{Q \in \cubes_k} Q) = 0.$ 
		\item If $Q_1,Q_2 \in \cubes = \bigcup_{k}\cubes_k$ and $Q_1 \cap Q_2 \not= \emptyset,$ then $Q_1 \subseteq Q_2$ or $Q_2 \subseteq Q_1.$ 
		\item For $Q \in \cubes,$ let $k(Q)$ be the unique integer so that $Q \in \cubes_k$ and set $\ell(Q) = 5\rho^k.$ Then there is $x_Q \in X_k$ such that
		\begin{align*}
			B(x_Q,c_0\ell(Q)) \subseteq Q \subseteq B(x_Q , \ell(Q)) \eqqcolon B_Q. 
		\end{align*}
	\end{enumerate}
\end{thm}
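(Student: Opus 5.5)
This theorem is the Christ--David dyadic cube construction. I would follow Christ's argument, which uses only the doubling property of $\mu$. Since the paper quotes it as a known result, I will describe the construction rather than rederive every constant.

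\textbf{Step 1: a tree on the nets.} Fix $\rho$ small, say $\rho < 1/1000$. For each $k$ and each $x \in X_{k+1}$, choose a parent $y \in X_k$ with $d(x,y) < \rho^k$. Such a $y$ exists by maximality of the net $X_k$. When several choices are possible, fix them by a consistent rule (for example, take the nearest point, breaking ties by a well-ordering). This makes $\bigcup_k X_k \times\{k\}$ a forest, and following parents defines an ancestor relation $\preceq$.

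\textbf{Step 2: the preliminary cubes.} For $y \in X_k$, let $D(y)$ be the set of all descendants $x \in X_j$, $j \geq k$, of $y$. Define the cube $Q_y$ as the interior of the closure of $D(y)$, or, as in Christ, the union of small balls $B(x, c\rho^j)$ over descendants $x \in X_j$.

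\textbf{Step 3: verifying the properties.}
\begin{itemize}
\item Nestedness, property (2), follows from the tree structure. Two cubes either have one node descending from the other, or have disjoint descendant sets. In the latter case, disjointness of the cubes requires the thin-boundary/separation estimate below.
\item The outer containment $Q_y \subseteq B(y,\ell(Q))$ comes from a geometric series. A descendant at generation $j$ lies within $\sum_{i\ge k}\rho^i < 2\rho^k \le 5\rho^k$ of $y$.
\item The inner ball $B(y, c_0\ell(Q)) \subseteq Q_y$ uses the opposite bound. A point $z$ with $d(z,y) < c_0\rho^k$ has, at each finer generation, a nearest net point. Its chain of ancestors must reach $y$, since any other generation-$k$ point is $\rho^k$-separated from $y$ and smallness of $\rho$ forces the chain to stay near $y$. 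This requires choosing parents by the nearest-point rule and $c_0$ small compared with $1$.
\end{itemize}

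\textbf{Step 4: full measure, property (1), the main obstacle.} For each $k$, the cubes in $\cubes_k$ must cover $X$ up to a null set. The points not covered by any cube lie near boundaries between cubes. Following Christ, show that for $t$ small, the $t\rho^k$-neighbourhood of $\partial Q$ has measure at most $C t^{\eta}\mu(Q)$. The argument iterates over generations. At each finer scale, a fixed proportion (by doubling) of the boundary layer is absorbed into the interior of some cube. So the mass of the boundary layer decays geometrically, and the set of points belonging to boundaries at every scale has measure zero. This is where the doubling constant enters, via $\mu(B(x,r)) \le C\mu(B(x,c r))$. I expect this step to be the hardest.

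Nets, existence of parents and containments are routine; only the small-boundary estimate requires real work. Since the paper simply cites \cite{david1988morceaux,christ1990b}, I would defer to those works for the detailed constants.
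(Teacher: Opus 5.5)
The paper gives no proof of this theorem and simply cites David and Christ, so following Christ's construction is the right plan. However, your Step 3 leaves property (2) unproved and points to a tool that cannot prove it. You defer disjointness of same-level cubes to ``the thin-boundary/separation estimate below'', but Step 4 contains only a measure estimate, and no measure bound can give set-theoretic disjointness. Disjointness of the descendant index sets is not enough either, since both of your definitions fatten or close up those sets.

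The missing ingredient is a lemma that follows from the nearest-parent rule, taking $\rho<1/8$. Let $(w,j)$ and $(v,i)$ be nodes with $i>j$ and $d(v,w)<\tfrac14\rho^j$. The level-$(j+1)$ ancestor $v'$ of $(v,i)$ satisfies $d(v',v)<2\rho^{j+1}$, so $d(v',w)<\tfrac14\rho^j+2\rho^{j+1}<\tfrac12\rho^j$. By $\rho^j$-separation, $w$ is then the unique nearest point of $X_j$ to $v'$, hence its parent, so $(v,i)$ descends from $(w,j)$. Now take Christ's cubes $Q_y=\bigcup B(x,c\rho^j)$ with $c\le 1/8$. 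If $B(w,c\rho^j)$ meets $B(v,c\rho^i)$ with $j\le i$, then either $i=j$ and $v=w$, or $(v,i)$ descends from $(w,j)$. So balls coming from different level-$k$ trees never meet, level-$k$ cubes are pairwise disjoint, and (2) follows from the tree structure. For your interior-of-closure variant, the same lemma applied in both directions shows that $B(w,\tfrac14\rho^j)$ contains no descendant of any other level-$k$ node, which is what makes the interiors disjoint. This is essentially the computation you sketched for the inner ball, but it has to be carried out for (2).

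You have also placed the main difficulty in the wrong step. Property (1) does not need the small-boundary estimate $Ct^\eta\mu(Q)$; that is an additional conclusion of Christ's theorem and is not part of this statement. Given $x\in X$ and $j\ge k$, pick $w\in X_j$ with $d(x,w)<\rho^j$. Then $B(w,c\rho^j)\subseteq B(x,2\rho^j)$, and this ball lies in the level-$k$ cube of the level-$k$ ancestor of $(w,j)$. By doubling, $\mu(B(w,c\rho^j))\ge c'\mu(B(x,2\rho^j))$ with $c'>0$ depending only on $c$ and the doubling constant. Hence the uncovered set $E$ satisfies $\mu(E\cap B(x,2\rho^j))\le(1-c')\mu(B(x,2\rho^j))$ for every $x$ and every $j\ge k$. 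So no point is a density point of $E$, and the Lebesgue differentiation theorem gives $\mu(E)=0$. With the union-of-balls definition, the inner ball in (3) is immediate: it is the $j=k$ ball $B(y,c\rho^k)$, so $c_0=c/5$. The outer ball comes from your geometric series. The disjointness lemma above is therefore the only non-trivial step in the statement.
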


For the rest of the paper, we will fix constants $(c_0,\rho) \in (0,1)$ such that Theorem \ref{cubes} holds. We call a collection of open sets satisfying the conclusion of Theorem \ref{cubes} a \textit{system of Christ-David cubes}. Give a system of Christ-David cubes $\cubes$ and $Q \in \cubes$, we will let $\cubes(Q)$ and $\cubes_k(Q)$ denote the \textit{descendents of $Q$} and the \textit{descents of $Q$ at level $k$}, respectively. That is,  
\begin{align}
	\cubes(Q) = \{R \in \cubes \colon R \subseteq Q\} \quad \mbox{ and } \quad \cubes_k(Q) = \cubes(Q) \cap \cubes_k.
\end{align}

A key fact that we will use throughout the paper is the following. The proof is easy and so we will omit it. 

\begin{lem}\label{l:bounded-overlap}
	Suppose $(X,d,\mu)$ is a doubling metric measure space and $\cubes$ is a system of Christ-David cubes on $X$. For each $A \geq 1$ there is $C = C(A,\Cdoub)$ such that 
	\begin{align}
		\# \{R \in \cubes_k \colon AB_R \cap AB_Q \neq\emptyset \} \leq C
	\end{align}
	for all $Q \in \cubes_k$ and $k \in \Z$. 
\end{lem}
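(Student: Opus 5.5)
The argument is a packing count based on the doubling property; the key object is the single ball $(A+1)B_Q$.

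Fix $k\in\Z$ and $Q\in\cubes_k$. By Theorem \ref{cubes}(3), every $R\in\cubes_k$ has $B_R=B(x_R,5\rho^k)$ with $x_R\in X_k$, so all balls $B_R$, $R \in \cubes_k$, have the same radius $r:=5\rho^k$.

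First observe that the map $R\mapsto x_R$ is injective on $\cubes_k$. Since $\rho<1$, we have $c_0\ell(R)\geq c_0 5\rho^k>0$. The cubes in $\cubes_k$ are pairwise disjoint: if two of them intersected, then by Theorem \ref{cubes}(2) one would contain the other, and two distinct cubes of the same generation are disjoint (this is part of the standard construction; alternatively, one can count distinct centres directly). Each cube $R$ contains the nonempty ball $B(x_R,c_0\ell(R))$. Hence two distinct cubes cannot share a centre, since the point $x_R$ itself would lie in both. It therefore suffices to bound the number of points $x\in X_k$ with $B(x,Ar)\cap B(x_Q,Ar)\neq\emptyset$, i.e.\ with $d(x,x_Q)<2Ar$.

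Next, use the $\rho^k$-separation of $X_k$. The balls $B(x,\rho^k/2)$, $x\in X_k$, are pairwise disjoint. For every $x$ counted above,
\[
B(x,\rho^k/2)\subseteq B(x_Q,2Ar+\rho^k)\subseteq B(x_Q,11A\rho^k)=:B^*.
\]
Moreover $B^*\subseteq B(x,22A\rho^k)$. Iterating the doubling inequality $\lceil \log_2(44A)\rceil$ times therefore gives
\[
\mu(B^*)\leq \Cdoub^{\lceil\log_2(44A)\rceil}\,\mu(B(x,\rho^k/2)).
\]
Summing over the disjoint small balls, all contained in $B^*$, yields
\[
N\,\mu(B^*)\,\Cdoub^{-\lceil\log_2(44A)\rceil}\leq \mu(B^*),
\]
where $N$ is the number of such points. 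Since $0<\mu(B^*)<\infty$, we conclude $N\leq \Cdoub^{\lceil \log_2(44A)\rceil}=:C(A,\Cdoub)$. This bound is independent of $k$ and $Q$.

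The only delicate point is the injectivity of $R\mapsto x_R$, i.e.\ the disjointness of same-generation cubes. If this is not taken as part of Theorem \ref{cubes}, it follows from property (2) together with $Q\subseteq B(x_Q,\ell(Q))$: nested distinct cubes of the same generation would force their centres, which are $\rho^k$-separated net points, to both lie in the smaller cube's inner ball configuration, which contradicts the construction. In practice this is included in the standard statement of the Christ–David theorem.
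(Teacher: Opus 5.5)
The paper omits this proof as easy, and your argument is the standard packing argument it has in mind. The computation is correct: $d(x_R,x_Q)<10A\rho^k$; the balls $B(x,\rho^k/2)$, $x\in X_k$, are pairwise disjoint and lie in $B(x_Q,11A\rho^k)\subseteq B(x,22A\rho^k)$; and doubling gives $N\le \Cdoub^{\lceil \log_2(44A)\rceil}$. Packing the net balls rather than the inner balls $B(x_R,c_0\ell(R))\subseteq R$ has a small advantage: your constant depends only on $A$ and $\Cdoub$, exactly as the lemma states, with no dependence on $c_0$.

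The one thing to fix is how you justify that $R\mapsto x_R$ is injective on $\cubes_k$, i.e.\ that distinct cubes of the same generation are disjoint. This step is genuinely needed, because the lemma counts cubes and not centres. So ``counting distinct centres directly'' is not an alternative: it bounds the number of centres, not the number of cubes sharing a centre.

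Disjointness also does not follow from properties (1)--(3) as they are written in Theorem \ref{cubes}, so your fallback argument in the last paragraph fails. If $R_1\subsetneq R_2$ are both in $\cubes_k$, then (2) and (3) only give $x_{R_1}\in R_2\subseteq B(x_{R_2},5\rho^k)$, i.e.\ $d(x_{R_1},x_{R_2})<5\rho^k$. This is perfectly compatible with $\rho^k$-separation, and nothing in (3) even forbids $x_{R_1}=x_{R_2}$.

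Drop that paragraph and cite within-generation disjointness from the Christ or Hyt\"onen--Kairema construction instead. There it is part of the theorem, since the cubes of generation $k$ are indexed by the points of $X_k$ and are pairwise disjoint. The paper's abbreviated statement of Theorem \ref{cubes} leaves this property out, and its omitted proof of this lemma implicitly relies on it too. With that citation, your proof is complete.
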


The following result from \cite{hytonen2012systems} generalizes the so-called \textit{1/3-trick} to the metric setting. 

\begin{thm}\label{t:family-cubes}
	Suppose $(X,d,\mu)$ is a doubling metric measure space. There exists a constant $M \geq 1$, depending only on $c_0, \rho$ and the doubling constant, and finitely many systems of Christ-David cubes $\calD^j, \ j =1,\dots,M,$ satisfying the following. For each ball $B \subset X$, there is $j \in \{1,\dots,M\}$ and $Q \in \calD^j$ with $B \subseteq Q$ and $\ell(Q) \lesssim r_B.$ 
\end{thm}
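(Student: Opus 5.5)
This result is taken from \cite{hytonen2012systems}. If I had to reprove it, I would follow the \emph{adjacent dyadic systems} construction there. The reduction is to a pointwise statement about centres of balls. Fix a ball $B = B(x,r)$ and choose $k \in \Z$ with $\rho^{k+1} < C_1 r \le \rho^{k}$, where $C_1$ is a large constant fixed later. It suffices to find $j$ and $Q \in \calD^j_{k-m}$, with $m$ a fixed integer independent of $B$, such that $B(x, \rho^{k}) \subseteq Q$. Then $B \subseteq Q$ and $\ell(Q) = 5\rho^{k-m} \lesssim \rho^{-m} r$. So the whole task is to build finitely many systems with the following property: every point of $X$ lies \emph{deep inside} (relative to the scale $\rho^k$) some cube of generation $k-m$ in at least one system.

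First, I would construct the systems. At each level $k$ I would fix a maximal $\rho^k$-separated net $X_k$, nested in the sense $X_k \subseteq X_{k+1}$. By doubling, the points of $X_k$ can be coloured with a bounded number $M$ of labels so that equally labelled points are $\gg \rho^{k}$ apart. For system $j$, I would run the Christ--David construction, that is, choose parent relations between net points of consecutive levels and define cubes as unions of descendants. The twist is that whenever a level-$(k+1)$ point lies within $c\rho^{k}$ of a level-$k$ point carrying label $j$, it is assigned to that point as its parent. The standard verification of Theorem \ref{cubes} (nestedness, covering up to null sets, $B(x_Q,c_0\ell(Q)) \subseteq Q \subseteq B_Q$) goes through for each $j$, since it only uses that parents are within $\lesssim\rho^k$ and that nets are maximal separated. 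Lemma \ref{l:bounded-overlap} controls the number of competing candidates.

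Next comes the deep containment step, which I expect to be the main obstacle. Given $x$, pick a net point $y \in X_{k-m}$ with $d(x,y) < \rho^{k-m}$, and let $j$ be the label of $y$. In system $j$, the preferential parent rule says every descendant chain starting near $y$ is captured by $y$. I must then show that all points within distance $\rho^k$ of $x$ lie in the cube of $y$ at level $k-m$, i.e.\ that points of the $\rho^k$-ball cannot escape to a neighbouring cube through descendants chosen at finer levels. The plan here is a geometric series estimate. A descendant at level $l \ge k-m$ is within $\sum_{i\ge l}\rho^{i} \lesssim \rho^{l}$ of its ancestor. So a point $z$ with $d(z,y) \le c\rho^{k-m}/2$ has all its ancestors within $c\rho^{k-m}$ of $y$, and at level $k-m$ the preferential rule forces the ancestor to be $y$. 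Choosing $m$ large makes $\rho^{k} \ll c\rho^{k-m}$, so the ball around $x$ fits.

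The delicate point is that $x$ must itself be within $c\rho^{k-m}/2$ of $y$, and not merely within $\rho^{k-m}$. This is exactly where more than one system is needed. I would handle it by having the labels vary across the $M$ systems in a way that guarantees, for each $x$, some system in which a label-preferred net point of level $k-m$ sits within $c\rho^{k-m}/2$ of $x$. Concretely, I would use the Hytönen--Kairema trick of adding to $X_{k-m}$, for each system, a shifted subnet drawn from a finer net, and take $M$ to be the number of such shifts. The count of shifts is bounded by doubling, and it is this count that makes $M$ depend only on $c_0$, $\rho$ and $\Cdoub$.
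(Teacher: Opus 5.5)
The paper gives no argument beyond citing \cite{hytonen2012systems} (plus Remark \ref{r:very-small-ball}), so your opening sentence is its whole proof. The problem is the reproof you sketch: its decisive step does not work as described.

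In your construction every system uses the same centre sets $X_k$, and the labels only decide which centre wins a child within radius $c\rho^k$. No fixed radius $c$ makes this useful.
\begin{itemize}
\item If $c$ is below half the separation, the rule already holds for every centre in the standard construction. The labels then add nothing, and a point $x$ with $d(x,X_{k-m})$ close to $\rho^{k-m}$ is not shown to be deep in any cube.
\item To catch every $x$ through $d(x,y)\le c\rho^{k-m}/2$ you need $c>2$. But since $X_k\subseteq X_{k+1}$, a label-$j$ centre $y$ then captures the level-$(k+1)$ copy of every centre $y'\in X_k$ with $\rho^k\le d(y,y')<c\rho^k$, together with all level-$(k+1)$ points near $y'$. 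A whole neighbourhood of $y'$ ends up in the cube of $y$, and the cube of $y'$ violates $B(x_{Q'},c_0\ell(Q'))\subseteq Q'$.
\end{itemize}
So your claim that the standard verification ``only uses that parents are within $\lesssim\rho^k$'' is false. The inner-ball property rests on every centre keeping all children within a fixed fraction of the separation, and a priority radius larger than the covering radius destroys this. You spot the difficulty in your last paragraph, but then hand it to ``the Hyt\"onen--Kairema trick'', which is the entire content of the theorem.

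The missing idea is that the labels must change the \emph{centres}, not the parent rule. In \cite{hytonen2012systems}, roughly, the level-$k$ centres of system $t$ consist of two groups:
\begin{itemize}
\item the points of the finer net $X_{k+1}$ carrying label $t$, where the labels (boundedly many, by doubling) make each class $\gtrsim\rho^k$-separated;
\item the points of $X_k$ lying at distance $\gtrsim\rho^k$ from all of these.
\end{itemize}
You then have to check three things:
\begin{itemize}
\item the new set is still $\gtrsim\rho^k$-separated and $\lesssim\rho^k$-dense;
\item the cube construction you run does not need nested centres, since the new sets are not nested, so your verification via nested maximal nets does not apply verbatim;
\item deep containment follows from the ordinary inner-ball property at the promoted centre, which lies within $\rho^{k+1}$ of $x$.
\end{itemize}
No priority rule beyond the standard one is needed. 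Simply ``adding'' a subnet of a finer net to $X_{k-m}$, as you phrase it, without discarding the old centres near the added points, destroys the separation on which the construction and the inner-ball property rest.
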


\begin{rem}\label{r:very-small-ball}
	The proof in \cite{hytonen2012systems} actually show that $B \subseteq \tfrac{c_0}{4}B_Q$, see Remark 4.13 there. 
\end{rem}

The following is an easy consequence of Theorem \ref{t:family-cubes}. 

\begin{cor}\label{c:C-to-1-update}
	Suppose $(X,d,\mu)$ is a doubling metric measure space, $\cubes$ is a system of Christ-David cubes on $X$ and $Q_0 \in \cubes_0$. There is $N = N(\Cdoub) \in \N$ and finitely many systems of Christ-David cubes $\{\cubes^j\}_{j=1}^N$ such that, 
	\begin{align}\label{e:C-to-1-update-1}
		&\mbox{for each $Q \in \cubes$ there is $j \in \{1,\dots,N\}$ and $R_Q \in \cubes^j$} \\
		&\mbox{such that $10B_Q \subseteq R_Q$ and $\ell(R_Q) \lesssim \ell(Q).$}
	\end{align}
	For each $j \in \{1,\dots,N\}$ and $R \in \cubes^j,$ we have
	\begin{align}\label{e:C-to-1-update-2}
		\# \{Q \in \cubes \colon R_Q = R\}\lesssim_{\Cdoub} 1. 
	\end{align} 
	Finally, there is an positive integer $k^* \lesssim 1$ and, for each $j \in \{1,\dots,N\}$, a collection of disjoint cubes $\{Q^{j,i}_0\}_{i \in I_j}$ contained in $3B_{Q_0}$ such that, 
	\begin{align}\label{e:C-to-1-update-3}
		&\mbox{for each $Q \in \cubes_k(Q_0)$ with $k \geq k^*$} \\
		&\mbox{there is } j \in \{1,\dots, N\} \mbox{and } i \in I_j\mbox{ such that } R_Q \subseteq Q^{j,i}.
	\end{align}
\end{cor}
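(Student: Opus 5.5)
The plan is to build everything from Theorem \ref{t:family-cubes}, applied with the enlarged balls $10B_Q$ in place of $B$. This yields the $M = N(\Cdoub)$ systems $\cubes^1,\dots,\cubes^N$.

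\textbf{Existence of $R_Q$.} Fix $Q \in \cubes$. Theorem \ref{t:family-cubes}, applied to the ball $10B_Q = B(x_Q,10\ell(Q))$, gives an index $j$ and a cube $R \in \cubes^j$ with $10B_Q \subseteq R$ and $\ell(R) \lesssim 10\ell(Q) \lesssim \ell(Q)$. If several cubes qualify, we choose one, say among those of maximal generation, and call it $R_Q$. This proves \eqref{e:C-to-1-update-1}.

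\textbf{Bounded multiplicity.} Fix $R \in \cubes^j$ and suppose $R_Q = R$. Then $\ell(Q) \le \ell(R)/10$, since $10B_Q \subseteq R \subseteq B_R$ and the ball $10B_Q$ contains points at distance about $10\ell(Q)$ from $x_Q$ (at least when $X$ is large compared to $\ell(Q)$). Combined with $\ell(R) \lesssim \ell(Q)$, this gives $\ell(Q) \sim \ell(R)$, so only boundedly many generations $k$ are possible. For each such $k$, the cubes $Q \in \cubes_k$ with $Q \subseteq R \subseteq B_R$ are disjoint and contain the balls $B(x_Q,c_0\ell(Q))$ of comparable radius inside $B_R$. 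By doubling, there are at most $C(\Cdoub)$ of them; equivalently, one can use Lemma \ref{l:bounded-overlap}. This proves \eqref{e:C-to-1-update-2}.

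The small technical point in this step is the degenerate case where $X$ has diameter smaller than $10\ell(Q)$. There the comparison $\ell(R) \gtrsim \ell(Q)$ can fail. I would handle it by restricting the choice to cubes with $\ell(R) \ge \ell(Q)$, using that the construction of \cite{hytonen2012systems} allows scales comparable to $r_B$. Alternatively, one notes that in this case $X$ is bounded and all such $Q$ are contained in a single point-sized regime, so only finitely many generations occur, again by doubling.

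\textbf{The last claim.} This is where I expect the real work. Take $k^*$ so large that $\ell(R_Q) \le C\rho^{k}\ell(Q_0)$ is much smaller than $\ell(Q_0)$ for all $k \ge k^*$; then $R_Q \subseteq B_{R_Q} \subseteq 2B_{Q_0}$. For each $j$, let $\{Q_0^{j,i}\}_{i\in I_j}$ be the maximal cubes of $\cubes^j$ among
\begin{align}
\{R_Q : Q \in \cubes_k(Q_0),\ k\ge k^*,\ R_Q \in \cubes^j\}.
\end{align}
These are pairwise disjoint by the nesting property (2) of Theorem \ref{cubes}. They lie in $3B_{Q_0}$ by the scale estimate just mentioned. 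Every $R_Q$ with $R_Q \in \cubes^j$ is contained in one of them by maximality, and a maximal element exists because the cube sizes are bounded above by $\ell(Q_0)$. Finally, $k^* \lesssim 1$ because the implicit constant in $\ell(R_Q)\lesssim\ell(Q)$ depends only on $\Cdoub$, $c_0$ and $\rho$.
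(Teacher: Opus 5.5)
The existence of $R_Q$ and your treatment of \eqref{e:C-to-1-update-3} are fine. This is also the natural route; the paper omits the proof, calling the corollary an easy consequence of Theorem \ref{t:family-cubes}.

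The gap is in the bounded-multiplicity step. You claim that $10B_Q$ contains points at distance about $10\ell(Q)$ from $x_Q$. This fails in general doubling spaces, and not only when $\diam(X)$ is small. Take $X=\{0\}\cup[1,\infty)\subset\R$ with $\mu=\delta_0+\calL^1|_{[1,\infty)}$, which is doubling and unbounded. Then $10B_Q=\{0\}$ for every $Q\ni 0$ with $\ell(Q)<1/10$. For such $Q$, the singleton $\{0\}$ is a cube of $\cubes^j$ at every generation $m$ with $5\rho^m<1$. So your rule ``choose $R_Q$ of maximal generation'' is not defined. The only remaining constraint, $\ell(R_Q)\lesssim\ell(Q)$, lets unboundedly many of the cubes $Q=\{0\}$ (one per generation) be assigned the same fine cube $R$, which violates \eqref{e:C-to-1-update-2}.

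Your fallback for bounded $X$ is also incorrect. The set $X$ is a cube of $\cubes$ at infinitely many generations, and each occurrence needs its own $R_Q$. So it is not true that only finitely many generations are involved.

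The fix is the restriction you mention, but it must be imposed for every $Q$, as part of the definition of $R_Q$, not only in a degenerate case. If the cube $R\in\cubes^j$ from Theorem \ref{t:family-cubes} has $\ell(R)<\ell(Q)$, replace it by its ancestor in $\cubes^j$ at generation $k(Q)$; this ancestor still contains $10B_Q$. Then $\ell(Q)\le\ell(R_Q)\lesssim\ell(Q)$ holds by construction. Consequently, all $Q$ with $R_Q=R$ lie in boundedly many generations $k$ with $\rho^k\sim\ell(R)$. For each such $k$, their centres $x_Q\in R\subseteq B_R$ are $\rho^k$-separated, so there are boundedly many by doubling. No information about the shape of balls is needed. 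With this choice of $R_Q$, your argument for the last claim goes through unchanged, since $\ell(R_Q)\lesssim\ell(Q)$ still holds.
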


\addtocontents{toc}{\SkipTocEntry}
\subsection*{Upper gradients and Newtonian-Sobolev spaces} 
Upper gradients were first introduced in \cite{heinonen1998quasiconformal}. A non-negative Borel function $g$ on a metric space $(X,d)$ is said to be an \textit{upper gradient} of a real-valued function $u$ if the inequality
\begin{align}\label{e:upper-grad}
	|u(\gamma(a)) - u(\gamma(b)) | \leq \int_\gamma g \, ds
\end{align}
holds for every rectifiable curve $\gamma \colon [a,b] \to X$. Here, the integral of $g$ on the right-hand side is computed with respect to the arc length measure along $\gamma$ induced by the metric $d$. 

Following \cite{shan2000newtonian}, we define the Newtonian-Sobolev space $N^{1,p}(X)$ as the collection of all function $f \in L^p(X)$ which admit an upper gradient $g \in L^p(X).$ 
A semi-norm on $N^{1,p}(X)$ is provided by 
\begin{align}
	\| u \|_{N^{1,p}}^p  =  \| u \|_{L^p}^p  + \inf_{g} \| g \|_{L^p}^p, 
\end{align}
where the infimum is taken over all upper gradients $g$ of $u$. By the result in \cite{shan2000newtonian} and an observation of Haj{\l}asz \cite{hajlasz2003sobolev} in the case $p = 1$, there exists a unique minimal $g_u$, called the \textit{minimal weak upper gradient}, which is an upper gradient along $p$-almost every rectifiable curve and for which 
\begin{align}
		\| u \|_{N^{1,p}}^p  =  \| u \|_{L^p}^p  +  \| g_u \|_{L^p}^p.
\end{align}
Here, \textit{$p$-almost every} is defined with respect to the \textit{$p$-modulus} $\text{Mod}_p$ (see \cite{heinonen2012lectures}) and \textit{minimal} means that $g_u \leq g$ $\mu$-a.e. for any other $g$ which is an upper gradient along $p$-almost every rectifiable curve. 

Given a set $E \subseteq X,$ define the Newtonian space with zero boundary values as 
\begin{align}
	N_{0}^{1,p}(E) \coloneqq \{ u \in N^{1,p}(X) \colon u = 0 \ p\mbox{-quasi-everywhere in } X \setminus E\}.
\end{align}
Here, \textit{$p$-quasi-everywhere} is defined with respect to a certain $p$-capacity in $X$, see for instance \cite{shan2000newtonian}. Finally, for an open set $\Omega \subseteq X$, let 
\begin{align}
	N^{1,p}_{\rm loc}(\Omega) \coloneqq \{u \in L^2_{\rm loc}(\Omega) \colon \eta u \in N^{1,2}(X) \mbox{ for all } \eta \in {\rm LIP}_c(\Omega)\}. 
\end{align}

\addtocontents{toc}{\SkipTocEntry}
\subsection*{Poincar\'e inequalities} 
Following \cite{heinonen1998quasiconformal}, we say a metric measure space supports a \textit{weak $(q,p)$-Poincar\'e inequality} if there are constants $\API ,\CPI \geq 1$ such that 
	\begin{align}\label{e:PI}
		\left(\dashint_B |u - \langle u  \rangle_B |^q  \, d\mu\right)^\frac{1}{q}  \leq \CPI r_B \left( \dashint_{\API B} g^p_u \, d\mu \right)^\frac{1}{p}
	\end{align}
	for all balls $B$ in $X$ and all integrable functions $u \colon B \to \R$. Here, and throughout the remainder of the article,
	\[\langle u \rangle_B \coloneqq \dashint_B u \,d\mu \coloneqq \frac{1}{\mu(B)} \int_B u \, d\mu\]
	denotes the average integral of $u$ over $B$. 
	
	By H\"older's inequality, it is easily seen that a weak Poincar\'e inequality is stronger for larger values of $q$ and smaller values of $p$. If the underlying space is doubling, a weak $(1,p)$-Poincar\'e inequality is known to self-improve. The following result, see \cite[Theorem 5.1]{hajlasz1995sobolev}, demonstrates this. 
	
	\begin{thm}\label{t:upgrade-PI}
		Suppose $(X,d,\mu)$ is a doubling metric measure space which supports a weak $(1, p)$-Poincar\'e inequality. Then, $X$ supports a weak $(q,p)$-Poincar\'e inequality for some $q > p.$ In particular, $X$ supports a weak $(p,p)$-Poincar\'e inequality. 
	\end{thm}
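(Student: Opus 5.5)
This is the self-improvement of the Poincaré inequality (Theorem \ref{t:upgrade-PI}). I would follow Haj{\l}asz--Koskela and argue in three steps. First, a weak $(1,p)$-Poincaré inequality gives a pointwise Riesz-potential bound. Second, that potential is mapped into a higher Lebesgue space by a weak-type estimate. Third, truncation upgrades the weak-type estimate to a strong one.

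\textbf{Step 1 (telescoping).} Fix a ball $B = B(x_B,r_B)$, a function $u$ with upper gradient $g$, and a Lebesgue point $x \in B$ of $u$. Set $B_i = B(x, 2^{-i} r_B)$ for $i \geq 1$ and $B_0 = B$. By doubling, each $B_{i+1}$ has measure comparable to $B_i$. Apply the $(1,p)$-Poincaré inequality \eqref{e:PI} on each $B_i$ and telescope:
\begin{align}
|u(x) - \langle u\rangle_B| \lesssim \sum_{i\ge 0} 2^{-i} r_B \left(\dashint_{\API B_i} g^p\,d\mu\right)^{1/p}.
\end{align}
The sum on the right is a discrete fractional maximal function,
\[
M^{\#} g(x) = \sup_{i} \, 2^{-i} r_B \Bigl(\dashint_{\API B_i} g^p\Bigr)^{1/p} (2^{-i})^{-\varepsilon},
\]
weighted by $(2^{-i})^{\varepsilon}$ and summed. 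It is controlled by $r_B^{1-s/q}\,\mathcal M_{s/q\text{-frac}}$-type operators, where $s = \log_2 \Cdoub$ is the doubling dimension coming from $\mu(B_i)\gtrsim 2^{-is}\mu(B)$.

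\textbf{Step 2 (weak type).} Take $q$ with $1/q > 1/p - 1/s$, or any $q$ if $p > s$. Split the sum at the index where $2^{-i} r_B$ matches the threshold determined by the level $t$, and use
\[
\mu(B_i) \gtrsim (2^{-i})^{s} \mu(B).
\]
The usual Hedberg argument then gives
\[
\mu\bigl(\{x\in B : |u - \langle u\rangle_B| > t\}\bigr) \lesssim \mu(B) \left( \frac{r_B}{t}\Bigl(\dashint_{\sigma B} g^p\Bigr)^{1/p}\right)^{q}.
\]
Here $\sigma$ is a fixed multiple of $\API$, which comes from the enlarged balls $\API B_i$ staying inside $\sigma B$. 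I expect this weak-type bound to be the main obstacle. The delicate point is the bookkeeping of constants when the balls $\API B_i$ are centred at $x$ rather than at $x_B$, so that their union still lies inside a fixed dilate $\sigma B$.

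\textbf{Step 3 (truncation).} To pass from weak to strong type, apply Step 2 to the truncations $v_k = \min\{\max\{u - b - 2^k, 0\}, 2^k\}$ with $b$ a median of $u$ on $B$. Each truncation has upper gradient $g \chi_{\{2^k < u-b \le 2^{k+1}\}}$, and these sets are disjoint in $k$. Summing over $k$ then gives
\[
\Bigl(\dashint_B |u-b|^q\Bigr)^{1/q} \lesssim r_B \Bigl(\dashint_{\sigma B} g^p\Bigr)^{1/p},
\]
which is valid since $q \ge p$. Replacing $b$ by $\langle u\rangle_B$ costs at most a factor of $2$.

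This proves the $(q,p)$ inequality with dilation constant $\sigma \lesssim \API$. Choosing $q > p$ is always possible: if $p < s$, take any $q < ps/(s-p)$, which exceeds $p$; otherwise take any $q$. Finally, Hölder's inequality gives the $(p,p)$ case from the $(q,p)$ case.
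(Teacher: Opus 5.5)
Your proposal is correct and is essentially the standard Haj{\l}asz--Koskela proof: telescoping, a Hedberg-type weak estimate, then truncation at a median. That is exactly the result the paper relies on, since it cites this theorem rather than proving it. Only cosmetic repairs are needed: begin the chain at $B(x,2r_B)\supseteq B$, because $B(x,r_B/2)$ need not lie in $B$; drop the garbled fractional-maximal remark in Step 1, since the Hedberg split by itself already gives the weak bound at the endpoint $q=sp/(s-p)$; and handle $p=s$ by enlarging the doubling exponent $s$.
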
 
	
	The following result gives us a way to glue Sobolev functions with zero boundary values on disjoint open sets. 
	
\begin{lem}\label{l:stitch-sobolev}
	Suppose $(X,d,\mu)$ is a doubling metric measure space which supports a weak $(1, p)$-Poincar\'e inequality, and $f \in N^{1,2}(X).$ Let $\{U_i\}_{i \in I}$ be a collection of disjoint open sets in $X$ and $\{\phi_i\}_{i \in I}$ be a collection of functions on $X$ such that $\phi_i \in N^{1,2}_0(U_i)$ and $\|g_{\phi_i} \|_{L^{2}(U_i)} \leq C\|  g_f \|_{L^{2}(U_i)}$ for each $i \in I.$ Then, $f' \coloneqq f + \sum_{i \in I} \phi_i \in N^{1,2}(X).$
\end{lem}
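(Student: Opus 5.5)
The approach is to build a single upper gradient for $f'$ by gluing one piece on each $U_i$ to $g_f$ outside $\bigcup_i U_i$, and then check the curve inequality \eqref{e:upper-grad}. First, $f' \in L^2(X)$. Since the $U_i$ are disjoint and each $\phi_i$ vanishes off $U_i$ (quasi-everywhere), we have $f' = f + \phi_i$ on $U_i$ and $f' = f$ outside $U := \bigcup_i U_i$. For the $L^2$ bound on $\phi_i$, one can either use the $L^2$ control implicit in $\phi_i \in N^{1,2}_0(U_i)$ together with a Poincar\'e/Sobolev-type inequality for zero boundary values, or simply assume the summability of $\|\phi_i\|_{L^2}^2$; I expect the intended application to supply this, for instance via a Poincar\'e inequality on $U_i$ of bounded diameter. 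Formally, $\|f'\|_{L^2}^2 \le 2\|f\|_{L^2}^2 + 2\sum_i \|\phi_i\|_{L^2}^2$ by disjointness.

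Next, define the candidate gradient
\[
g \coloneqq g_f + \sum_{i\in I} g_{\phi_i}\chi_{U_i}.
\]
By the hypothesis $\|g_{\phi_i}\|_{L^2(U_i)} \le C\|g_f\|_{L^2(U_i)}$ and disjointness,
\[
\|g\|_{L^2}^2 \lesssim \|g_f\|_{L^2}^2 + \sum_i C^2\|g_f\|_{L^2(U_i)}^2 \lesssim \|g_f\|^2_{L^2(X)} < \infty .
\]
So the whole task reduces to showing that $g$ is a $2$-weak upper gradient of $f'$.

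The key fact is that each $\phi_i$, being in $N_0^{1,2}(U_i)$, has $g_{\phi_i} = 0$ a.e. outside $U_i$. Moreover, after modifying $\phi_i$ on a set of capacity zero, $\phi_i$ is absolutely continuous along $2$-a.e. curve and vanishes identically on $X\setminus U_i$ along such curves, since sets of zero capacity are avoided by $\mathrm{Mod}_2$-a.e. curve. Hence $\chi_{U_i}g_{\phi_i}$ is a weak upper gradient of $\phi_i$. Now take a curve $\gamma$ outside an exceptional family, which is a countable union of $\mathrm{Mod}_2$-null families (one for each $i$ and one for $f$), and so is null because $I$ is countable, the $U_i$ being disjoint open sets in a separable space.

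To verify the inequality along $\gamma$, split $\gamma$ according to $\gamma^{-1}(U)$, which is an open subset of $[a,b]$, hence a countable union of intervals. Each such interval maps into a single $U_i$ by connectedness and disjointness. Every function involved is absolutely continuous along $\gamma$, so one can argue with derivatives: for a.e. $t$, $|(f'\circ\gamma)'(t)| \le g(\gamma(t))|\dot\gamma(t)|$. Indeed, if $\gamma(t)\in U_i$ then $f' = f+\phi_i$ near $t$. If $\gamma(t)\notin U$, then $\phi_j\circ\gamma(t)=0$ for all $j$, and at a.e. such $t$ the derivatives of all the $\phi_j\circ\gamma$ vanish, because they are zero on that set and one takes density points.

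The main obstacle is precisely this last step: absolute continuity of $f'\circ\gamma$ when infinitely many intervals accumulate. I would handle it by noting that $\sum_j \int_\gamma \chi_{U_j} g_{\phi_j} \le \int_\gamma g < \infty$ for $2$-a.e. $\gamma$ (as $g\in L^2$). This finiteness lets the total variation of $\sum_j \phi_j\circ\gamma$ be controlled, giving absolute continuity of the sum and hence of $f'\circ\gamma$. This yields $f' \in N^{1,2}(X)$. Poincar\'e and doubling enter only through standard facts, for example that quasi-everywhere vanishing and Newtonian functions are quasicontinuous.
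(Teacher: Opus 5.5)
Your upper-gradient argument is the paper's argument. You use the same $g$: the factor $\chi_{U_i}$ changes $g$ only on a $\mu$-null set, since $g_{\phi_i}=0$ a.e.\ off $U_i$. You bound $g$ in $L^2$ by disjointness in the same way, and you discard countably many $\mathrm{Mod}_2$-null families just as the paper does (the paper uses your observation that $I$ is countable without saying so).

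The interval decomposition, absolute continuity and density-point reasoning are correct but unnecessary, and what you call the main obstacle is not one. Let $\gamma$ join $x$ to $y$ and lie outside the exceptional families, including the curves meeting the capacity-zero sets where some $\phi_i\neq 0$ off $U_i$. Then at each endpoint at most one $\phi_i$ is non-zero, so by monotone convergence
\[
|f'(y)-f'(x)|\le |f(y)-f(x)|+\sum_{i\in I}|\phi_i(y)-\phi_i(x)|\le \int_\gamma g_f\,ds+\sum_{i\in I}\int_\gamma g_{\phi_i}\,ds=\int_\gamma g\,ds .
\]
The same holds for every subcurve, and this is all the paper's ``easy to check'' requires.

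The part of the conclusion you leave open, $f'\in L^2(X)$, is a genuine gap. However, it cannot be filled from the hypotheses as stated, and the paper's one-line justification (disjoint supports) does not fill it either. Here is a counterexample in $\R$:
\begin{itemize}
\item take disjoint open intervals $U_i$ of length $4^i$;
\item let $f=\sum_i 2^{-i/2}\psi_i$, where $\psi_i$ is a translate into $U_i$ of a fixed smooth bump;
\item let $\phi_i$ be the tent of height $2^{i/2}$ on $U_i$.
\end{itemize}
Then $f\in N^{1,2}(\R)$, $\phi_i\in N^{1,2}_0(U_i)$, and $\|g_{\phi_i}\|_{L^2}^2=4\cdot 2^{-i}\lesssim \|g_f\|_{L^2(U_i)}^2$. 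Yet $\|\phi_i\|_{L^2}^2=8^i/3$, so $f'\notin L^2(\R)$.

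So instead of an ``either/or'' you need an explicit extra hypothesis. For example, assume each $U_i$ lies in a ball of radius at most $r_0<\diam(X)/3$. Then Lemma \ref{l:sobolev-poincare} (with $q=p=2$) gives $\|\phi_i\|_{L^2}\lesssim r_0\|g_{\phi_i}\|_{L^2}\lesssim r_0\|g_f\|_{L^2(U_i)}$, which is square-summable by disjointness. In the paper's own application nothing is lost: the sum defining $f_k$ runs over $\cubes_k(Q_0)$, which is a finite family by doubling, so $f'\in L^2$ is automatic there.
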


\begin{proof}
	Since $\supp(\phi_i) \subseteq U_i$ and $\{U_i\}_{i \in I}$ is a disjoint collection of sets, one has that $f' \in L^2(X)$. Thus, we only need to check that $f'$ has an upper gradient in $L^2(X)$. By \cite[Lemma 6.2.2]{heinonen2015sobolev}, it suffices to check that there exists $g \in L^2(X)$ which is an upper gradient for $f'$ along $p$-almost every rectifiable curve. 
	
	We consider the function
	\begin{align}
		g \coloneqq g_f + \sum_{i \in I} g_{\phi_i}.
	\end{align}
	Since $\phi_i \in N_0^{1,2}(U_i)$, its minimal weak upper gradient $g_{\phi_i}$ is zero $\mu$-a.e. in $X  \setminus U_i$. Then, since the $U_i$ are disjoint, the estimates $\|g_{\phi_i} \|_{L^{2}(U_i)} \leq C\|  g_f \|_{L^{2}(U_i)}$ for $i \in I$ imply $\| g \|_{L^2(X)} \lesssim \| g_f \|_{L^2(X)}$ i.e., $g \in L^2(X)$. Let $\Gamma$ be the collection of rectifiable curves in $X$. Let $\Gamma_f$ be the rectifiable curves along which $g_f$ fails to be an upper gradient for $f$. Define $\Gamma_{\phi_i}$ similarly. Then, let 
	\begin{align}
		\Gamma' = \Gamma \setminus \left( \Gamma_f  \cup \bigcup_{i \in I} \Gamma_i\right).  
	\end{align}
	By countable sub-additivity of the $p$-modulus (\cite{heinonen2012lectures}), $\Gamma' \subseteq \Gamma$ has full $p$-modulus. Moreover, since for each $\gamma \in \Gamma'$ and $i \in I$, $g_f$ is upper gradient for $f$ along $\gamma$ and $g_{\phi_i}$ is upper gradient for $\phi_i$ along $\gamma$, it is easy to check that $g$ is an upper gradient for $f'$ along $\gamma$. Thus, $g$ is an upper gradient for $f'$ along $p$-almost every rectifiable curve. 
\end{proof}

The following Sobolev-type inequality is proved in \cite{bjorn2002boundary} (see Proposition 3.1), based on arguments given in \cite[Lemma 2.8]{kinnunen2001regularity}.

\begin{lem}\label{l:sobolev-poincare}
	Suppose $(X,d,\mu)$ is a doubling metric measure space which supports a weak $(q, p)$-Poincar\'e inequality. There exists $C > 0$, depending only on the doubling and PI constants, such that, if $B$ is a ball in $X$ with $0 < r_B < (1/3) \diam X$ and $u \in N_0^{1,p}(B)$, then 
	\begin{align}
		\left( \dashint_B | u |^q \, d\mu \right)^\frac{1}{q} \leq Cr \left( \dashint_B g_u^p \, d\mu \right)^\frac{1}{p}.
	\end{align}
\end{lem}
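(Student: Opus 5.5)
The inequality is a Sobolev--Poincaré estimate for functions with zero boundary values. The plan is to extend $u$ by zero outside $B$ and then apply the Poincaré inequality on a ball large enough to see a substantial region where $u$ vanishes. That region is what controls the mean of $u$.

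Let $u \in N^{1,p}_0(B)$ with $B = B(x,r)$. Since $u = 0$ $p$-quasi-everywhere in $X\setminus B$, its minimal weak upper gradient $g_u$ vanishes $\mu$-a.e. outside $B$. Set $B' = 2B$. Because $r < \tfrac13 \diam X$, the set $2B\setminus B$ should contain a ball of radius comparable to $r$. I would argue this by connectedness: a doubling space supporting a Poincaré inequality is quasiconvex, hence connected. So there is a point $y$ with $d(x,y)= \tfrac32 r$, and then $B(y, r/2)\subseteq 2B\setminus B$. Doubling gives $\mu(B(y,r/2)) \gtrsim \mu(2B)$. Thus the zero set $E = \{u = 0\}\cap 2B$ satisfies $\mu(E) \geq c\,\mu(2B)$ with $c$ depending only on $\Cdoub$.

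Next I estimate the mean. On $E$ we have $|\langle u\rangle_{2B}| = |u - \langle u\rangle_{2B}|$. Averaging over $E$ and using Hölder gives
\begin{align}
|\langle u\rangle_{2B}| \le \Big(\dashint_E |u-\langle u\rangle_{2B}|^q\Big)^{1/q} \le c^{-1/q}\Big(\dashint_{2B} |u-\langle u\rangle_{2B}|^q\Big)^{1/q}.
\end{align}
The triangle inequality then gives $\big(\dashint_{2B}|u|^q\big)^{1/q} \lesssim \big(\dashint_{2B}|u-\langle u\rangle_{2B}|^q\big)^{1/q}$. Apply the weak $(q,p)$-Poincaré inequality \eqref{e:PI} on $2B$:
\begin{align}
\Big(\dashint_{2B}|u|^q\Big)^{1/q} \lesssim r\Big(\dashint_{2\API B} g_u^p\Big)^{1/p}.
\end{align}
Since $g_u = 0$ a.e. outside $B$, we get $\int_{2\API B} g_u^p = \int_B g_u^p$. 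Doubling converts $\mu(2\API B)$ and $\mu(2B)$ into $\mu(B)$ at the cost of constants depending only on $\Cdoub$ and $\API$. This yields the claim with $C$ depending only on the doubling and Poincaré constants.

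There are two technical points. The first is that the Poincaré inequality is applied to $u$ extended by zero, so one must know that $g_u$ is still a $p$-weak upper gradient globally; this is built into the definition of $N^{1,p}_0(B)\subseteq N^{1,p}(X)$. The second, and the main obstacle, is producing the annular ball with measure comparable to $\mu(2B)$. This is exactly where the hypothesis $r < \tfrac13\diam X$ and connectedness (via quasiconvexity from the Poincaré inequality) are used. If connectedness were unavailable, I would instead use a point $y$ with $d(x,y)\ge \tfrac32 r$, which exists by the diameter bound, together with a chain of balls, but quasiconvexity should suffice.
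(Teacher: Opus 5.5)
Your argument is correct and is essentially the standard proof behind the result the paper cites without proof (Bj\"orn's Proposition 3.1, following Kinnunen--Shanmugalingam): use $r_B<\tfrac13\diam X$ and connectedness to place $B(y,r/2)$ inside $2B\setminus B$, use the resulting large zero set of $u$ to control $\langle u\rangle_{2B}$, and apply the $(q,p)$-Poincar\'e inequality on $2B$ together with $g_u=0$ a.e.\ outside $B$. One small repair: quasiconvexity needs completeness, but connectedness follows directly from the Poincar\'e inequality, since the characteristic function of a nontrivial clopen set has upper gradient $0$; so your fallback chain-of-balls argument is unnecessary.
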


\addtocontents{toc}{\SkipTocEntry}
\subsection*{Derivatives and Cheeger harmonic functions} In \cite{cheeger1999differentiability}, Cheeger studied differentiability properties of Lipschitz functions on doubling metric measure spaces which support a weak Poincar\'e inequality. He proves the following remarkable generalisation of Rademacher's Theorem in this setting.

\begin{thm}\label{t:cheeger}
	Suppose $(X,d,\mu)$ is a doubling metric measure space which supports a weak $(1,p)$-Poincar\'e inequality. There exists a countable collection of Borel sets $U_i$ and Lipschitz functions $\varphi_i \colon X \to \R^N$, where $N$ depends only on the doubling and PI constants, satisfying the following. First, 
	\begin{align}
		\mu\left( X \setminus \bigcup_{i=1}^\infty U_i \right) = 0.
	\end{align}
	Second, for each Lipschitz $f \colon X \to \R$ and almost every $x_0 \in U_i$, there exists a unique $Df(x_0) \in \R^N$ (the derivative of $f$ at $x_0$) such that 
	\begin{align}
		\limsup_{X \ni x \to x_0} \frac{|f(x) - f(x_0) - Df(x_0)\cdot (\varphi_i(x) - \varphi_i(x_0)) |}{d(x,x_0)} = 0. 
	\end{align}
\end{thm}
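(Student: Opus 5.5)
The plan follows Keith's ``Lip--lip'' route to Cheeger's theorem rather than Cheeger's original blow-up argument; I would proceed in four steps. Here $\operatorname{Lip} f(x)=\limsup_{r\to0}\sup_{d(x,y)\le r}|f(y)-f(x)|/r$ and $\operatorname{lip} f(x)=\liminf_{r\to0}\sup_{d(x,y)\le r}|f(y)-f(x)|/r$.

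\textbf{Step 1 (upper gradients and the Lip--lip condition).} For Lipschitz $f$, $\operatorname{Lip} f$ is an upper gradient. Using the weak $(1,p)$-Poincar\'e inequality \eqref{e:PI} and the doubling property, I would show $g_f \sim \operatorname{Lip} f$ $\mu$-a.e. The route is to compare $f$ on small balls with truncations and apply \eqref{e:PI} at many scales; this is Cheeger's identification of the minimal upper gradient. The same argument, together with Theorem \ref{t:upgrade-PI}, gives the Lip--lip comparison: there is $K=K(\Cdoub,\CPI,\API)$ with $\operatorname{Lip} f\le K\operatorname{lip} f$ a.e. for every Lipschitz $f$. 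Heuristically, if $\operatorname{lip} f(x)$ is small, then at some scale $f$ oscillates little on a whole ball around $x$. By Lebesgue differentiation of $g_f$ and the Poincar\'e inequality, this forces $g_f(x)$, and hence $\operatorname{Lip} f(x)$, to be small.

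\textbf{Step 2 (bounded dimension).} Call Lipschitz functions $\varphi=(\varphi_1,\dots,\varphi_k)$ \emph{independent} on a Borel set $U$ of positive measure if $\operatorname{Lip}(a\cdot\varphi)(x)>0$ for all $a\in\R^k\setminus\{0\}$ and a.e.\ $x\in U$. By compactness of the unit sphere and continuity of $a\mapsto \operatorname{Lip}(a\cdot\varphi)(x)$ (it is Lipschitz in $a$), we can shrink $U$ so that $\operatorname{Lip}(a\cdot\varphi)\ge c|a|$ on $U$. Combining this with Step 1 and fixing a density point $x$ and a scale $r$ at which $\operatorname{lip}$ is nearly attained, the map $\varphi$ is bi-Lipschitz-like on a $\delta r$-net at scale $r$. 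Its image therefore contains $\gtrsim \delta^{-k}$ well-separated points. Doubling bounds the size of such a net by $C(\Cdoub)\delta^{-\log_2\Cdoub}$. Choosing $\delta$ small then forces $k\le N(\Cdoub,\CPI,\API)$. I expect this packing argument, and in particular making $\operatorname{lip}$ hold uniformly at a single scale for all directions $a$ simultaneously, to be the main obstacle. I would first try an Egorov-type uniformization over a countable dense set of directions $a$, then pass to all $a$ by the Lipschitz dependence on $a$.

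\textbf{Steps 3 and 4 (charts and derivative).} Fix a countable family $\mathcal F$ of Lipschitz functions that is dense in the sense of the Lipschitz seminorm and uniform convergence on bounded sets, for instance distance functions to points of a countable dense set. By Step 2 and exhaustion, I would choose countably many Borel sets $U_i$ covering $X$ up to a null set. On each $U_i$ there is an independent tuple $\varphi_i$ from $\mathcal F$ of maximal length $N_i\le N$; padding with zeros gives maps into $\R^N$. For a Lipschitz $f$, maximality means $(f,\varphi_i)$ is dependent at a.e.\ point of $U_i$. Hence for a.e.\ $x_0$ there is $(b,a)\neq0$ with $\operatorname{Lip}(bf+a\cdot\varphi_i)(x_0)=0$, and independence forces $b\ne0$. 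Setting $Df(x_0)=-a/b$ gives exactly the displayed limit, since $\operatorname{Lip} h(x_0)=0$ for $h=f-Df(x_0)\cdot\varphi_i$ means $|h(x)-h(x_0)|=o(d(x,x_0))$. Uniqueness follows from independence: two candidates $a,a'$ give $\operatorname{Lip}((a-a')\cdot\varphi_i)(x_0)=0$, so $a=a'$. Borel measurability of $x_0\mapsto Df(x_0)$ follows by writing the defining condition through countably many rational radii and coefficients. Finally, passing from $f\in\mathcal F$ to general Lipschitz $f$ is not needed, since the dependence argument applies to any Lipschitz $f$ directly. Only the construction of the $U_i$ uses the countable family.
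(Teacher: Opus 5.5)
The paper gives no proof of this statement; it is quoted from \cite{cheeger1999differentiability}. Your Lip--lip route (Keith's) is a legitimate alternative to Cheeger's blow-up proof, but its core, Step 2, does not go through as written. At the density point $x$ you only extract a lower bound at one point and one scale: for each unit $a$, some $y\in B(x,r)$ has $|a\cdot(\varphi(y)-\varphi(x))|\ge cr$. This says only that $\varphi(B(x,r))-\varphi(x)$ lies in no slab $\{v:|a\cdot v|<cr\}$. The union of the coordinate segments $[-\sqrt{k}\,cr,\sqrt{k}\,cr]\,e_i$ already has this property, and it contains only $C(k)\delta^{-1}$ points that are $\delta r$-separated, not $\gtrsim\delta^{-k}$. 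Nor is $\varphi$ ``bi-Lipschitz-like'' on nets: $\varphi(y)=y_1$ on $\R^2$ is independent everywhere and collapses every net. Moreover, your only upper bound for $\varphi$ on $B(x,r)$ is its global Lipschitz constant, which $\Cdoub,\CPI$ do not control. The Egorov step you single out only fixes a common scale for all directions; it does not address this.

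What works is counting dimensions rather than image points. By Lusin and Egorov, take a set $G$ and a density point $x\in G$ such that $\sup_{d(z,y)\le\rho}|a\cdot(\varphi(z)-\varphi(y))|\le 2\rho\operatorname{Lip}(a\cdot\varphi)(x)$ for all $y\in G\cap B(x,2r)$, all $\rho\le r$ and all $a\in\R^k$, with $r$ small. Let $\{y_j\}_{j=1}^M$ be a maximal $\delta r$-separated subset of $G\cap B(x,2r)$ containing $x$; doubling gives $M\le C(\Cdoub)\delta^{-\log_2\Cdoub}$. If $k>M$, some $a\ne0$ satisfies $a\cdot\varphi(y_j)=a\cdot\varphi(x)$ for all $j$. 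For $y\in B(x,r)$, step to a nearby point of $G$ and then to a net point. This gives $|a\cdot(\varphi(y)-\varphi(x))|\le(2\delta+o(1))\,r\operatorname{Lip}(a\cdot\varphi)(x)$. That contradicts $\operatorname{lip}(a\cdot\varphi)(x)\ge K^{-1}\operatorname{Lip}(a\cdot\varphi)(x)>0$ once $\delta\ll K^{-1}$, so $k\le N(\Cdoub,K)$.

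There are also several smaller problems.

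In Step 1 your heuristic runs the wrong way. Poincar\'e bounds oscillation by $g_f$; it cannot bound $g_f(x)$ by the oscillation of $f$ on one ball. The Lip--lip bound instead comes from $g_f\le\operatorname{lip}f$, since $\operatorname{lip}f$ is an upper gradient of a Lipschitz $f$, combined with $\operatorname{Lip}f\le Cg_f$.

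Independence must mean ``for a.e.\ $x$, for all $a$''. With your quantifier order, $\varphi(y)=(y_1,\sin y_1)$ on $\R^2$ qualifies although it degenerates at every point, and the compactness step fails.

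In Step 3, no countable family is dense in the Lipschitz seminorm: on $\R$, Lipschitz functions modulo constants form a copy of $L^\infty$. Also, maximality within $\mathcal F$ does not make $(f,\varphi_i)$ dependent for an arbitrary Lipschitz $f$, contrary to your last sentence. Exhaust over all Lipschitz tuples instead; this is possible because lengths are bounded by $N$ and $\mu$ is $\sigma$-finite.

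Finally, padding with zeros destroys independence, so $Df(x_0)$ is unique only in $\R^{N_i}$. Cheeger's theorem has charts of varying dimension $N_i\le N$, and the uniqueness claim in the statement must be read that way.
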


The operator $D$ can be extended to all functions in the Newtonian-Sobolev class $N^{1,p}(X)$ (see \cite[Theorem 4.47]{cheeger1999differentiability}). This extension is linear, satisfies Leibniz rule, and for each $u \in N^{1,p}(X),$ one has 
\begin{align}
	|D u | \sim g_u  \quad \mu\mbox{-a.e.,} 
\end{align} 
where $|\cdot|$  denotes the Euclidean norm on $\R^N$. This motivates the following definition. 
\begin{defn}\label{d:cheeger-derivative}
	An operator $D \colon N^{1,2}(X) \to H$, for some finite-dimensional Hilbert space $H$, is said to be a \textit{Cheeger derivative} if it is linear, satisfies the Leibniz rule and, for each $u \in N^{1,2}(X),$
	\begin{align}\label{e:deriv-comp-upper-grad}
		\|D u \| \sim g_u  \quad \mu\mbox{-a.e.,} 
	\end{align} 
	where $\|\cdot\|$  denotes the norm on $H$ induced by the inner product.
\end{defn}

In general, Cheeger derivatives are not uniquely determined. However, on certain spaces (e.g., Euclidean and $\RCD(K,N)$ spaces), there are canonical choices of such operators. Below we define the class of Cheeger harmonic functions. Keep in mind that the collection of such functions depends on the choice of Cheeger derivative. Let us fix for the remainder of the section a Cheeger derivative $D \colon N^{1,2}(X) \to H$. We will let $\langle \cdot , \cdot \rangle$ denote the inner product on $H$ and $\|\cdot\|$ denote the norm induced by the inner product.

\begin{defn}\label{d:harmonic}
	Let $U \subseteq X$ be open. A function $u \in N_{\rm loc}^{1,2}(U)$ is said to be \textit{Cheeger harmonic} in $U$ if 
	\begin{align}\label{e:min-energy}
		\int_U \|Du\|^2 \, d\mu \leq \int_U \| D(u+\phi)\|^2 \, d\mu \quad \mbox{ for all } \phi \in N^{1,2}_0(U).
	\end{align}
	Equivalently, if
	\begin{align}\label{e:orthogonality}
		\int_U \langle Du , D\phi \rangle \, d\mu = 0 \quad \mbox{ for all } \phi \in N^{1,2}_0(U).
	\end{align}
\end{defn}

\begin{defn}
	Let $U \subseteq X$ be open and $f \in N^{1,2}(U)$. A function $u \in  N^{1,2}(U)$ is said to be a \textit{solution to the Dirichlet problem for $f$ in the weak sense of traces}, if $u$ is Cheeger harmonic in $U$ and $u = f + \phi$ for some $\phi \in N^{1,2}_0(U).$ 
\end{defn}

The existence of such solutions to the Dirichlet problem is a consequence of the following.

\begin{thm}[{\cite[Theorem 7.12]{cheeger1999differentiability}}]\label{t:existence}
	Suppose $(X,d,\mu)$ is a doubling metric measure space which supports a weak $(1,2)$-Poincar\'e inequality. For each open $U \subset X$ and $f \in N^{1,2}(U)$, there exists a solution to the Dirichlet problem for $f$ in the weak sense of traces.
\end{thm}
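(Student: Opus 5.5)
The statement to prove is Theorem \ref{t:existence}: existence of a Cheeger harmonic function $u = f + \phi$, $\phi \in N^{1,2}_0(U)$. The plan is the direct method of the calculus of variations applied to the Dirichlet energy
\begin{align}
	E(v) \coloneqq \int_U \|Dv\|^2 \, d\mu
\end{align}
over the affine class $\mathcal{K}_f \coloneqq \{ f + \phi \colon \phi \in N^{1,2}_0(U)\}$. This class is nonempty since $f \in \mathcal{K}_f$, so $m \coloneqq \inf_{\mathcal{K}_f} E \leq E(f) < \infty$. We then show the infimum is attained. A minimiser automatically satisfies \eqref{e:min-energy}, since $\mathcal{K}_f + N^{1,2}_0(U) = \mathcal{K}_f$, and hence is Cheeger harmonic.

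First, take a minimising sequence $u_k = f + \phi_k$ with $E(u_k) \to m$. By \eqref{e:deriv-comp-upper-grad} we get
\begin{align}
	\|g_{\phi_k}\|_{L^2(U)} \lesssim \|D\phi_k\|_{L^2(U)} \leq E(u_k)^{1/2} + E(f)^{1/2},
\end{align}
so the $g_{\phi_k}$ are bounded in $L^2$. We also need an $L^2$ bound on $\phi_k$ itself. When $U$ is bounded with $\mu(X\setminus U)>0$, this comes from a Poincar\'e/Sobolev inequality for functions vanishing off $U$, as in Lemma \ref{l:sobolev-poincare}. In general, I would avoid needing it by working modulo the energy: use the Hilbert space structure directly, as in the next step.

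Second, rather than rely on compactness, exploit that $H$ is a Hilbert space and $D$ is linear. The energy $E$ comes from the quadratic form $\langle Dv, Dw\rangle_{L^2(U)}$, so the parallelogram law gives
\begin{align}
	\tfrac14 E(u_k - u_j) = \tfrac12 E(u_k) + \tfrac12 E(u_j) - E\!\left(\tfrac{u_k+u_j}{2}\right) \leq \tfrac12 E(u_k)+\tfrac12 E(u_j) - m \to 0.
\end{align}
Here we used $(u_k+u_j)/2 \in \mathcal{K}_f$. Hence $D\phi_k$ is Cauchy in $L^2(U;H)$, and by \eqref{e:deriv-comp-upper-grad} so is $g_{\phi_k-\phi_j}$. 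Combined with the Sobolev inequality, this makes $\phi_k$ Cauchy in $N^{1,2}$. In the case where no such inequality is available, replace the $\phi_k$ by their $N^{1,2}$-projections orthogonal to the energy-null directions.

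Third, pass to the limit. We use that $N^{1,2}_0(U)$ is a closed subspace of the Banach space $N^{1,2}(X)$ (Shanmugalingam), so $\phi_k \to \phi \in N^{1,2}_0(U)$. We also use that $D$ is continuous, since $\|D(\phi_k-\phi)\| \lesssim g_{\phi_k-\phi}$. Then $E(f+\phi) = \lim E(u_k) = m$.

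The main obstacle is exactly this limiting step: it needs closedness of $N^{1,2}_0(U)$ together with some coercivity (an $L^2$ control of $\phi$ by its gradient). If coercivity fails, I would instead argue in the completion of $N^{1,2}_0(U)$ under the energy seminorm. There, Lax--Milgram/Riesz gives $\phi$ with
\begin{align}
	\int_U \langle D(f+\phi), D\psi\rangle\,d\mu = 0 \quad \mbox{for all } \psi \in N^{1,2}_0(U).
\end{align}
One then verifies, using the Poincar\'e inequality locally, that this $\phi$ lies in $N^{1,2}_{\rm loc}$ and in $N^{1,2}_0(U)$.
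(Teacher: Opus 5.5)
The paper gives no proof of this statement; it only quotes Cheeger. Your argument is correct in the coercive case. If $U$ is bounded and $\mu(X\setminus U)>0$, choose a ball $B\supseteq U$ with $\mu(B\setminus U)>0$. For $\phi\in N^{1,2}_0(U)$ we have $\phi=0$ a.e.\ on $B\setminus U$, so $|\langle\phi\rangle_B|^2\mu(B\setminus U)\le\int_B|\phi-\langle\phi\rangle_B|^2\,d\mu$. The $(2,2)$-Poincar\'e inequality then gives $\|\phi\|_{L^2(X)}\le C_U\|g_\phi\|_{L^2(X)}$. (Lemma \ref{l:sobolev-poincare} as stated only covers $U$ inside a ball of radius less than $\diam(X)/3$.) With this inequality, your parallelogram-law step, the closedness of $N^{1,2}_0(U)$ and \eqref{e:deriv-comp-upper-grad} give a strongly convergent minimising sequence. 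This is a neat Hilbertian substitute for the usual reflexivity and weak lower semicontinuity argument.

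The genuine gap is your fallback for general $U$. Projecting away ``energy-null directions'' achieves nothing. If $\psi\in N^{1,2}_0(U)$ has $D\psi=0$ on $U$, then $g_\psi=0$ a.e.\ on $X$, so $\psi$ is constant by the Poincar\'e inequality, hence $\psi=0$ unless $\mu(X)<\infty$ and $X\setminus U$ has zero capacity. The real obstruction is different: the energy is a norm on $N^{1,2}_0(U)$, but strictly weaker than the $N^{1,2}$-norm. So the Riesz/Lax--Milgram solution lives only in the energy completion. The local Poincar\'e inequality gives $\phi\in L^2_{\rm loc}$, but nothing gives the global bound $\phi\in L^2(X)$ needed for $\phi\in N^{1,2}_0(U)$ and $u\in N^{1,2}(U)$.

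This step cannot be repaired, because the statement fails for unbounded $U$ in general. Take $X=\R^2$, $D=\nabla$, $U=\{(x,y)\colon y>0\}$, and $f\in C^\infty_c(\R^2)$ whose trace $g$ on $\partial U$ has $\int g\neq 0$. A solution $u$ would be harmonic in $U$ with $\nabla u\in L^2(U)$ and trace $g$. Let $Pg$ be the Poisson extension. The odd reflection of $u-Pg$ is harmonic on $\R^2$ with $L^2$ gradient, hence constant, and being odd it is zero; so $u=Pg$. But $Pg(x,y)$ behaves like $\frac{y}{\pi(x^2+y^2)}\int g$ at infinity, so $Pg\notin L^2(U)$. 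The theorem therefore has to be read with $U$ bounded and $X\setminus U$ non-negligible. That is the standard hypothesis for such existence results, and the paper only needs the case where $U$ is a bounded Christ--David cube. You should impose that hypothesis explicitly and drop the claim for general $U$.
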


Cheeger harmonic functions on metric spaces satisfy a Caccioppoli inequality. The Euclidean proof (see, for example, \cite[Theorem 4.1]{giaquinta2012regularity}) works in this setting, but we include it for completeness.  

\begin{lem}\label{l:caccio-replacement}
	Suppose $(X,d,\mu)$ is a doubling metric measure space which supports a $(1,2)$-Poincar\'e inequality, $U \subseteq X$ is open, and $u \in N^{1,2}_{\rm loc}(U)$ is Cheeger harmonic in $U$. Then, for every ball $B$ such that $2B \subseteq U$ and $\lambda \in \R,$
	\begin{align}
		\int_B \|D u \|^2 \, d\mu \lesssim_{\Cdoub,\CPI} \frac{1}{r_B^2} \int_{2B} |u -\lambda|^2 \, d\mu. 
	\end{align} 
\end{lem}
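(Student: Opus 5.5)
We follow the standard Euclidean argument: test the harmonicity of $u$ against a cut-off perturbation of $u - \lambda$ and absorb. Fix a ball $B$ with $2B \subseteq U$ and $\lambda \in \R$. Choose a Lipschitz cut-off $\eta \in \mathrm{LIP}_c(2B)$ with $0 \le \eta \le 1$, $\eta \equiv 1$ on $B$, and $\mathrm{Lip}(\eta) \lesssim 1/r_B$. For example, take $\eta(y) = \min\{1, \max\{0, 2 - d(y,x_B)/r_B\}\}$, slightly shrunk so its support lies compactly in $2B$; any fixed factor like $3/2$ in place of $2$ only changes constants. Set $\phi = \eta^2(u-\lambda)$. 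Since $u \in N^{1,2}_{\rm loc}(U)$ and constants are locally Sobolev, $\phi \in N^{1,2}(X)$ and vanishes outside a compact subset of $2B \subseteq U$, so $\phi \in N^{1,2}_0(U)$ is admissible in the orthogonality relation \eqref{e:orthogonality}.

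Next we expand $D\phi$ with the linearity and Leibniz rule from Definition \ref{d:cheeger-derivative}. Since $D\lambda = 0$ (its norm is comparable to $g_\lambda = 0$), we get
\[
D\phi = \eta^2 Du + 2\eta (u-\lambda) D\eta.
\]
Plugging this into \eqref{e:orthogonality} gives
\[
\int_U \eta^2 \|Du\|^2 \, d\mu = -2\int_U \eta (u-\lambda)\langle Du, D\eta\rangle \, d\mu.
\]
We bound the right-hand side by Cauchy--Schwarz in $H$ and then Young's inequality $2ab \le \tfrac12 a^2 + 2b^2$, applied with $a = \eta\|Du\|$ and $b = |u-\lambda|\,\|D\eta\|$. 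This absorbs half of the left-hand side and yields
\[
\int \eta^2\|Du\|^2 \, d\mu \le 4\int |u-\lambda|^2 \|D\eta\|^2 \, d\mu.
\]

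It remains to control $\|D\eta\|$. By \eqref{e:deriv-comp-upper-grad}, $\|D\eta\| \lesssim g_\eta \le \mathrm{Lip}(\eta) \lesssim 1/r_B$ a.e. Moreover $D\eta$ is supported in $2B$, since $g_\eta = 0$ a.e. where $\eta$ is locally constant. Because $\eta = 1$ on $B$, we conclude
\[
\int_B \|Du\|^2 \, d\mu \lesssim r_B^{-2}\int_{2B}|u-\lambda|^2 \, d\mu,
\]
with the constant coming only from the comparability in \eqref{e:deriv-comp-upper-grad}, which depends on the structural constants.

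The main obstacle is technical rather than conceptual: we must justify that $\phi$ is a legitimate test function in $N^{1,2}_0(U)$ and that the Leibniz rule applies to the product of $\eta^2 \in \mathrm{LIP}_c$ with the merely locally Sobolev function $u - \lambda$. To handle this, first note that $\eta(u-\lambda) \in N^{1,2}(X)$ by the definition of $N^{1,2}_{\rm loc}(U)$, then multiply once more by the bounded Lipschitz $\eta$. We also need the integrals to be finite; this holds since everything is localized to the support of $\eta$, where $u \in N^{1,2}$.
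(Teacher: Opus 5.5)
Your proposal is correct and follows the paper's proof: both test the orthogonality relation \eqref{e:orthogonality} with $\phi = \eta^2(u-\lambda)$, expand with the Leibniz rule, and control the cross term by Cauchy--Schwarz together with $\|D\eta\| \lesssim 1/r_B$. The only differences are cosmetic. You absorb with Young's inequality, whereas the paper applies H\"older and divides by $(\int \eta^2\|Du\|^2\,d\mu)^{1/2}$. You also spell out why $\phi \in N^{1,2}_0(U)$ and why the integrals are finite, which the paper leaves implicit.
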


\begin{proof}
	Without loss of generality, we may assume $r_B =1.$ Let $\eta \colon X \to [0,1]$ be a Lipschitz function such that $\eta \equiv 1$ on $B$, $\eta \equiv 0$ on $X\setminus 2B$, and $|D\eta| \lesssim 1.$ Such a function is easy to construct by the McShane-Whitney extension theorem (the absolute value of the derivative of a Lipschitz function is bounded by the Lipschitz constant). Setting $\phi = (u - \lambda)\eta^2$ in \eqref{e:orthogonality} and applying the Leibniz rule gives
	\begin{align}
		\int_U \|Du\|^2 \eta^2 \, d\mu + \int_U 2  (u-\lambda)\eta \langle Du , D\eta \rangle  \, d\mu =0.
	\end{align}
	Using the fact that $\supp(\eta) \subseteq 2B \subseteq U$ and applying H\"older's inequality, we get 
	\begin{align}
		\int_{2B} \|Du\|^2 \eta^2 \, d\mu &\lesssim \int_{2B} |u-\lambda| \eta \|Du\| \|D\eta\| \, d\mu \\
		&\lesssim \left( \int_{2B} \|Du\|^2 \eta^2 \, d\mu \right)^\frac{1}{2}  \left( \int_{2B} \|D\eta\|^2|u-\lambda|^2  \, d\mu \right)^\frac{1}{2} .
	\end{align}
	Thus, since $\eta \equiv 1$ on $B$, 
	\begin{align}
		\int_B \|Du\|^2 \, d\mu \leq \int_{2B}  \|Du\|^2 \eta^2 \, d\mu \lesssim  \int_{2B} \|D\eta\|^2|u-\lambda|^2  \, d\mu \lesssim  \int_{2B} |u-\lambda|^2  \, d\mu,
	\end{align}
	as required. 
\end{proof}

\addtocontents{toc}{\SkipTocEntry}
\subsection*{RCD spaces} 
In this section, we cover the main properties of RCD spaces needed in the proofs of Theorem \ref{t:RCD} and Theorem \ref{t:RCD-2}. As mentioned in the introduction, we direct the reader to the surveys \cite{ambrosio2018calculus,gigli2023giorgi} and references therein for a thorough introduction to the theory (of which we barely scratch the surface).

  The main feature of RCD spaces that enables us to prove Theorem \ref{t:RCD} is the existence of a Hessian (acting on sufficiently smooth functions), developed in \cite{gigli2018nonsmooth}. We utilize the fact that, when scaled correctly, the local $L^2$-norm of $|\myHess(f)|$ is comparable to the local $L^2$-oscillation of $|\nabla f|,$ see Corollary \ref{c:second-order-caccio} and Lemma \ref{l:PI-RCD}. Before getting to this, we give a brief overview of the gradient ($\nabla$), Laplacian ($\Delta$) and Hessian ($\myHess$) on RCD spaces, and their relevant properties. Since we will not work directly with the definition, we will skip some of the details. See \cite{gigli2018nonsmooth} for the full details.

  \subsubsection*{Sobolev spaces} We are interested in how the aforementioned objects act on Sobolev functions. The Sobolev space utilised in \cite{gigli2018nonsmooth}, denoted there by $W^{1,2}(X)$, is the one developed in \cite{ambrosio2014calculus} and is stated in terms of \textit{test plans}. Similarly to the Newtonian spaces, associated to each $f \in W^{1,2}(X)$ is a \textit{minimal weak upper gradient} denoted by $|Df|_w$. It turns out that the space $W^{1,2}(X)$ coincides with the Newtonian space in the following sense (see \cite{erikssonbique2024curvewise}):
  \begin{enumerate}
  	\item  If $f\in N^{1,2}(X),$ then $f \in W^{1,2}(X)$ and $g_f = |Df|_w$ almost everywhere.
  	\item If $f \in W^{1,2}(X)$, then $f$ has a Borel representative $\overline{f} \in N^{1,2}(X)$ with $g_{\overline{f}} =  |Df|_w$ almost everywhere.
  \end{enumerate}
  Thus, when quoting the various necessary results from the RCD literature, we will equivalently state things in terms of $N^{1,2}(X)$.

\subsubsection*{Gradient operator} A defining feature of RCD spaces is the assumption that $N^{1,2}(X)$ is a Hilbert space. This condition, referred to in \cite{gigli2018nonsmooth} as \textit{infinitesimally Hilbertian}, was introduced in \cite{gigli2015differential}. Assuming this, there is a unique pair $(L^0(TX),\nabla)$, where $L^0(TX) $ is an $L^0$-normed module (see \cite{gigli2018nonsmooth}) and $\nabla \colon N^{1,2}(X) \to L^0(TX)$, called the \textit{gradient operator}, is a linear and continuous map such that 
\begin{align}\label{e:comp-upper-grad}
	|\nabla f | = g_f \quad \mbox{almost everywhere}.
\end{align}
The gradient operator satisfies the Leibniz rule, chain rule, and the locality property, meaning,
\begin{align}
	\nabla f = \nabla g \mbox{ almost everywhere in }  \{f=g\}.
\end{align}
Furthermore, the assumption of infinitesimally Hilbertian implies that $L^0(TX)$ is a \textit{Hilbert module} and can be equipped with a canonical inner product $\langle \cdot , \cdot \rangle.$ In particular, the gradient operator $\nabla$ defines a Cheeger derivative in the sense of Definition \ref{d:cheeger-derivative}.

\subsubsection*{Laplace operator and harmonic functions} Let $U \subseteq X$ be open and let $D(\Delta,U)$ denote the space of all functions $f \in N^{1,2}_{\rm loc}(U)$ for which there exists $h \in L^2(U)$ satisfying
\begin{align}\label{e:harmonics}
	\int hg \, d\mu = - \int \langle \nabla f , \nabla g \rangle \, d\mu  \quad \mbox{for all $g \in N^{1,2}_0(U).$}
\end{align}
The function $h$ is called the \textit{Laplacian of $f$} and is denoted by $\Delta f.$ We will set $D(\Delta) \coloneqq D(\Delta,X)$. An element $f \in D(\Delta, U)$ is said to be \textit{harmonic in $U$} if $\Delta f = 0$. Note that this is equivalent to saying that $f$ is Cheeger harmonic in $U$ (Definition \ref{d:harmonic}) with respect to the differential $\nabla$. 

\begin{rem}
	Since $|\nabla u| = g_u$ $\mu$-a.e. (see \cite{ambrosio2013density}), being harmonic in the above sense is the same as being harmonic in the sense considered in the book \cite{bjorn2011nonlinear}. 
	In the case that $X = \R^n,$ it follows from \eqref{e:comp-upper-grad} that there exists a linear isometry $T$ from $L^0(TX)$ to the space of vector fields on $\R^n$ such that $\nabla_{\text{Euc}} f =T(\nabla f)$ for each $f \in N^{1,2}(\R^n)$, see \cite[Remark 2.2.4]{gigli2018nonsmooth}. In particular, the space of harmonic functions described above coincides with the usual space of harmonic functions in $\R^n.$
\end{rem}

\subsubsection*{The global Hessian} The notion of Hessian in RCD spaces was developed in \cite{gigli2018nonsmooth}. The definition is rather involved and, as such, we refer to the aforementioned paper for the full details. We provide a summary below. \\

The space of test functions on $X$ is 
\begin{align}
	\mbox{Test}(X) \coloneqq \{f \in L^\infty(\mu) \cap \mbox{LIP}(X) \cap D(\Delta)  :  \Delta f \in N^{1,2}(X)\}. 
\end{align}
For each $f \in \mbox{Test}(X),$ there is a mapping $\myHess(f) \colon [L^0(TX)]^2 \to L^0(\mu)$ which is bilinear, symmetric and continuous.  This induces a continuous linear map $\myHess(f) \colon L^0(TX) \to L^0(TX)$ via the formula 
\begin{align}
	\langle \myHess(f)(v),w \rangle = \myHess(f)(v,w), \quad w \in L^0(TX). 
\end{align}
It follows from \cite[Equation 3.3.34]{gigli2018nonsmooth} that 
\begin{align}\label{e:hess-in-grad}
	2\myHess(f)(\nabla f) = \nabla | \nabla f |^2 = 2|\nabla f| \nabla |\nabla f | .
\end{align}
Furthermore (see \cite[Section 3.2]{gigli2018nonsmooth}), there is a function $|\cdot|_\HS \colon L^0(TX) \to L^0(\mu)$ with the property that, for $\mu$-a.e. $x \in X$, 
\begin{align}\label{e:norm}
	| \cdot |_\HS(x) \mbox{ defines a norm on } L^0(TX). 
\end{align} 
The function $|\cdot|_\HS$ is called the \textit{Hilbert-Schmidt norm}. It satisfies 
\begin{align}\label{e:HS}
	| \myHess(f)(v) | \leq | \myHess(f)|_\HS |v| , \quad v \in L^0(TX)
\end{align}
Finally, the mapping
\begin{align}
	f \mapsto \myHess(f) \mbox{ is linear}. 
\end{align}

\subsubsection*{The local Hessian} In general, we would like access to a Hessian for functions which are \textit{locally in $\Test(X)$}. One can make this precise as follows. For an open set $\Omega \subseteq X$, let 
\begin{align}
	\text{Test}_{\text{loc}}(\Omega) = \{f \in \text{LIP}_{\text{loc}}(\Omega) \cap D(\Delta,\Omega) \colon \Delta f \in N^{1,2}_{\text{loc}}(\Omega)\}. 
\end{align}
If $\eta \in \text{Test}(X)$ with $\supp(\eta) \subset \subset \Omega,$ and $u \in \text{Test}_{\text{loc}}(\Omega)$ then $\eta u \in \text{Test}(X)$. Thus, for any $f \in \text{Test}_{\text{loc}}(\Omega)$ we can define a function $|\myHess(f)|_{\HS} \in L^2_{\text{loc}}(\Omega)$ as 
\begin{align}\label{e:def-local-Hess}
	|\myHess(f)|_\HS \coloneqq |\myHess(\eta f)|_\HS, \ \mu\mbox{-a.e. in } \Omega'
\end{align}
for every $\eta \in \text{Test}(X)$ with compact support in $\Omega$ such that $\eta =1$ on $\Omega' \subset \subset \Omega.$ This is well-defined due to the locality property of the Hessian.

\begin{lem}\label{l:reverse-triangle-inequality}
	Let $\Omega \subseteq X$. For each $f,g \in \Test_{\rm{loc}}(\Omega)$ we have 
	\begin{align}\label{e:sub-add}
		\left| |\myHess(f)|_\HS - |\myHess(g)|_\HS  \right| \leq | \myHess(f-g) |_\HS \quad \mu\mbox{-a.e. in } \Omega. 
	\end{align}
\end{lem}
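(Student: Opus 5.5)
The plan is to reduce the statement to the pointwise triangle inequality for the Hilbert--Schmidt norm, using the linearity of $f \mapsto \myHess(f)$ and the fact (recorded in \eqref{e:norm}) that $|\cdot|_\HS(x)$ is a norm for $\mu$-a.e.\ $x$. Everything is local, so I will first localise with cutoffs from $\Test(X)$ and then argue almost everywhere.

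First, I would fix $\Omega' \subset\subset \Omega$ and choose $\eta \in \Test(X)$ with compact support in $\Omega$ and $\eta = 1$ on $\Omega'$. Such cutoffs exist in $\RCD(K,N)$ spaces, and their existence is already implicit in the well-definedness of \eqref{e:def-local-Hess}. Since $f,g \in \Test_{\rm loc}(\Omega)$ and $\Test_{\rm loc}(\Omega)$ is a vector space (its defining conditions are linear), $f-g \in \Test_{\rm loc}(\Omega)$ as well. Hence $\eta f$, $\eta g$ and $\eta(f-g) = \eta f - \eta g$ all lie in $\Test(X)$. By \eqref{e:def-local-Hess}, $\mu$-a.e.\ in $\Omega'$ we have $|\myHess(f)|_\HS = |\myHess(\eta f)|_\HS$, $|\myHess(g)|_\HS = |\myHess(\eta g)|_\HS$ and $|\myHess(f-g)|_\HS = |\myHess(\eta f - \eta g)|_\HS$.

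Second, by linearity of the global Hessian on $\Test(X)$, $\myHess(\eta f) = \myHess(\eta f - \eta g) + \myHess(\eta g)$. Since $|\cdot|_\HS(x)$ is a norm for $\mu$-a.e.\ $x$, the triangle inequality gives, $\mu$-a.e.,
\[ |\myHess(\eta f)|_\HS \le |\myHess(\eta f - \eta g)|_\HS + |\myHess(\eta g)|_\HS . \]
Exchanging the roles of $f$ and $g$, and using that $|\myHess(\eta g - \eta f)|_\HS = |\myHess(\eta f - \eta g)|_\HS$ (homogeneity of the norm), yields the reverse inequality. Together these give \eqref{e:sub-add} $\mu$-a.e.\ in $\Omega'$.

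Finally, I would exhaust $\Omega$ by countably many $\Omega'_k \subset\subset \Omega$, for instance $\Omega'_k = \{x \in B(x_0,k) : d(x, X\setminus \Omega) > 1/k\}$, noting these have compact closure in $\Omega$ since $X$ is proper. A countable union of null sets is null, so \eqref{e:sub-add} holds $\mu$-a.e.\ in $\Omega$.

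The only delicate point is the meaning of "$|\cdot|_\HS$ is a norm": the Hilbert--Schmidt norm acts on the Hessian as a tensor (a bilinear map), not on $L^0(TX)$ as written. I will interpret \eqref{e:norm} as saying that the pointwise Hilbert--Schmidt norm is a norm on the relevant tensor module, so that it is subadditive and absolutely homogeneous $\mu$-a.e. If needed, I would justify this from \cite[Section 3.2]{gigli2018nonsmooth}, where $|\cdot|_\HS$ is the pointwise norm of the Hilbert module $L^0(T^{\otimes 2}X)$.
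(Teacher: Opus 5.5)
Your proof is correct and is essentially the paper's argument: localise to $\Omega' \subset\subset \Omega$ with a cutoff $\eta \in \Test(X)$, use \eqref{e:def-local-Hess} together with linearity of $h \mapsto \myHess(h)$, and apply the reverse triangle inequality for the pointwise Hilbert--Schmidt norm. Two of your steps are careful versions of things the paper leaves implicit: the explicit countable exhaustion of $\Omega$ (using properness of $X$), which the paper covers with ``since $\Omega'$ was arbitrary'', and your reading of \eqref{e:norm} as a pointwise norm on the tensor module rather than on $L^0(TX)$.
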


\begin{proof}
	Let $\Omega' \subset \subset \Omega$ and fix some $\eta \in \Test(X)$ with compact support in $\Omega$ which satisfies $\eta = 1$ on $\Omega'$. It follows from \eqref{e:norm}, \eqref{e:def-local-Hess}, the reverse triangle inequality and the linearity of the map $h \mapsto \myHess(h)$ that 
	\begin{align}
		\left| |\myHess(f)|_\HS - |\myHess(g)|_\HS  \right|  &= \left| |\myHess(\eta f)|_\HS - |\myHess(\eta g)|_\HS  \right|  \\
		&\leq | \myHess(\eta(f-g)) |_\HS = | \myHess(f-g) |_\HS
	\end{align}
	$\mu$-a.e. in $\Omega'$. Since $\Omega'$ was arbitrary, this completes the proof. 
\end{proof}

\subsubsection*{Main results} We now have everything needed to state the main results of this section. Let us fix a metric measure space $(X,d,\mu)$ which is $\RCD(K,N)$ for some $K \in \R$ and $N \in (1,\infty)$. 

\begin{rem}
	In general, the dependencies in the following results are going to vary depending on whether $K \geq 0$ or $K < 0$. To encode this dichotomy more efficiently, let us define 
	\begin{align}\label{e:cRCD(X)}
		\cRCD(X) = 
		\begin{cases}
			N & \mbox{ if } K \geq 0 ;\\
			(K,N,\diam(X)) & \mbox{ if } K < 0. 
		\end{cases}
	\end{align}
\end{rem}

It follows from the Bishop–Gromov
inequality in ${\rm CD}(K,N)$ \cite[Theorem 2.3]{sturm2006II} that
\begin{align}
	\mbox{$(X,d,\mu)$ is doubling with constant depending only on $\cRCD(X)$. }
\end{align}
It also supports a $(1,2)$-Poincar\'e inequality. 

\begin{thm}[{\cite{rajala2012interpolated,rajala2012local}}]\label{t:RCD-PI}
	The space $(X,d,\mu)$ supports a weak $(1,2)$-Poincar\'e inequality with Poincar\'e constant $\CPI$ depending only $\cRCD(X)$ and dilatation constant given by $\API = 2$. 
\end{thm}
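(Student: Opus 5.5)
The plan is to reduce the statement to a (1,1)-Poincar\'e inequality with dilation constant $2$ and then apply H\"older's inequality: since $\dashint_{2B} g_u \, d\mu \leq (\dashint_{2B} g_u^2\, d\mu)^{1/2}$, a weak $(1,1)$-Poincar\'e inequality with $\API = 2$ immediately gives the weak $(1,2)$ version with the same constants. For the $(1,1)$ inequality I would follow the strategy of \cite{rajala2012interpolated,rajala2012local}. I would construct, for a ball $B = B(x,r)$, a probability measure $\Pi$ on geodesics such that its endpoint marginals are the normalised restrictions $\mu|_B/\mu(B)$ (in product form, pairing $y,z \in B$). The key property is that its time marginals $(e_t)_\#\Pi$ have densities bounded by $C(\cRCD(X))/\mu(B)$ for all $t \in [0,1]$. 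Since all geodesics between points of $B$ stay in $2B$, these intermediate measures are supported in $2B$.

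Given such a $\Pi$, the Poincar\'e inequality follows by the standard curve argument. Along $\Pi$-a.e.\ geodesic $\gamma$ (after checking that $\Pi$ charges no family of curves of zero $2$-modulus, which follows from the density bound), we have $|u(\gamma_0) - u(\gamma_1)| \leq \int_0^1 g_u(\gamma_t)|\dot\gamma_t|\,dt \leq 2r \int_0^1 g_u(\gamma_t)\,dt$. Integrating against $\Pi$ and using Fubini together with the density bound gives
\begin{align}
\dashint_B \dashint_B |u(y) - u(z)| \, d\mu(y)\, d\mu(z) \lesssim r \int_0^1 \int g_u \, d(e_t)_\#\Pi \, dt \lesssim_{\cRCD(X)} r \dashint_{2B} g_u \, d\mu,
\end{align}
where we used doubling to compare $\mu(B)$ with $\mu(2B)$. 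The left-hand side dominates $\dashint_B |u - \langle u\rangle_B|\,d\mu$, which is the desired inequality.

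The main obstacle is the construction of $\Pi$ with uniform density bounds at intermediate times. Here I would use the ${\rm CD}(K,N)$ condition, which holds for $\RCD(K,N)$ spaces. The $L^\infty$ density bound is not directly given by displacement convexity of the R\'enyi entropy, so I would follow Rajala's approach. First, for a fixed intermediate time, minimise a suitable penalised functional (e.g.\ the $L^\infty$ norm of the density, or an entropy with a large exponent) among all midpoints of Wasserstein geodesics. The ${\rm CD}(K,N)$ inequality, combined with a localisation/variational argument, then shows that the minimiser has density at most $C(K,N,r)\max\{\|\rho_0\|_\infty,\|\rho_1\|_\infty\}$. Iterating over dyadic times and taking a limit produces a geodesic $\Pi$ with bounded densities for all $t$.

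The dependence on $K<0$ enters through the distortion coefficients $\tau_{K,N}$, which are uniformly controlled for $r \leq \diam(X)$. This yields constants depending only on $\cRCD(X)$; when $K \geq 0$ the constants depend only on $N$. To handle pairs of points in the product $B \times B$ rather than a single transport, I would apply the construction to $\delta_y$-averaged families, or equivalently to the measures $\mu|_B$ transported to themselves via the product coupling decomposed through measurable selection. Since the actual work is in \cite{rajala2012interpolated,rajala2012local}, I would cite these results for this step.
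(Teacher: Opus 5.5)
\emph{Verdict: there is a genuine gap. The plan $\Pi$ your argument rests on is not what the cited bounded-density result supplies, and your proposed bridges do not produce it.}

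Your H\"older reduction from $(1,1)$ to $(1,2)$ is fine. So is the remark that geodesics joining points of $B$ stay in $2B$, and so is the curve argument \emph{given} a plan $\Pi$ with the stated properties. The gap is the existence of that plan.

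\begin{itemize}
\item What \cite{rajala2012interpolated,rajala2012local} provide is a bounded-density \emph{Wasserstein geodesic}. For $\mu_0,\mu_1$ with bounded densities and bounded supports, this is a lift of an \emph{optimal} coupling with $\|\rho_t\|_\infty\le C\max\{\|\rho_0\|_\infty,\|\rho_1\|_\infty\}$.
\item It says nothing about the product coupling $\mu_B\otimes\mu_B$ that your left-hand side requires. Applied to $\mu_0=\mu_1=\mu|_B/\mu(B)$, the optimal self-coupling is the diagonal, so the geodesic is constant and you only get the trivial inequality.
\item Decomposing $\mu_B\otimes\mu_B=\int\delta_y\otimes\mu_B\,d\mu_B(y)$ does give optimal pieces. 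But $\delta_y$ has no bounded density, so the bound above is vacuous.
\item What you do have for $\delta_y\to\mu_B$ is a measure-contraction bound $(e_t)_\#\Pi^y\lesssim t^{-N}\mu/\mu(B)$. This is not integrable as $t\to0$, and averaging in $y$ does not by itself improve it.
\item To control $t\in[0,\tfrac12]$ you would have to regard the \emph{same} geodesics as a contraction toward the other endpoint $z$. That consistency is a non-branching property. It is what the Lott--Villani democratic argument uses, and what the bounded-density construction is built to do without.
\end{itemize}
So the step you defer to the literature is not the step the literature proves.

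The missing idea is to drop product couplings altogether; this is the standard way, and essentially the cited one, to turn bounded-density geodesics into a Poincar\'e inequality. Put $c=\langle u\rangle_B$, $A_+=\{u>c\}\cap B$ and $A_-=B\setminus A_+$. If $\mu(A_+)=0$ then $u=c$ a.e.\ on $B$ and there is nothing to prove, so assume both sets have positive measure. Let $\Pi$ be a bounded-density geodesic from $\mu|_{A_+}/\mu(A_+)$ to $\mu|_{A_-}/\mu(A_-)$. Since $\int_{A_+}(u-c)\,d\mu=\int_{A_-}(c-u)\,d\mu=\frac12\int_B|u-c|\,d\mu$, we get
\begin{align*}
\frac12\Big(\frac{1}{\mu(A_+)}+\frac{1}{\mu(A_-)}\Big)\int_B|u-c|\,d\mu&=\int\big(u(\gamma_0)-u(\gamma_1)\big)\,d\Pi\\
&\le 2r\,C\max\Big\{\frac{1}{\mu(A_+)},\frac{1}{\mu(A_-)}\Big\}\int_{2B}g_u\,d\mu.
\end{align*}
Hence $\int_B|u-\langle u\rangle_B|\,d\mu\le 4Cr\int_{2B}g_u\,d\mu$. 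Doubling and H\"older then give the statement with $\API=2$ and $\CPI$ depending only on $\cRCD(X)$.

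If you prefer to keep your product plan, you must add the missing ingredient explicitly. $\RCD(K,N)$ spaces are essentially non-branching, and $\mu\otimes\mu$-a.e.\ pair of points is joined by a unique geodesic. The lift of $\mu_B\otimes\mu_B$ along these geodesics is then a contraction toward $y$ for $t\ge\frac12$ and toward $z$ for $t\le\frac12$. The measure contraction property then gives the uniform bound $(e_t)_\#\Pi\lesssim_{\cRCD(X)}\mu/\mu(B)$; this is the segment-inequality route.
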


The next result guarantees the existence of bump functions with bounds on the gradient and Laplacian. 

\begin{prop}[{\cite[Lemma 3.1]{mondino2019structure}}]\label{l:cut-off}
	For every $x \in X$, $R > 0$ and $0 < r < R$, there exists a Lipschitz function $\psi \colon X \to \R$ such that 
	\begin{enumerate}
		\item $0 \leq \psi \leq 1$ on $X$, $\psi \equiv 1$ on $B(x,r)$ and $\supp(\psi) \subset B(x,2r)$; 
		\item $r^2|\Delta \psi| + r| \nabla \psi| \lesssim_{\cRCD(X)} 1.$ 
	\end{enumerate}
	
\end{prop}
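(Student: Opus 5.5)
The plan is to follow the heat-flow regularisation argument of \cite{mondino2019structure}. It has three stages: start from a Lipschitz cut-off, smooth it by running the heat flow for a short time, and then compose with a fixed smooth function of one variable so that the resulting function equals $1$ and $0$ exactly where it should. First, I rescale. Replacing $d$ by $d/r$ turns $X$ into an $\RCD(r^2K,N)$ space. When $K\ge 0$ the curvature parameter stays nonnegative. When $K<0$ we have $r<R\lesssim\diam(X)$, so $|r^2K|\le K^2$-type bounds hold with constants controlled by $\cRCD(X)$. In either case it suffices to take $r=1$ and prove $|\Delta\psi|+|\nabla\psi|\lesssim 1$, with constants depending only on $\cRCD(X)$.

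Let $\varphi$ be the McShane--Whitney Lipschitz function with $\varphi\equiv 1$ on $B(x,5/4)$, $\varphi\equiv 0$ outside $B(x,7/4)$, $0\le\varphi\le1$ and $\mathrm{Lip}(\varphi)\le 4$. Set $u_t = h_t\varphi$, where $h_t$ is the heat semigroup, and fix a small time $t=t_0(\cRCD(X))$ to be chosen. Two standard $\RCD$ estimates control $u_{t_0}$.
\begin{enumerate}
\item The Bakry--\'Emery gradient estimate $|\nabla h_t\varphi|\le e^{-Kt}h_t|\nabla\varphi|$ gives $|\nabla u_{t_0}|\lesssim 1$.
\item The Gaussian upper bounds on $\partial_t p_t$ (Jiang--Li--Zhang), together with doubling, give the $L^\infty$ bound $\|\Delta h_t\varphi\|_\infty\lesssim t^{-1}\|\varphi\|_\infty$, hence $|\Delta u_{t_0}|\lesssim t_0^{-1}$.
\end{enumerate}
Moreover $u_{t_0}\in\Test(X)$ by the usual regularisation properties of the heat flow.

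The main obstacle is localisation. I need $u_{t_0}\ge 3/4$ on $B(x,1)$ and $u_{t_0}\le 1/4$ outside $B(x,2)$. For this I use the two-sided Gaussian heat kernel bounds of Sturm and Jiang--Li--Zhang, $p_t(y,z)\lesssim \mu(B(y,\sqrt t))^{-1}e^{-d(y,z)^2/(5t)}$, combined with doubling.
\begin{enumerate}
\item For $y\in B(x,1)$, stochastic completeness gives $1-u_t(y)=h_t(1-\varphi)(y)=\int_{X\setminus B(x,5/4)}p_t(y,z)\,d\mu(z)$. The integrand lives where $d(y,z)\ge 1/4$, so summing the Gaussian bound over dyadic annuli and using doubling shows this is $\lesssim e^{-c/t}$.
\item For $y\notin B(x,2)$, similarly $u_t(y)\le\int_{B(x,7/4)}p_t(y,z)\,d\mu(z)\lesssim e^{-c/t}$, since $d(y,z)\ge 1/4$ on the domain of integration.
\end{enumerate}
The remaining care is to make sure that the volume ratios produced by doubling are absorbed by the Gaussian decay uniformly in $\cRCD(X)$. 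Granting this, choosing $t_0$ small enough makes both quantities at most $1/4$.

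Finally, fix $F\in C^\infty(\R)$ with $F=0$ on $(-\infty,1/4]$, $F=1$ on $[3/4,\infty)$ and $0\le F\le 1$, and set $\psi=F\circ u_{t_0}$. Then $\psi\equiv1$ on $B(x,1)$, $\psi$ vanishes outside $B(x,2)$, and $\psi$ is Lipschitz. By the chain rules for the gradient and the Laplacian on $\RCD$ spaces,
\begin{align}
|\nabla\psi|&=|F'(u_{t_0})|\,|\nabla u_{t_0}|\lesssim 1,\\
\Delta\psi&=F'(u_{t_0})\Delta u_{t_0}+F''(u_{t_0})|\nabla u_{t_0}|^2,
\end{align}
and both terms on the right are bounded by the estimates above. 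Undoing the rescaling gives $r^2|\Delta\psi|+r|\nabla\psi|\lesssim_{\cRCD(X)}1$.
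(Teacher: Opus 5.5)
The paper does not prove this proposition; it imports it from \cite[Lemma 3.1]{mondino2019structure}. Your argument is the standard heat-flow construction of such cut-offs: regularise a Lipschitz cut-off by the heat flow, use Bakry--\'Emery for the gradient, an $O(t^{-1})$ sup-bound for the Laplacian, Gaussian bounds for localisation, and compose with a smooth step function. It is correct in substance.

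One variant is worth knowing. Instead of the Jiang--Li--Zhang bounds on $\partial_t p_t$, you can mollify in time, which is the usual trick in the $\RCD$ literature. Set $\tilde u=\epsilon^{-1}\int_0^\infty\kappa(s/\epsilon)\,h_s\varphi\,ds$ with $\kappa\in C_c^\infty((0,\infty))$ and $\int\kappa=1$. Integrating by parts in $s$ gives $\|\Delta\tilde u\|_\infty\le\|\kappa'\|_{L^1}\|\varphi\|_\infty/\epsilon$, using only that $h_s$ is a contraction on $L^\infty$. The gradient and localisation steps go through unchanged.

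Three details need fixing.

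First, your rescaling remark for $K<0$ is wrong as written. $R$ is arbitrary, so $r<R$ gives no upper bound on $r$. Instead, if $r\ge\diam(X)$, take $\psi\equiv 1$; then $B(x,2r)=X$ and everything is trivial. If $r<\diam(X)$, note that $(X,d/r,\mu)$ is $\RCD(-\diam(X)^2|K|,N)$, with the same doubling constant $\Cdoub$ as $X$ because doubling is scale invariant. Hence the heat kernel bounds for $t\le 1$ are uniform in $r$. This also settles your ``remaining care'': on dyadic annuli the ratios $\mu(B(y,2^j))/\mu(B(y,\sqrt t))$ are at most $\Cdoub(2^j/\sqrt t)^{\log_2\Cdoub}$, which the Gaussian factor absorbs into $e^{-c/t}$.

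Second, $\supp(\psi)$ is closed, so $u_{t_0}\le 1/4$ off $B(x,2)$ is not quite enough. Make $u_{t_0}$ strictly smaller, say $u_{t_0}\le 1/8$ on $X\setminus B(x,2)$. Continuity of $u_{t_0}$ then gives $\supp(\psi)\subseteq\{u_{t_0}\ge 1/4\}\subseteq B(x,2)$.

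Third, the integrand in your formula for $1-u_t(y)$ should carry the factor $1-\varphi(z)$. Dropping it turns the identity into an inequality, which is all you actually use.
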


The space also supports a Bochner-type inequality, from which we will derive a Caccioppoli-type estimate for the Hessian of a harmonic function (Corollary \ref{c:second-order-caccio}). 

\begin{thm}[{\cite[Theorem 3.3.8]{gigli2018nonsmooth}}]\label{t:bochner-type}
	Given $u \in \Test(X)$, it holds that 
	\begin{align}
		\int_X -\frac{1}{2} \langle \nabla |\nabla u|^2 , \nabla \phi \rangle \, d\mu \geq \int_X \left( |\myHess(u)|_{\HS}^2  + \langle \nabla u , \nabla \Delta u \rangle  + K|\nabla u|^2 \right) \phi \, d\mu 
	\end{align}
	for all $\phi \in L^\infty(X) \cap N^{1,2}(X).$ 
\end{thm}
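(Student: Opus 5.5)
The statement is quoted from \cite[Theorem 3.3.8]{gigli2018nonsmooth}, and the plan is to reconstruct Gigli's argument. It starts from the weak Bakry--\'Emery inequality that the $\RCD(K,N)$ condition provides. The $\RCD(K,\infty)$ part of the condition, via the equivalence of Ambrosio--Gigli--Savar\'e between $\RCD(K,\infty)$ and the Bakry--\'Emery condition $\mathrm{BE}(K,\infty)$, gives for every $u \in \Test(X)$ and every non-negative $\phi \in L^\infty(X)\cap N^{1,2}(X)$
\begin{align}
	\int_X -\frac{1}{2}\langle \nabla |\nabla u|^2 , \nabla \phi\rangle \, d\mu \geq \int_X \left(\langle \nabla u , \nabla \Delta u\rangle + K|\nabla u|^2\right)\phi\, d\mu .
\end{align}
This is the desired inequality without the Hessian term, and only $\phi \geq 0$ is ever used, as in the Bakry--\'Emery formulation. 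The whole task is to upgrade it by the non-negative quantity $\int|\myHess(u)|_\HS^2\phi\,d\mu$. A preliminary step is the self-improvement of Savar\'e: the measure-valued operator
\begin{align}
	\mathbf{\Gamma}_2(u) := \tfrac12\boldsymbol{\Delta}|\nabla u|^2 - \langle \nabla u,\nabla\Delta u\rangle\mu
\end{align}
satisfies $\mathbf{\Gamma}_2(u) - K|\nabla u|^2\mu \geq 0$. Moreover, $|\nabla u|^2 \in N^{1,2}(X)$ with $|\nabla|\nabla u|^2|^2 \le 4(\Gamma_2(u)-K|\nabla u|^2)|\nabla u|^2$, where $\Gamma_2(u)$ denotes the density of the absolutely continuous part of $\mathbf{\Gamma}_2(u)$.

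Next I will apply the $\Gamma_2$-inequality not to a single test function but to polynomial combinations $\Phi(u_1,\dots,u_n)$ of test functions. Expanding $\mathbf{\Gamma}_2(\Phi(u_1,\ldots,u_n))$ with the chain and Leibniz rules produces a quadratic form in the first and second derivatives of $\Phi$. Its coefficients involve $\langle \nabla u_i,\nabla u_j\rangle$ and the candidate Hessian
\begin{align}
	H[u_i](u_j,u_k) := \tfrac12\left(\langle\nabla\langle\nabla u_i,\nabla u_j\rangle,\nabla u_k\rangle + \langle\nabla\langle\nabla u_i,\nabla u_k\rangle,\nabla u_j\rangle - \langle\nabla u_i,\nabla\langle\nabla u_j,\nabla u_k\rangle\rangle\right).
\end{align}
Non-negativity of $\mathbf{\Gamma}_2 - K\Gamma$ for all choices of $\Phi$, in particular $\Phi(u,u_1,\dots,u_n)= u + \sum_j\lambda_j u_j + \tfrac12\sum_{j,k}\mu_{jk}u_ju_k$, gives a pointwise inequality for the density. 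Optimizing over the coefficients $\lambda_j,\mu_{jk}$ pointwise (made rigorous by taking countable dense sets of coefficients and localizing with Borel partitions) yields
\begin{align}
	\sum_{j,k} \left|H[u](u_j,u_k)\right|^2 \le \Gamma_2(u) - K|\nabla u|^2
\end{align}
whenever the $\nabla u_j$ are orthonormal on a Borel set. Summing over a local orthonormal frame of the Hilbert module $L^0(TX)$ identifies the left side with $|\myHess(u)|_\HS^2$. Here $\myHess(u)$ is defined as the bilinear map extending $H[u]$ by continuity, which is exactly Gigli's construction.

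Finally, I integrate against $\phi \ge 0$, using that the singular part of $\mathbf{\Gamma}_2(u)-K|\nabla u|^2\mu$ is non-negative, so it can be dropped. Unwinding the definition of $\mathbf{\Gamma}_2$ then gives the stated inequality for all non-negative $\phi\in L^\infty\cap N^{1,2}$. The main obstacle is the optimization step: showing that the pointwise quadratic-form inequality really controls the full Hilbert--Schmidt norm. This requires a measurable choice of local orthonormal frames made of gradients of test functions, and a density argument showing that gradients of test functions generate $L^0(TX)$. I would first handle this on sets where finitely many $\nabla u_j$ form an orthonormal basis, and then exhaust $X$ up to null sets using the module's local dimension decomposition.
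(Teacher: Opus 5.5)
Your overall route is the one behind the result the paper quotes from \cite{gigli2018nonsmooth}: Savar\'e's self-improvement, the multivariate $\Gamma_2$-calculus applied to $\Phi(u,u_1,\dots,u_n)$, pointwise optimisation, discarding the non-negative singular part, and integrating against $\phi\ge0$. The step that fails is the last one. Gradients of test functions that are exactly orthonormal in general exist only on null sets. On a round sphere, for instance, smooth $u_1,\dots,u_n$ with $\langle\nabla u_i,\nabla u_j\rangle=\delta_{ij}$ on a set $E$ of positive measure would give, at density points of $E$, coordinates in which the metric is Euclidean to second order. That forces the curvature to vanish there. So ``first treat sets where finitely many $\nabla u_j$ form an orthonormal basis, then exhaust $X$'' exhausts nothing. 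A ``measurable choice of orthonormal frames made of gradients'' is not available, and Gram--Schmidt leaves the class of gradients. Consequently your inequality gives no pointwise control of $H[u]$ on any dense submodule, which is exactly what you need even to define $\myHess(u)$ by extension by continuity. The density of gradients of test functions that you mention is the right ingredient, but it must be combined with a bound valid for non-orthonormal families.

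The repair is the same computation without orthonormality. Once the first-order terms are removed as you describe, set $c:=\Gamma_2(u)-K|\nabla u|^2$. The density inequality then reads
\[
c+2\sum_{j,k}\mu_{jk}H[u](u_j,u_k)+\Big|\sum_{j,k}\mu_{jk}\nabla u_j\otimes\nabla u_k\Big|_{\HS}^2\ge0 .
\]
By a countable dense set plus continuity, it holds a.e.\ simultaneously for all symmetric $(\mu_{jk})$, hence also for bounded measurable symmetric $\mu$. Replacing $\mu$ by $t\mu$ and optimising in $t$ gives
\[
\Big|\sum_{j,k}\mu_{jk}H[u](u_j,u_k)\Big|\le c^{1/2}\,\Big|\sum_{j,k}\mu_{jk}\nabla u_j\otimes\nabla u_k\Big|_{\HS}
\]
for arbitrary finite families of test functions. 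Since $H[u]$ is symmetric, this bounds $\sum_i h_iH[u](g_i,\tilde g_i)$ by $c^{1/2}|\sum_i h_i\nabla g_i\otimes\nabla\tilde g_i|_{\HS}$ for all $h_i\in L^\infty$ and $g_i,\tilde g_i\in\Test(X)$. Such tensors generate $L^2(T^{\otimes 2}X)$, so $H[u]$ extends to $\myHess(u)$ with $|\myHess(u)|_{\HS}^2\le c$ a.e., and no frames or dimension decomposition are needed. This duality form is essentially how the key estimate enters the cited proof, and with it your remaining steps go through. Finally, your restriction to $\phi\ge0$ is necessary, not a convenience. As printed, the statement omits it and is false for signed $\phi$: on $\R^n$ regarded as an $\RCD(K,N)$ space with $K<0$, Bochner's identity makes the left side minus the right side equal to $-K\int_X|\nabla u|^2\phi\,d\mu$. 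The paper only uses the statement with a non-negative cut-off, in Corollary \ref{c:second-order-caccio}.
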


\begin{cor}\label{c:second-order-caccio}
	Let $\Omega \subset X$ be open. For any $h \colon X \to \R$ which is harmonic in $\Omega$ and any ball $B$ such that $5B \subseteq \Omega$ one has 
	\begin{align}
		\int_B | r_B \myHess (h) |_{\HS}^2 \, d\mu \lesssim_{\cRCD(X)} \int_{2B} |\nabla h|^2 \, d\mu.
	\end{align}
\end{cor}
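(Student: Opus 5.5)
The plan is to test the Bochner inequality of Theorem \ref{t:bochner-type} against a cut-off function, exactly as in the proof of Lemma \ref{l:caccio-replacement}. Harmonic functions are not globally in $\Test(X)$, so first we localise. By scaling we may assume $r_B = 1$. Take the cut-off $\psi$ from Proposition \ref{l:cut-off} with $\psi \equiv 1$ on $2B$, $\supp \psi \subset 4B \subset 5B \subseteq \Omega$, and $|\nabla\psi| + |\Delta \psi| \lesssim_{\cRCD(X)} 1$. Since $h$ is harmonic in $\Omega$, standard regularity in $\RCD$ spaces (local Lipschitz bounds for harmonic functions) gives $h \in \Test_{\rm loc}(\Omega)$, because $\Delta h = 0 \in N^{1,2}_{\rm loc}$. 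Hence $u := \psi h$ (after subtracting a constant from $h$, which changes neither side) lies in $\Test(X)$, and $u = h$ on $2B$. Locality of the Hessian and of the gradient then gives $|\myHess(u)|_\HS = |\myHess(h)|_\HS$ and $\nabla u = \nabla h$ a.e. on $2B$.

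Next, I apply Theorem \ref{t:bochner-type} to $u$ with a second test function $\phi = \eta^2$. Here $\eta$ is another cut-off from Proposition \ref{l:cut-off}, equal to $1$ on $B$ and supported in $2B$, with $|\nabla \eta| + |\Delta\eta| \lesssim 1$. On $\supp\phi \subset 2B$ we have $u = h$, so $\Delta u = 0$ there, and the term $\langle \nabla u, \nabla \Delta u\rangle\phi$ vanishes. This gives
\begin{align}
\int |\myHess(h)|_\HS^2 \eta^2 \, d\mu \leq -\int \langle \nabla h, \nabla |\nabla h|^2\rangle\, \eta\, d\mu \cdot 1 + |K| \int_{2B} |\nabla h|^2\, d\mu,
\end{align}
where I used $\nabla \phi = 2\eta\nabla\eta$; more precisely, the first term on the right is $-\int \langle \nabla |\nabla h|^2, \nabla\eta\rangle \eta \, d\mu$. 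By \eqref{e:hess-in-grad} we have $\nabla|\nabla h|^2 = 2\myHess(h)(\nabla h)$, and by \eqref{e:HS} its norm is at most $2|\myHess(h)|_\HS|\nabla h|$. So the first term is bounded by $2\int |\myHess(h)|_\HS\,\eta\,|\nabla h|\,|\nabla\eta|\,d\mu$. Cauchy--Schwarz and absorption then yield
\begin{align}
\int |\myHess(h)|_\HS^2\eta^2 \, d\mu \lesssim \int_{2B} |\nabla h|^2(|\nabla\eta|^2 + |K|)\, d\mu.
\end{align}
Since $\eta = 1$ on $B$, this proves the claim.

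Finally, the $K$ term has to be made scale-invariant. After rescaling to $r_B = 1$, the curvature bound becomes $K r_B^2$. When $K \geq 0$ the term $K|\nabla u|^2\phi$ is nonnegative on the right-hand side of Bochner, so it can simply be dropped, and the constants depend only on $N$. When $K < 0$, $X$ is bounded, so $|K| r_B^2 \leq |K|\diam(X)^2$, a constant depending on $\cRCD(X)$.

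The main obstacle is justifying that $\psi h \in \Test(X)$, i.e., that harmonic functions are locally Lipschitz and lie in $\Test_{\rm loc}(\Omega)$. I would quote the known local gradient estimates for harmonic functions on $\RCD(K,N)$ spaces (Cheng--Yau type, or the results of Jiang). A secondary point is that $\phi = \eta^2$ must be in $L^\infty \cap N^{1,2}$; this is clear from its construction.
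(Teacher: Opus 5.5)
Your proof is correct. Its skeleton is the same as the paper's: localise $h$ to $u=\psi h\in\Test(X)$ with a Mondino--Naber cut-off, apply Theorem \ref{t:bochner-type} with a nonnegative cut-off supported in $2B$, kill $\langle\nabla u,\nabla\Delta u\rangle\phi$ by harmonicity, and control the $K$ term by dropping it ($K\ge 0$) or by boundedness of $X$ ($K<0$).

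The genuine difference is how you treat the term $-\tfrac12\int\langle\nabla|\nabla u|^2,\nabla\phi\rangle\,d\mu$.

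\emph{The paper's route.} It integrates by parts to rewrite this term as $\tfrac12\int|\nabla u|^2\Delta\phi\,d\mu$. It then uses the second-order bound $r_B^2|\Delta\phi|\lesssim 1$ on the cut-off, which closes the estimate in one line with no absorption.

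\emph{Your route.} You take $\phi=\eta^2$ and use \eqref{e:hess-in-grad} with \eqref{e:HS} to get $|\nabla|\nabla u|^2|\le 2|\myHess(u)|_\HS|\nabla u|$. You then absorb via Cauchy--Schwarz, exactly as in the first-order Caccioppoli argument of Lemma \ref{l:caccio-replacement}.

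\emph{What each buys.} Your route needs only a gradient bound $|\nabla\eta|\lesssim r_B^{-1}$, so $\eta$ could be any Lipschitz cut-off. It does need the a priori finiteness of $\int|\myHess(u)|_\HS^2\eta^2\,d\mu$ to absorb, which holds since $u\in\Test(X)$. The paper's route avoids the Hessian--gradient identity and the absorption step. In exchange it relies on Laplacian control of the cut-off and on the integration by parts, which uses $|\nabla u|^2\in N^{1,2}(X)$ and $\phi\in D(\Delta)$. Since your localisation $\psi h\in\Test(X)$ still requires a cut-off in $\Test(X)$, the saving from your version is modest.

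Your first displayed inequality is garbled, but you correct it yourself: the right-hand side should read $-\int\langle\nabla|\nabla h|^2,\nabla\eta\rangle\eta\,d\mu+|K|\int_{2B}|\nabla h|^2\,d\mu$.
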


\begin{proof}
	Fix a function $h$ and a ball $B$ as above. By definition, $h \in D(\Delta,\Omega)$, which implies that $h \in \mbox{LIP}(4B)$ by \cite[Theorem 1.1]{jiang2014cheeger}. By Proposition \ref{l:cut-off}, there exists a cut-off function $\theta \in \Test(X)$ such that $\theta \equiv 1$ on $3B$ and $\supp(\theta) \subseteq 4B.$ It is easy to check that $u \coloneqq \theta h \in \Test(X)$ and $u \equiv h$ on $3B$. Applying Proposition \ref{l:cut-off} again, there exists a cut-off function $\phi$ such that $\phi \equiv 1$ on $B$, $\supp(\phi) \subseteq 2B$ and $r_B^2 | \Delta \phi| \lesssim_{\cRCD(X)} 1.$ Note that $\langle \nabla u , \nabla \Delta u \rangle \phi \equiv 0$ on $X$ since $h$ (and hence $u$) is Cheeger harmonic on $2B$ and $\supp(\phi) \subset 2B.$ Applying Theorem \ref{t:bochner-type} and the integration by parts formula \eqref{e:harmonics} for $\phi$, we now have  
	\begin{align}
		\int_B | \myHess(h) |_\HS^2 \, d\mu &= \int_B | \myHess(u) |_\HS^2 \, d\mu \leq \int_X |\myHess(u)|_\HS^2 \phi \, d\mu \\
		&\leq \int_X \frac{1}{2}|\nabla u|^2 \Delta \phi  - K|\nabla u|^2\phi      \, d\mu \\
		& \lesssim_{\cRCD(X)} r_B^{-2} \int_{2B} |\nabla u|^2 \, d\mu = r_B^{-2} \int_{2B} |\nabla h|^2 \, d\mu,
	\end{align}
	which completes the proof. 
\end{proof}

We finish by checking a weak Poincar\'e inequality for $|\nabla f |$ in terms of $\myHess(f)$.

\begin{lem}\label{l:PI-RCD}
	For each ball $B$ in $X$ and each $f \in \Test(X)$ we have 
	\begin{align}
		\int_B | |\nabla f | - \langle |\nabla f | \rangle_B|^2 \, d\mu \lesssim_{\cRCD(X)} \int_{4B} r_B^2|  \myHess(f)|_\HS^2 \, d\mu.
	\end{align}

\end{lem}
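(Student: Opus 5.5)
The plan is to apply the weak $(2,2)$-Poincaré inequality (Theorem \ref{t:upgrade-PI} combined with Theorem \ref{t:RCD-PI}) to the function $u = |\nabla f|$, and then bound the minimal weak upper gradient of $|\nabla f|$ pointwise by $|\myHess(f)|_\HS$. Since $X$ supports a weak $(1,2)$-Poincaré inequality with $\API = 2$, the self-improvement gives a weak $(2,2)$-Poincaré inequality. I would like the dilation constant to stay at most $4$, so as to match the $4B$ in the statement. If the self-improvement enlarges the dilation, I would instead use the fact that $\RCD(K,N)$ spaces are geodesic, in which case the Poincaré inequality holds with dilation $1$ (and hence certainly with dilation $4$), at the cost of a constant depending only on $\cRCD(X)$. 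Either way this yields
\begin{align}
	\int_B \big| |\nabla f| - \langle |\nabla f|\rangle_B \big|^2 \, d\mu \lesssim_{\cRCD(X)} r_B^2 \int_{4B} g_{|\nabla f|}^2 \, d\mu ,
\end{align}
provided $|\nabla f| \in N^{1,2}_{\rm loc}$, which I address next.

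The key step is the pointwise bound $g_{|\nabla f|} \le |\myHess(f)|_\HS$ $\mu$-a.e. For $f \in \Test(X)$, the theory in \cite{gigli2018nonsmooth} gives $|\nabla f|^2 \in N^{1,2}(X)$, and \eqref{e:hess-in-grad} states that $\nabla |\nabla f|^2 = 2\myHess(f)(\nabla f)$. Combining this with \eqref{e:HS} gives
\begin{align}
	|\nabla |\nabla f|^2| \le 2 |\myHess(f)|_\HS |\nabla f| .
\end{align}
On the set $\{|\nabla f| > 0\}$, the chain rule (applied to $|\nabla f| = \sqrt{|\nabla f|^2}$, approximated by $\sqrt{\varepsilon + |\nabla f|^2}$ and letting $\varepsilon \to 0$) gives
\begin{align}
	|\nabla |\nabla f|| = \frac{|\nabla |\nabla f|^2|}{2|\nabla f|} \le |\myHess(f)|_\HS .
\end{align}
On the set $\{|\nabla f| = 0\}$, locality of the gradient gives $\nabla |\nabla f| = 0$ a.e. Together with \eqref{e:comp-upper-grad}, this yields $g_{|\nabla f|} = |\nabla |\nabla f|| \le |\myHess(f)|_\HS$.

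The main obstacle is the regularity justification: showing that $|\nabla f|$ itself belongs to $N^{1,2}$ (or at least $N^{1,2}_{\rm loc}$), so that the Poincaré inequality applies. I would handle this as follows. The functions $\sqrt{\varepsilon + |\nabla f|^2}$ are Sobolev, since they are Lipschitz compositions of the Sobolev function $|\nabla f|^2$, and they have uniformly bounded gradients by the above bound $|\myHess(f)|_\HS \in L^2$. They converge to $|\nabla f|$ in $L^2_{\rm loc}$. By lower semicontinuity of the energy, $|\nabla f|$ is then Sobolev with the same gradient bound. Alternatively, one can pass to the limit $\varepsilon \to 0$ directly in the Poincaré inequality applied to the approximants, which avoids this regularity issue altogether. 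Finally, substituting the pointwise bound into the Poincaré inequality of the first paragraph completes the proof.
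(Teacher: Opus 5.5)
Your proposal is correct and follows essentially the same route as the paper: apply the weak $(2,2)$-Poincar\'e inequality (dilation $4$) to $|\nabla f|$, then bound $|\nabla|\nabla f||\le|\myHess(f)|_\HS$ using \eqref{e:hess-in-grad} and \eqref{e:HS} on $\{|\nabla f|>0\}$ and locality on $\{|\nabla f|=0\}$. Your $\sqrt{\varepsilon+|\nabla f|^2}$ approximation and your geodesic fallback for the dilation constant are sound extra justifications of points the paper takes for granted, namely the Sobolev regularity of $|\nabla f|$ implicit in \eqref{e:hess-in-grad} and the dilation constant $4$.
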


\begin{proof}
	It follows from Theorem \ref{t:upgrade-PI} and Theorem \ref{t:RCD-PI} that $(X,d,\mu)$ supports a weak $(2,2)$-Poincar\'e inequality with constant $\CPI'$ depending only on $\cRCD(X)$ and dilation constant equal to $4$. This and \eqref{e:comp-upper-grad} now imply 
	\begin{align}
		\int_B | |\nabla f | - \langle |\nabla f | \rangle_B|^2 \, d\mu \lesssim_{\cRCD(X)} \int_{4B} | \nabla | \nabla f ||^2 \, d\mu .
	\end{align}
	By locality, $| \nabla | \nabla f || = 0$ almost everywhere in $\{|\nabla f| = 0\}$. By equation \eqref{e:HS} and \eqref{e:hess-in-grad}, $| \nabla | \nabla f || \leq |\myHess(f)|_\HS$ almost everywhere in $\{|\nabla f| >0 \}$. In particular, following on from the previous inequality, we have 
	\begin{align} 
		\int_B |  |\nabla f | - \langle |\nabla f | \rangle_B|^2  \, d\mu \lesssim_{\cRCD(X)} \int_{4 B} | \myHess(f)|^2_\HS \, d\mu ,
	\end{align}
	as required. 
\end{proof}

\section{Properties of the $H$-coefficients}\label{s:coef} Here, we define and study the quantities $\har{f,D}$ and $\harrcd{f}$ (recall the definition of $\harosc{f}$ from the introduction). The first quantity gives a way to measure the local distance to the class of Cheeger harmonic functions, the second quantity gives a way to measure the local distance to the class of harmonic functions with controlled Hessian.

\begin{defn}\label{d:H-no-osc} Let $(X,d,\mu)$ be a doubling metric measure space which supports a weak $(1,2)$-Poincar\'e inequality, and let $D$ be a Cheeger derivative on $X$. For each $f \colon X \to \R,$ $x \in X$ and $0 < r < \diam(X),$ define 
	\begin{align}
		\har{f,D}(x,r)^2 \coloneqq \inf_h \har{f,D}(x,r ; h)^2 \coloneqq  \inf_h \dashint_{B(x,r)} \left(\frac{f - h}{r} \right)^2  \, d\mu, 
	\end{align}
	where the infimum is taken over all function $h \colon X \to \R$ which are Cheeger harmonic (with respect to $D$) in $B(x,r)$. If $X$ is an $\RCD(K,N)$ space, we will write $\har{f} \coloneqq \har{\nabla ,f}.$ 
\end{defn}

\begin{defn}\label{d:H-RCD}
	Let $(X,d,\mu)$ be an $\RCD(K,N)$ space. For each $f \colon X \to \R,$ $x \in X$ and $0 < r < \diam(X),$ define 
	\begin{align}
		\harrcd{f}(x,r)^2 \coloneqq \inf_h \harrcd{f}(x,r;h) = \inf_h \dashint_{B(x,r)} \left(\frac{f - h}{r} \right)^2 + r^2|\myHess(h)|_\HS^2 \, d\mu, 
	\end{align}
	where the infimum is taken over all function $h \colon X \to \R$ which are harmonic in $B(x,r)$. 
\end{defn}

\begin{rem}
	In the case that $B = B(x,r)$ we will on occasion write $\har{f,D}(B)$ and $\har{f,D}(B;h)$ to denote the quantities $\har{f,D}(x,r)$ and $\har{f,D}(x,r;h)$. We will adopt a similar convention with the coefficients $\har{f}$ and $\harrcd{f}$. 
\end{rem}

If $X$ is an $\RCD(K,N)$ space for some $K \in \R$ and $N \in (1,\infty)$ (and $\diam(X) < \infty$ in the case that $K < 0$), it is immediate from Lemma \ref{l:PI-RCD} that 
\begin{align}\label{H-estimate}
	\har{f}(x,r) \leq \harosc{f}(x,r) \lesssim_{\cRCD(X)} \harrcd{f}(x,4r)
\end{align}
for each $x \in X$ and $0 < r < \diam(X).$ The following tells us that, in $\R^n,$ the usual Dorronsoro coefficient $\Omega_f$ is comparable to $\harrcd{f}$. 

\begin{lem}\label{l:RCD-Rn}
For every $x \in \R^n,$ $0 < r < \infty$ and $f \colon \R^n \to \R,$ 
\begin{align}
	\Omega_f(x,r) \leq \harrcd{f}(x,r) \lesssim \Omega_f(x,r).
\end{align}
\end{lem}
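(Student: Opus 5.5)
The upper bound $\harrcd{f}(x,r) \lesssim \Omega_f(x,r)$ follows by restricting the competitors, and the lower bound $\Omega_f \le \harrcd{f}$ needs a projection argument. In $\R^n$ the RCD Hessian coincides with the classical Hessian and $|\cdot|_\HS$ with the Frobenius norm (via the isometry $T$ from the remark in Section \ref{s:prelims}). By scaling and translation we take $x=0$, $r=1$, $B = B(0,1)$, and we use the Lebesgue measure.

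\emph{Upper bound.} Every affine $A$ is harmonic in $B$ and has $\myHess(A) = 0$. Hence $\harrcd{f}(0,1;A)^2 = \dashint_B (f-A)^2$. Taking the infimum over affine $A$ gives $\harrcd{f}(0,1) \le \Omega_f(0,1)$. (The statement's $\lesssim$ thus holds with constant $1$.)

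\emph{Lower bound.} This amounts to showing that every $h$ harmonic in $B$ is close to an affine function, with an error controlled by the Hessian term:
\begin{align}
\inf_A \dashint_B (h - A)^2 \lesssim \dashint_B |\myHess(h)|^2 .
\end{align}
Granting this, choose $h$ nearly optimal for $\harrcd{f}(0,1)$ and the corresponding $A$. The triangle inequality in $L^2(B)$ then gives $\Omega_f(0,1) \lesssim \harrcd{f}(0,1)$.

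\emph{Proof of the displayed inequality.} I would apply the Poincaré inequality on the ball $B$ (a convex domain, so no dilation is needed) twice. First apply it to each $\partial_i h$, giving $\int_B |\nabla h - v|^2 \lesssim \int_B |\myHess(h)|^2$ with $v = \langle \nabla h\rangle_B$. Next apply it to $h - v\cdot y$, whose gradient is $\nabla h - v$. Setting $A(y) = \langle h\rangle_B + v\cdot(y - \langle y \rangle_B)$ then yields $\int_B (h-A)^2 \lesssim \int_B |\nabla h - v|^2 \lesssim \int_B|\myHess(h)|^2$.

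The main obstacle is regularity. We need $h \in W^{2,2}(B)$ up to the boundary, and we do not even need harmonicity for that. If $\int_B |\myHess(h)|^2 = \infty$ there is nothing to prove. Otherwise, $h$ is smooth inside $B$ by Weyl's lemma, and the Poincaré inequality for $\partial_i h$ holds on the convex domain $B$ as long as $\partial_i h \in W^{1,2}(B)$. A subtlety is that $\nabla h$ might fail to lie in $L^2(B)$ near $\partial B$. To avoid this, I would run the argument on $B(0,s)$ for $s<1$, where $h$ is smooth up to the boundary, with Poincaré constants uniform in $s$. Letting $s \to 1$ and extracting a convergent subsequence of the finite-dimensional data $(\langle h\rangle_{B_s}, v_s)$ then gives the bound, since these data are bounded by $\dashint (h - f)^2$ and $\dashint f^2$ via the estimates. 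Alternatively, pass to the limit with Fatou's lemma.
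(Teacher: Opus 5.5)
Your proof is correct and follows the paper's. Affine competitors have zero Hessian, which gives $\harrcd{f}\le\Omega_f$. For the reverse bound, both you and the paper take a near-optimal harmonic $h$, choose an affine $A$ with slope $\langle\nabla h\rangle_B$, and apply the Poincar\'e inequality twice, once to $h-A$ and once to each $\partial_i h$. The only differences are that you fix the constant term by $\langle h\rangle_B$ rather than by the mean value property $h(x)$, and that you handle the boundary integrability of $\nabla h$, which the paper passes over.

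Note that what you (and the paper) actually prove is $\Omega_f\lesssim\harrcd{f}$ with a constant, not the $\Omega_f\le\harrcd{f}$ written in your opening line and in the printed statement, whose two inequalities are interchanged. The constant cannot be $1$: for $n=2$, $x=0$, $r=1$ and $f=q=y_1^2-y_2^2$, one has $\Omega_f^2=\dashint_B q^2=1/6$, whereas $\harrcd{f}^2\le\dashint_B (f-q/49)^2+|\myHess(q/49)|_\HS^2=8/49$.
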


\begin{proof}
The fact that $\harrcd{f} \leq \Omega$ is immediate from the definitions since any affine function $A$ is harmonic and $|\myHess(A)|_{\HS} = 0$. Here, $|\cdot|_\HS$ is the usual Hilbert-Schmidt norm of a matrix. For the second inequality, fix $x \in \R^n$, $0 < r < \infty$ and $f \colon \R^n \to \R.$ Let $\ve > 0$ and $h \colon \R^n \to \R$ be harmonic in $B(x,r)$ such that 
\begin{align}\label{e:h-less-than}
	\dashint_{B(x,r)} \left( \frac{f - h}{2r} \right)^2 + |r \myHess(h) |^2_{\HS} \, d\mu \leq \harrcd{f}(x,r) + \ve. 
\end{align}
Define an affine function $A \colon \R^n \to \R$ by 
\begin{align}
	A(y) = h(x) + \langle \nabla h \rangle_{B(x,r)} \cdot (y-x) . 
\end{align}
Then,  
\begin{align}
	\Omega_f(x,r)^2 &\leq \dashint_{B(x,r)} \left(\frac{f - A}{2r}  \right)^2 \, dy \\
	&\lesssim \dashint_{B(x,r)} \left( \frac{f-h}{2r} \right)^2  \, dy + \dashint_{B(x,r)} \left( \frac{h - A}{2r}\right)^2 \, dy,
\end{align}
where in the second inequality we used the fact that for $a,b \geq 0$ we have $(a+b)^2 \leq 4(a^2 + b^2).$ Since $A$ and $h$ are both harmonic in $B(x,r)$, the mean value property implies $\langle h - A \rangle_{B(x,r)} = h(x)  - A(x) = 0.$ Thus, after applying the usual Poincar\'e inequality (recalling that $\nabla A = \langle \nabla h \rangle_{B(x,r)})$ we have 
\begin{align}\label{e:omega-less-than}
	\Omega_f(x,r)^2 &\lesssim \dashint_{B(x,r)} \left( \frac{f-h}{2r} \right)^2  \, dy + \dashint_{B(x,r)} | \nabla h - \langle \nabla h \rangle_{B(x,r)} |^2 \, dy.
\end{align}
Applying the Poincar\'e inequality again, we get 
\begin{align}
	\dashint_{B(x,r)} | \nabla h - \langle \nabla h \rangle_{B(x,r)} |^2 \, dy &= \sum_{i=1}^N \dashint_{B(x,r)} | \partial_{x_i} h - \langle \partial_{x_i}h \rangle_{B(x,r)} |^2\, dy \\
	&\lesssim \dashint_{B(x,r)} | r \nabla \partial_{x_i} h |^2\, dy = \sum_{i,j =1}^n \dashint_{B(x,r)} | r \partial_{x_i x_j} h|^2 \, dy \\
	& = \dashint_{B(x,r)} | r \myHess(h) |^2_\HS \, dy. 
\end{align}
This, together with \eqref{e:h-less-than} and \eqref{e:omega-less-than}, implies $\Omega_f(x,r) \lesssim \harrcd{f}(x,r) + \ve.$ Since $\ve$ was arbitrary, this completes the proof. 
\end{proof}

The following lemma gives an upper bound for the $H$-coefficients and a semi-continuity type property.

\begin{lem}\label{l:bounded-and-semicont}
	Suppose $(X,d,\mu)$ is a doubling metric measure space which supports a weak $(1,2)$-Poincar\'e inequality and $D$ is a Cheeger derivative on $X$. Then, for every $f \colon X \to \R$ and every pair of balls $B \subseteq B',$  
	\begin{align}\label{e:upper-bound-H}
		\har{f,D}(B)^2\mu(B) \lesssim_{\CPI} \| Df \|_{L^2(\API B)}^2
	\end{align}
	and
	\begin{align}\label{e:semi-cont-RCD}
		\har{f,D}(B) \lesssim   \left( \frac{\Cdoub^k r_{B'}}{r_{B}} \right) \har{f,D}(B'),
	\end{align}
	where $k = \ceil{\frac{1}{2}\log(\frac{r_{B'}}{r_B})}.$ If $(X,d,\mu)$ is an $\RCD(K,N)$ space, then \eqref{e:upper-bound-H} and \eqref{e:semi-cont-RCD} also hold with the coefficient $\harrcd{f}$ and the derivative $D = \nabla.$ 
\end{lem}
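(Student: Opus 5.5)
For the upper bound \eqref{e:upper-bound-H}, I would test the infimum with a constant. Constants are Cheeger harmonic in $B$, so I choose $h \equiv \langle f \rangle_B$. Then
\begin{align}
	\har{f,D}(B)^2\mu(B) \leq \frac{1}{r_B^2}\int_B |f - \langle f\rangle_B|^2 \, d\mu .
\end{align}
Theorem \ref{t:upgrade-PI} gives a $(2,2)$-Poincar\'e inequality, and with \eqref{e:deriv-comp-upper-grad} this bounds the right-hand side by $\lesssim \int_{\API B} g_f^2 \, d\mu \sim \|Df\|^2_{L^2(\API B)}$. Here $\API$ is the dilation constant of the upgraded inequality, which is a constant multiple of the original one.

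For $\harrcd{f}$ the same choice works, since a constant $h$ has $\myHess(h)=0$ by linearity of $h\mapsto \myHess(h)$ and locality. The bound is then identical, with $\API=2$ from Theorem \ref{t:RCD-PI} (or $4$ after upgrading). If $f \notin N^{1,2}$, the right-hand side is infinite and there is nothing to prove.

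For the semi-continuity estimate \eqref{e:semi-cont-RCD}, I would take any $h$ harmonic in $B'$. Since $B \subseteq B'$ is open, $h$ is also harmonic in $B$, because $N^{1,2}_0(B)\subseteq N^{1,2}_0(B')$. Hence
\begin{align}
	\har{f,D}(B;h)^2 = \frac{r_{B'}^2}{r_B^2}\,\frac{\mu(B')}{\mu(B)}\cdot \frac{1}{\mu(B')}\int_B \Big(\frac{f-h}{r_{B'}}\Big)^2 d\mu \leq \frac{r_{B'}^2}{r_B^2}\frac{\mu(B')}{\mu(B)}\har{f,D}(B';h)^2 .
\end{align}
The main point is then to bound $\mu(B')/\mu(B)$ by a power of $\Cdoub$. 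Because $B\subseteq B'$, the centres satisfy $d(x_B,x_{B'})< r_{B'}$, so $B' \subseteq B(x_B, 2r_{B'}) \subseteq 2^{j}B$ with $2^j \geq 2r_{B'}/r_B$. Doubling then gives $\mu(B')\leq \Cdoub^{j}\mu(B)$ with $j \approx \log_2(r_{B'}/r_B)+1$.

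I expect the only real obstacle to be matching the stated exponent $k=\ceil{\frac12\log(r_{B'}/r_B)}$. The bound above produces $\Cdoub^{2k}$ up to a constant factor $\Cdoub$ in the squared inequality, and taking square roots turns this into $\Cdoub^{k}$ up to constants. I would therefore work with the squared quantities and absorb the extra $\Cdoub$ into the implicit constant, reading $\log$ as base $2$. Taking the infimum over $h$ finishes the argument.

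For $\harrcd{f}$, the Hessian term needs separate treatment: $r_B^2\dashint_B |\myHess(h)|^2_\HS \leq r_{B'}^2\frac{\mu(B')}{\mu(B)}\dashint_{B'}|\myHess(h)|_\HS^2$. This is dominated by the same factor, since $r_{B'}/r_B\geq1$. Here I use that the local Hessian is defined on $B'$ and restricts to $B$ by locality, as in \eqref{e:def-local-Hess}.
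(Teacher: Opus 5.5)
Your proposal is correct and is essentially the paper's proof: for \eqref{e:upper-bound-H} you test the infimum with the constant $\langle f\rangle_B$ and apply the upgraded $(2,2)$-Poincar\'e inequality, and for \eqref{e:semi-cont-RCD} you restrict a harmonic competitor on $B'$ to $B$, compare radii and averages, and bound $\mu(B')/\mu(B)$ by doubling, handling the Hessian term the same way using $r_B\le r_{B'}$. Your accounting via $B'\subseteq B(x_B,2r_{B'})\subseteq 2^{2k+1}B$, reading $\log$ as $\log_2$ and absorbing the extra factor $\Cdoub$ into the implicit constant, is the accurate version of the paper's step ``$B'\subseteq 2^kB$''; that inclusion does not hold literally for $k=\ceil{\frac12\log(r_{B'}/r_B)}$, and the paper only uses it to get $\mu(B')\lesssim\Cdoub^{2k}\mu(B)$.
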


\begin{proof}
	We shall prove the estimates for $\harrcd{f}$ in the case that $(X,d,\mu)$ is an $\RCD$ space. The proofs is almost identical for $\har{f,D}$, and we omit the details. First, viewing $\langle f \rangle_B$ as a function on $X$ (which is harmonic with zero Hessian), the Poincar\'e inequality implies
	\begin{align}
		\harrcd{f}(B)^2 \leq \dashint_B \left(\frac{f-\langle f \rangle_{B}}{r_B} \right)^2 \, d\mu \lesssim_{\CPI} \dashint_{\API B} | \nabla f |^2 \, d\mu. 
	\end{align}
	Equation \eqref{e:upper-bound-H} is immediate from this. For \eqref{e:semi-cont-RCD}, let $\ve > 0$ and let $h \colon X \to \R$ be a harmonic in $B'$ such that 
	\begin{align}
		\harrcd{f}(B';h) \leq \harrcd{f}(B') + \ve. 
	\end{align}
	Then, since $h$ is harmonic in $B$, $\mu(B') \geq \mu(B) $, and $r_B \geq r_{B'}$, we have 
	\begin{align}
		\harrcd{f}(B)^2 &\leq \dashint_B \left( \frac{f-h}{r_B} \right)^2 \, d\mu + \dashint_B | r_B \myHess(h) |_\HS^2 \, d\mu \\
		&\leq \frac{\mu(B') r_{B'}^2}{\mu(B)r_B^2} \dashint_{B'}  \left( \frac{f-h}{r_{B'}} \right)^2 \,d\mu + \dashint_{B'} | r_{B'} \myHess(h) |_\HS^2 \, d\mu \\
		&\leq \frac{\mu(B') r_{B'}^2}{\mu(B)r_B^2} \dashint_{B'}  \left( \frac{f-h}{r_{B'}} \right)^2 + | r_{B'} \myHess(h) |_\HS^2 \, d\mu . 
	\end{align}
	It is easy to check that our choice for $k$ implies $B' \subseteq 2^k B.$ Thus, by the doubling property and our choice of $h,$
	\begin{align}
		\harrcd{f}(B)^2 \leq \left(\frac{\Cdoub^k r_{B'}}{r_{B}}\right)^2\left( \harrcd{f}(B')^2 + \ve\right). 
	\end{align}
	Now, taking $\ve \to 0$ finishes the proof. 
\end{proof}

For $z \in X$ and $0 < R \leq \diam(X)$, define 
\begin{align}
	H_D(f,z,R) \coloneqq \int_0^R \int_{B(z,R)} \har{f,D}(x,r)^2 \, \frac{d\mu dr}{r},
\end{align}
where we define $B(x,\infty) \coloneqq X$. For a subset $\cubes' \subseteq \cubes$ and parameter $\lambda \geq 1$, define
\begin{align}
	H_D(f,\cubes',\lambda) \coloneqq \sum_{\substack{Q \in \cubes'}} \har{f,D}(\lambda B_Q)^2\mu(Q) 
\end{align}
Define $H^{\rm RCD}(f,z,R)$ and $H^{\rm RCD}(f,\cubes',\lambda)$ similarly. \\

The next result gives us a way to translate between the discrete and continuous Carleson conditions. Such a correspondence is now standard for coefficients satisfying a semicontinuity property as in Lemma \ref{l:bounded-and-semicont}. However, we could not find a precise analogue in the literature, so we have included a proof in the appendix. 

\begin{lem}\label{l:continuous-to-discrete}
	Suppose $(X,d,\mu)$ is a doubling metric measure space which supports a weak $(1,2)$-Poincar\'e inequality and $D$ is a Cheeger derivative on $X$. We have the following implications. 
	\begin{enumerate}
		\item For each $z  \in X$ and $0 < R < \diam(X)$, if $k \in \Z$ is such that $5\rho^k \geq R$ and $\lambda \geq 3$, then 
		\begin{align}
			H_D(f,z,R) \lesssim_{\lambda, \Cdoub} \sum_{\substack{Q \in \cubes_{k} \\ Q \cap B(z,R) \neq\emptyset}} H_D(f,\cubes(Q),\lambda).
		\end{align}
		\item For each $k \in \Z$, $Q \in \cubes_k$ and $\lambda \geq 1$, we have 
		\begin{align}
			H_D(f,\cubes(Q),\lambda) \lesssim_{\Cdoub} H_D(f,x_Q,2\lambda \rho^{-1}). 
		\end{align}
		Moreover, in the case that $R = \diam(X) = \infty$, we have 
		\begin{align}
			H_D(f,\cubes,\lambda) \lesssim_{\lambda, \Cdoub} H_D(f,z,\infty). 
		\end{align}
	\end{enumerate}
	If $(X,d,\mu)$ is an $\RCD(K,N)$ space for some $K \in \R$ and $N \in (1,\infty)$, then the same properties hold for the coefficients $H^{\rm RCD}(f,z,R)$ and $H^{\rm RCD}(f,\cdot,\lambda)$. 
\end{lem}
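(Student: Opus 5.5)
Both parts come down to one comparison: for a cube $Q\in\cubes_j$ and a point $x$ near $Q$ with $r\sim\ell(Q)$, the discrete quantity $\har{f,D}(\lambda B_Q)^2\mu(Q)$ and the continuous quantity $\int\int \har{f,D}(x,r)^2\,d\mu\,dr/r$ over a suitable Whitney-type region should be comparable. The only tools needed are the semicontinuity estimate \eqref{e:semi-cont-RCD} of Lemma \ref{l:bounded-and-semicont}, the doubling property, and the bounded overlap of Lemma \ref{l:bounded-overlap}. The estimate \eqref{e:semi-cont-RCD} holds verbatim for $\harrcd{f}$, so the argument below transfers word for word to the $\RCD$ coefficients.

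For (1), I would split $(0,R)$ into dyadic-type scale intervals $r\in(\rho^{j+1},\rho^{j}]\cdot 5\rho$ with $j\ge k$. For each $x\in B(z,R)$ and each such scale, pick the cube $Q'\in\cubes_{j}$ containing $x$; this is possible for $\mu$-a.e. $x$ by Theorem \ref{cubes}(1). Then $B(x,r)\subseteq \lambda B_{Q'}$ as soon as $\lambda\ge 3$, since $d(x,x_{Q'})<\ell(Q')$ and $r\le \ell(Q')$. The radii are comparable up to $\rho^{-1}$, so \eqref{e:semi-cont-RCD} gives
\[
\har{f,D}(x,r)\lesssim_{\lambda,\Cdoub}\har{f,D}(\lambda B_{Q'}).
\]
Integrating $d\mu(x)\,dr/r$ over $x\in Q'$ and over the scale interval, whose $dr/r$-measure is $\lesssim 1$, bounds the contribution by $\har{f,D}(\lambda B_{Q'})^2\mu(Q')$. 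Summing over $j\ge k$ and over the cubes $Q'$ meeting $B(z,R)$ finishes (1). The one point to check is that each such $Q'$ is a descendant of some $Q\in\cubes_k$ with $Q\cap B(z,R)\neq\emptyset$, which follows from the nesting property in Theorem \ref{cubes}(2).

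For (2), I would go in the reverse direction. For $R'\in\cubes_j(Q)$, every $x\in B(x_{R'},c_0\ell(R'))\subseteq R'$ and every $r\in[2\lambda\ell(R'),4\lambda\ell(R')]$ satisfy $\lambda B_{R'}\subseteq B(x,r)$. Applying \eqref{e:semi-cont-RCD} with $B=\lambda B_{R'}$, $B'=B(x,r)$ gives $\har{f,D}(\lambda B_{R'})\lesssim \har{f,D}(x,r)$. By doubling, $\mu(R')\lesssim\mu(B(x_{R'},c_0\ell(R')))$, so
\[
\har{f,D}(\lambda B_{R'})^2\mu(R')\lesssim \int_{2\lambda\ell(R')}^{4\lambda\ell(R')}\int_{R'}\har{f,D}(x,r)^2\,\frac{d\mu\,dr}{r}.
\]
Now sum over $R'\in\cubes(Q)$. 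The sets $R'\times[2\lambda\ell(R'),4\lambda\ell(R')]$ overlap a bounded number of times: within one generation the cubes are disjoint, and the $r$-intervals of different generations overlap only $O(1)$ times since $\ell$ scales by $\rho$. All these sets lie in $B(x_Q,\ell(Q))\times(0,4\lambda\ell(Q))$. Checking the radius: $4\lambda\ell(Q)=20\lambda\rho^k$, so to land inside $H_D(f,x_Q,2\lambda\rho^{-1})$ the normalisation needs $k$ adjusted (e.g.\ $k=0$ and $\rho$ small). I would read the stated radius as "$C\lambda\ell(Q)$" and absorb the mismatch by a final application of doubling. The case $R=\diam X=\infty$ follows by the same argument with the ambient region $X\times(0,\infty)$ and no restriction to a top cube; the bounded overlap then holds globally.

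The main obstacle is purely bookkeeping. I need to confirm that the dependence of \eqref{e:semi-cont-RCD} on the ratio of radii stays bounded, which holds because all radius ratios used are bounded by constants in $\lambda$ and $\rho$, and I need to get the overlap count in (2) right. There is no analytic difficulty beyond this.
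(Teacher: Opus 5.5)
Your proposal is correct and is essentially the paper's argument: both parts come from the semicontinuity estimate \eqref{e:semi-cont-RCD} applied on Whitney-type regions, $Q\times(\varrho_{m+1},\varrho_m)$ in (1) and $T\times(2\lambda\varrho_m,2\lambda\varrho_{m-1})$ in (2). Your overlapping intervals $[2\lambda\ell(T),4\lambda\ell(T)]$ and your direct treatment of the case $\diam X=\infty$ are only cosmetic variants. Your reading of the radius in (2) as a multiple of $\lambda\ell(Q)$ matches what the paper's proof actually yields, namely $H_D(f,x_Q,2\lambda\rho^{-1}\ell(Q))$, and no doubling is needed to reach it since $4\lambda\ell(Q)\le 2\lambda\rho^{-1}\ell(Q)$ whenever $\rho\le 1/2$. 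In (1), your scale intervals should read $(5\rho^{j+1},5\rho^{j}]$ for the claimed $\rho^{-1}$-comparability of radii to hold.
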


\section{Estimating the $H$-coefficients}\label{s:PI}

In this section, we prove Theorem \ref{t:PI}. We also show how Theorem \ref{t:PI} implies the \textit{only if} statement in Theorem \ref{t:RCD-2}. Throughout, $X = (X,d,\mu)$ denotes a fixed doubling metric measure space supporting a weak $(1,2)$-Poincar\'e inequality, and $D$ denotes a Cheeger derivative on $X$. We also fix a function $f \in N^{1,2}(X).$ For brevity, we will simply denote 
\begin{align}
	\harD \coloneqq \har{f,D}.
\end{align}

It follows directly from the weak $(2,2)$-Poincar\'e inequality (see Theorem \ref{t:RCD-PI} and Theorem \ref{t:upgrade-PI})  that
\begin{align}
	\left\| \frac{f - f_{B(z,R)}}{R} \right\|_{L^2(B(z,R))}^2 \lesssim \| \nabla f \|_{L^2(B(z,\API R)))}^2 
\end{align}
for all $z \in X$ and $0 < R < \diam(X)$, with the correct dependencies. Thus, it only remains to estimate the second term in \eqref{e:theorem-PI}. For this, we would like to discretise the integral appearing in main statement. Suppose for the moment that $\cubes$ is a system of Christ-David cubes on $X$ and $k \in \Z$ is the smallest integer such that $5\rho^k\geq R$. By minimality, $5\rho^k < \rho^{-1} R$. Thus, if $Q \in \cubes_k$ satisfies $Q \cap B(z,R) \neq\emptyset$ and $A \geq 1$, we have 
\begin{align}
	A B_Q \subseteq B(z,10A \rho^k + R) \subseteq 20A \rho^{-1}B(z,R).
\end{align}

By this observation and Lemma \ref{l:continuous-to-discrete} (1), it suffices to prove the proposition below.

\begin{prop}\label{p:main}
	Let $\calD$ be a system of Christ-David cubes on $X$. For each $Q_0 \in \calD$ we have 
	\begin{align}\label{e:tilde-H}
		\sum_{Q \in \cubes(Q_0)} \harD(3B_Q)^2\mu(Q) \lesssim_{\Cdoub,\CPI,\API} \| D f \|_{L^2(3\API B_{Q_0})}^2.
	\end{align}
\end{prop}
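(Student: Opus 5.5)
The plan is to control the coefficients through harmonic replacements on disjoint cubes. The key point is that successive replacements have energies that telescope.

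\textbf{Reduction to harmonic replacements on cubes.} By Corollary \ref{c:C-to-1-update}, each $Q \in \cubes(Q_0)$ with $k(Q) \geq k^*$ has a cube $R_Q \in \cubes^j$, for one of finitely many systems, with $10B_Q \subseteq R_Q$, $\ell(R_Q) \lesssim \ell(Q)$, bounded multiplicity $\#\{Q : R_Q = R\} \lesssim 1$, and $R_Q \subseteq Q^{j,i}_0 \subseteq 3B_{Q_0}$. The finitely many $Q$ with $k(Q) < k^*$ are handled directly by \eqref{e:upper-bound-H}. Hence any function harmonic in $R_Q$ is admissible for $\harD(3B_Q)$, and by doubling, $\mu(R_Q) \sim \mu(Q)$. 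It therefore suffices to fix one system $\cubes^j$ and one top cube $P_0 = Q^{j,i}_0$, and to prove
\begin{align}
\sum_{R \in \cubes^j(P_0)} \inf_{h \text{ harm. in } R} \int_R \Big(\frac{f-h}{\ell(R)}\Big)^2 d\mu \lesssim \|Df\|_{L^2(\API P_0)}^2 .
\end{align}
The disjointness of the $Q^{j,i}_0$ then lets us sum over $i$ and $j$.

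\textbf{Construction and telescoping.} For each level $m \geq k(P_0)$, let $h_m$ be defined cube by cube. On each $R \in \cubes^j_m(P_0)$ (these are disjoint open sets), $h_m$ is the solution of the Dirichlet problem for $f$ in $R$ (Theorem \ref{t:existence}), and $h_m = f$ elsewhere. By Lemma \ref{l:stitch-sobolev}, $h_m \in N^{1,2}(X)$, since $\|D(h_R - f)\|_{L^2(R)} \lesssim \|Df\|_{L^2(R)}$ by minimality. Set $\Phi_m = \int_{P_0}\|Dh_m\|^2$. Since the level-$(m+1)$ cubes refine those of level $m$, the function $h_m - h_{m+1}$ lies in $N^{1,2}_0$ of each level-$m$ cube. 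Orthogonality \eqref{e:orthogonality} of $h_m$ against it gives the Pythagorean identity
\begin{align}
\int_{P_0} \|D(h_{m+1}-h_m)\|^2 \,d\mu = \Phi_{m+1}-\Phi_m ,
\end{align}
and $\Phi_m \leq \Phi_{m+1} \leq \int_{P_0}\|Df\|^2$. Thus $\sum_m (\Phi_{m+1}-\Phi_m) \leq \|Df\|^2_{L^2(P_0)}$.

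\textbf{Summing the coefficients.} For $R$ at level $m$, I use $h = h_m$ as the competitor. Write $f - h_m = \sum_{n \geq m}(h_{n+1}-h_n)$, with convergence in $L^2(R)$ because $\|f-h_n\|_{L^2} \lesssim \ell_n \|Df\|_{L^2}$ by Lemma \ref{l:sobolev-poincare}. Each difference $h_{n+1}-h_n$ vanishes on the boundaries of level-$n$ cubes. Applying Lemma \ref{l:sobolev-poincare} on each level-$n$ subcube and summing gives
\begin{align}
\|h_{n+1}-h_n\|_{L^2(R)} \lesssim \ell_n \, \|D(h_{n+1}-h_n)\|_{L^2(R)}, \qquad \ell_n = 5\rho^n .
\end{align}
Cauchy--Schwarz with the geometric weights $\rho^{n-m}$ then yields
\begin{align}
\int_R \Big(\frac{f-h_m}{\ell(R)}\Big)^2 d\mu \lesssim \sum_{n\ge m} \rho^{n-m} \int_R \|D(h_{n+1}-h_n)\|^2 \,d\mu .
\end{align}
Summing over $R \in \cubes^j_m(P_0)$ (disjoint) and then over $m$, and exchanging the order of summation, the right side is at most $\lesssim \sum_n (\Phi_{n+1}-\Phi_n) \leq \|Df\|^2_{L^2(P_0)}$. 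This is the desired bound.

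\textbf{Expected obstacle.} I expect the main difficulty to be the technical bookkeeping rather than the core estimate. Two points need care. First, Lemma \ref{l:sobolev-poincare} is stated for balls, while here it is applied to cubes, which are sandwiched between balls $B(x_Q, c_0\ell) \subseteq Q \subseteq B_Q$; zero boundary values on $Q$ imply zero boundary values on $B_Q$, which handles this. Second, the transfer from $R_Q$ back to $3B_Q$ must keep the dilation factor in \eqref{e:tilde-H} at $3\API$, using $R_Q \subseteq 3B_{Q_0}$. A minor point is that the boundedness of the $\|Dh_m\|$ relies on the minimality of the Dirichlet solution, where the comparability $\|Du\| \sim g_u$ only affects constants.
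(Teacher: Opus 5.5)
Your proposal is correct and follows the paper's proof. It uses the same ingredients: harmonic replacements on nested cubes stitched via Lemma \ref{l:stitch-sobolev}, Lemma \ref{l:sobolev-poincare} on the zero-boundary increments, orthogonality to telescope the energies, and the transfer from $3B_Q$ to $R_Q$ via Corollary \ref{c:C-to-1-update}, with the top levels handled by \eqref{e:upper-bound-H}. The one difference is the summation step: you expand $f-h_m$ as the full series of increments and apply Cauchy--Schwarz with weights $\rho^{n-m}$ (justified by $h_n\to f$ in $L^2$), which is a cleaner substitute for the paper's $K_1/K_2$ splitting in Lemmas \ref{l:conseq} and \ref{l:M-conseq}.
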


We start by showing the following \textit{a priori} weaker result. 

\begin{lem}\label{l:sum-cube}
	Let $\calD$ be a system of Christ-David cubes on $X$. For each $Q_0 \in \calD$ we have 
	\begin{align}
		\sum_{Q \in \cubes(Q_0)} \harD(Q)^2\mu(Q) \lesssim_{\Cdoub,\CPI} \| D f \|_{L^2(Q_0)}^2.
	\end{align}
\end{lem}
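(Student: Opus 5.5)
We prove the lemma by an orthogonality (energy-telescoping) argument in the spirit of \cite{cheeger2012quantitative}. Here $\harD(Q)$ is understood with the infimum over functions Cheeger harmonic in the open set $Q$, normalised by $\ell(Q)$ and averaged over $Q$. For each $Q \in \cubes(Q_0)$, Theorem \ref{t:existence} gives the solution $h_Q$ to the Dirichlet problem for $f$ in $Q$. Thus $h_Q$ is Cheeger harmonic in $Q$ and $\phi_Q := f - h_Q \in N^{1,2}_0(Q)$. By the orthogonality relation \eqref{e:orthogonality},
\begin{align}
	\int_Q \|Df\|^2\,d\mu = \int_Q \|Dh_Q\|^2\,d\mu + \int_Q \|D\phi_Q\|^2\,d\mu .
\end{align}
Since $h_Q$ is an admissible competitor, and Lemma \ref{l:sobolev-poincare} applies to $\phi_Q \in N^{1,2}_0(Q) \subseteq N_0^{1,2}(B_Q)$ (using $\mu(B_Q)\sim\mu(Q)$ by doubling and $\|D\phi_Q\|\sim g_{\phi_Q}$), we get
\begin{align}
	\harD(Q)^2\mu(Q) \lesssim \ell(Q)^{-2}\int_{B_Q}|\phi_Q|^2\,d\mu \lesssim \int_Q \|D\phi_Q\|^2 \,d\mu = \int_Q \|Df\|^2 - \|Dh_Q\|^2\,d\mu .
\end{align}
When $B_Q$ is too large relative to $\diam X$ for Lemma \ref{l:sobolev-poincare}, there are only boundedly many such cubes, and each is handled by the crude bound \eqref{e:upper-bound-H}; this needs care since the lemma has no dilation on the right-hand side.

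The next step is to telescope across generations. Fix $Q$ and let $\mathrm{ch}(Q)$ be its children, which partition $Q$ up to a null set. Define on $Q$ the function $\tilde h_Q := \sum_{R\in \mathrm{ch}(Q)} h_R$, which agrees with $f$ up to $N_0^{1,2}$-perturbations supported in the children. By Lemma \ref{l:stitch-sobolev}, $\tilde h_Q - f = -\sum_R \phi_R$ is a legitimate Sobolev function vanishing off $Q$. The key observation is that $h_Q$ minimises energy in $Q$ among functions $f + N_0^{1,2}(Q)$, while $\sum_R \|Dh_R\|^2_{L^2(R)}$ is the minimal energy subject to the more restrictive constraint that the boundary values on each child are fixed. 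Hence
\begin{align}
	\int_Q\|Dh_Q\|^2 \le \sum_{R\in\mathrm{ch}(Q)}\int_R \|Dh_R\|^2 .
\end{align}
Write $E_k := \sum_{Q\in\cubes_k(Q_0)} \int_Q \|Dh_Q\|^2$, which is nondecreasing in $k$ and bounded by $\|Df\|^2_{L^2(Q_0)}$. The per-cube bound then controls the generation-$k$ sum by $\|Df\|^2_{L^2(Q_0)} - E_k$, which is not summable as it stands. So the bound above is too crude, and I will refine it.

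\textbf{Main obstacle: the refinement.} Instead of comparing $f$ with $h_Q$, compare $\tilde h_Q$ with $h_Q$. Since $\tilde h_Q - h_Q \in N^{1,2}_0(Q)$ and $h_Q$ is harmonic, the same Pythagoras identity gives
\begin{align}
	\int_Q\|D\tilde h_Q\|^2 - \|Dh_Q\|^2 = \int_Q \|D(\tilde h_Q-h_Q)\|^2 ,
\end{align}
and Lemma \ref{l:sobolev-poincare} bounds $\ell(Q)^{-2}\int_Q|\tilde h_Q - h_Q|^2$ by this difference. To bound $\harD(Q)^2\mu(Q)$ we then also need $\ell(Q)^{-2}\int_Q|f-\tilde h_Q|^2 = \sum_R \ell(Q)^{-2}\int_R |\phi_R|^2$, which by Lemma \ref{l:sobolev-poincare} is at most $\rho^{2}\sum_R \int_R\|D\phi_R\|^2$. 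I expect these contributions to be handled by iterating the same decomposition down the tree, with the geometric factors $\rho^{2j}$ making the sum over descendants converge. Consequently
\begin{align}
	\harD(Q)^2\mu(Q) \lesssim (E_{k+1}-E_k)\big|_Q + \sum_{j\ge1}\rho^{2j}\,(E_{k+j+1}-E_{k+j})\big|_Q ,
\end{align}
where $|_Q$ denotes the contribution from cubes inside $Q$. Summing over $Q\in\cubes(Q_0)$ and exchanging the sums, each increment $E_{m+1}-E_m$ appears with total weight $\sum_j\rho^{2j}\lesssim 1$. The total therefore telescopes to $\lesssim \sup_k E_k - E_{k(Q_0)}\le \|Df\|^2_{L^2(Q_0)}$. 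The delicate point I would check first is the exact bookkeeping in this tail term.
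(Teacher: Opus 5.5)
Your argument is correct and is essentially the paper's proof. Your $\tilde h_Q$ (read as $\sum_R h_R\mathds{1}_R$) is $f_{k+1}|_Q$, and $E_k=\|Df_k\|_{L^2(Q_0)}^2$, where $f_k=f+\sum_{Q\in\cubes_k(Q_0)}\phi^Q$ is the function used in the paper. The engine is the same as well: the bound $\cradial_k^{-2}\|f_{k+1}-f_k\|_{L^2(Q)}^2\lesssim\|Df_{k+1}\|_{L^2(Q)}^2-\|Df_k\|_{L^2(Q)}^2$, obtained from orthogonality and Lemma \ref{l:sobolev-poincare}, followed by telescoping of energies.

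The only difference is the bookkeeping for $f-f_k=\sum_{j\ge k}(f_{j+1}-f_j)$. The paper handles it with the stopping sets $K_1,K_2$ (Lemmas \ref{l:conseq} and \ref{l:M-conseq}). You instead use the recursion $a(Q)\le(1+\varepsilon)\rho^2\sum_{R\in\mathrm{ch}(Q)}a(R)+C_\varepsilon(E_{k+1}-E_k)|_Q$, where $a(Q)=\ell(Q)^{-2}\int_Q|f-h_Q|^2\,d\mu$. Iterating it closes the tail you were worried about: the weights are $((1+\varepsilon)\rho^2)^j$ (not exactly $\rho^{2j}$), and after $J$ steps the remainder is $\lesssim((1+\varepsilon)\rho^2)^J\|Df\|_{L^2(Q)}^2\to0$.
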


\begin{rem}
	The main difference between Proposition \ref{p:main} and Lemma \ref{l:sum-cube} is that we sum the coefficients $H_f(Q)$ instead of $H_f(3B_Q).$ At the end of the section, we will show how to obtain Proposition \ref{p:main} from Lemma \ref{l:sum-cube} with an application of the \textit{one-third trick} (Theorem \ref{t:family-cubes}). 
\end{rem}

Fix a system of Christ-David cubes $\calD$ on $X$ and $Q_0 \in \calD.$ By scaling we may assume $Q_0 \in \calD_0$ (so that $\ell(Q_0) = 5$). For $Q \in \calD,$ let 
\begin{align}\label{e:h^Q}
	h^Q = f + \phi^Q \in N^{1,2}(Q)
\end{align}
be a solution to the Dirichlet problem for $f$ in the weak sense of traces in $Q$, where $\phi^Q \in N^{1,2}_0(Q)$ (recall from Theorem \ref{t:existence} the existence of such a solution). It is clear from Definition \ref{d:H-no-osc} that $\harD(Q) \leq \harD(Q ; h^Q)$ for each $Q \in \calD$. Thus, our main focus will be on proving the following. 
\begin{lem}\label{l:main}
	We have 
	\begin{align}\label{e:main-est}
		\sum_{Q \in \cubes(Q_0)} \harD(Q ; h^Q)^2\mu(Q) \lesssim \| D f \|_{L^2(Q_0)}^2 .
	\end{align}
\end{lem}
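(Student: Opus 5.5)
The plan is a telescoping energy argument based on the orthogonality relation \eqref{e:orthogonality}, in the spirit of \cite{cheeger2012quantitative}. For $m \geq 0$ define the glued function
\begin{align}
	u_m \coloneqq f + \sum_{Q \in \cubes_m(Q_0)} \phi^Q ,
\end{align}
so that $u_m = h^Q$ on each $Q \in \cubes_m(Q_0)$. Since each $h^Q$ minimises energy among functions with the same traces, $\|D\phi^Q\|_{L^2(Q)} \lesssim \|Df\|_{L^2(Q)}$. Lemma \ref{l:stitch-sobolev} therefore gives $u_m \in N^{1,2}(X)$. Set
\begin{align}
	a_m \coloneqq \sum_{Q \in \cubes_m(Q_0)} \int_Q \|D h^Q\|^2 \, d\mu = \int_{Q_0} \|Du_m\|^2 \, d\mu \leq \int_{Q_0} \|Df\|^2 \, d\mu,
\end{align}
where the inequality holds because $\phi = -\phi^Q$ is an admissible variation in \eqref{e:min-energy}.

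\textbf{Key energy identity.} Fix $Q \in \cubes_m(Q_0)$. The difference $d_m \coloneqq u_{m+1} - u_m$ equals $\sum_{R} \phi^R - \phi^Q$ on $Q$, where $R$ ranges over the children of $Q$. Hence $d_m|_Q \in N^{1,2}_0(Q)$, and $u_{m+1}$ is a competitor for $h^Q$ with the same traces. Expanding $\int_Q \|D(h^Q + d_m)\|^2$ and using \eqref{e:orthogonality} to kill the cross term gives
\begin{align}
	\int_Q \|Du_{m+1}\|^2 \, d\mu = \int_Q \|Dh^Q\|^2 \, d\mu + \int_Q \|Dd_m\|^2 \, d\mu .
\end{align}
Summing over $Q \in \cubes_m(Q_0)$ yields $\|Dd_m\|^2_{L^2(Q_0)} = a_{m+1} - a_m$. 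The sum over $m$ then telescopes:
\begin{align}
	\sum_{m \geq 0} \|Dd_m\|_{L^2(Q_0)}^2 \leq \|Df\|^2_{L^2(Q_0)} .
\end{align}

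\textbf{Expressing $f - h^Q$.} On $Q \in \cubes_k(Q_0)$ we write
\begin{align}
	f - h^Q = \sum_{j \geq 0} d_{k+j} .
\end{align}
The series converges in $L^2(Q)$ because $f - u_m = -\sum_{Q' \in \cubes_m} \phi^{Q'}$, and Lemma \ref{l:sobolev-poincare} applied on each $B_{Q'}$ gives
\begin{align}
	\|f - u_m\|_{L^2(Q_0)} \lesssim \rho^m \|Df\|_{L^2(Q_0)} \to 0 .
\end{align}
Note that $N^{1,2}_0(Q') \subseteq N^{1,2}_0(B_{Q'})$. The radius restriction $r < \diam(X)/3$ only affects boundedly many top generations, and those can be handled separately by the Poincar\'e inequality.

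\textbf{Summing the estimate.} Lemma \ref{l:sobolev-poincare}, applied to $d_{k+j}$ on each cube of generation $k+j$ inside $Q$, gives $\|d_{k+j}\|_{L^2(Q)} \lesssim \rho^{k+j}\|Dd_{k+j}\|_{L^2(Q)}$. By Cauchy--Schwarz with geometric weights,
\begin{align}
	\frac{\|f-h^Q\|_{L^2(Q)}^2}{\ell(Q)^2} \lesssim \Big(\sum_j \rho^{j}\|Dd_{k+j}\|_{L^2(Q)}\Big)^2 \lesssim \sum_j \rho^{j}\|Dd_{k+j}\|^2_{L^2(Q)} .
\end{align}
The left side is comparable to $\harD(Q;h^Q)^2\mu(Q)$, with the normalisation adjusted by doubling. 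Summing over $Q \in \cubes_k(Q_0)$ (the cubes are disjoint) and then over $k$, and exchanging the order of summation, bounds the left side of \eqref{e:main-est} by
\begin{align}
	\sum_m \Big(\sum_{j\ge 0}\rho^j\Big)\|Dd_m\|^2_{L^2(Q_0)} \lesssim \|Df\|^2_{L^2(Q_0)} .
\end{align}

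\textbf{Main obstacle.} I expect the delicate step to be the orthogonality/gluing step: one must verify that $u_{m+1}-h^Q$ genuinely lies in $N^{1,2}_0(Q)$, which requires the zero-trace gluing of Lemma \ref{l:stitch-sobolev} with cubes that are open but may have rough boundaries. One must also check that the energy identity holds with the Cheeger inner product.
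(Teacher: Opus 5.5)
Your proof is correct. Its analytic core is the same as the paper's: the glued functions $u_m$ (the paper's $f_k$), the Pythagorean identity $\|Dd_m\|_{L^2(Q_0)}^2=\|Du_{m+1}\|_{L^2(Q_0)}^2-\|Du_m\|_{L^2(Q_0)}^2$ coming from \eqref{e:orthogonality}, the telescoping bound by $\|Df\|_{L^2(Q_0)}^2$, and Lemma \ref{l:sobolev-poincare} applied to the zero-boundary increments. You differ in how the errors $\|(f-f_k)/\cradial_k\|_{L^2(Q_0)}$ are converted into increments.

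The paper fixes a horizon $M$, writes $f-f_k=(f-f_M)+\sum_{j=k}^{M-1}(f_{j+1}-f_j)$, and estimates $\ell^1$ sums of the errors, squaring only at the end via $\sum a_i^2\le(\sum a_i)^2$. Since $(\sum_j b_j)^2$ is not controlled by $\sum_j b_j^2$, it must split the scales into $K_1$ and its complement and absorb the increments along runs in $K_1$ (Lemmas \ref{l:conseq} and \ref{l:M-conseq}). It then bounds the residual $\|(f-f_M)/\cradial_M\|^2$ by $\|Df\|^2-\|Df_M\|^2$ through the same orthogonality.

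You square first and apply Cauchy--Schwarz with weights $\rho^j$, a discrete Hardy/Schur estimate. After exchanging the sums, each $\|Dd_m\|_{L^2(Q_0)}^2$ carries a bounded total weight, so no case analysis is needed. The price is passing to the infinite series $f-h^Q=\sum_{j\ge0}d_{k+j}$, which requires $u_m\to f$ in $L^2(Q_0)$. Your justification via $\|f-u_m\|_{L^2(Q_0)}\lesssim\rho^m\|Df\|_{L^2(Q_0)}$ is correct. Alternatively, you can keep $f-f_M$ as one more summand in your weighted Cauchy--Schwarz. That yields \eqref{e:L2-f} in one line, avoids the limit, and shows the paper's stopping argument can be dispensed with.

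Your caveat about the restriction $0<r_B<\tfrac13\diam X$ in Lemma \ref{l:sobolev-poincare} concerns a point the paper passes over silently. Note where it actually bites: in the summation step, where that lemma is used at every generation, the top generations need separate treatment. In the convergence step only the tail matters, so the restriction is harmless there.
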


We begin the proof of Lemma \ref{l:main} in earnest. Let $\Z_{\geq 0}$ denote the set of non-negative integers. Define, for each integer $k \in \Z_{\geq 0},$ a function $f_k \colon X \to \R$ by 
\begin{align}\label{e:f_k}
	f_k = f +  \sum_{Q \in \cubes_k(Q_0)}  \phi^Q . 
\end{align}
A basic property of minimal weak upper gradients is that $g_{u+v} \leq g_u + g_v$ and $g_{-u} = g_u$ (see \cite{heinonen2015sobolev}). Using this, along with the fact that each $h^Q = f + \phi^Q$ is a solution to the Dirichlet problem for $f$ in $Q$, equations \eqref{e:deriv-comp-upper-grad} and \eqref{e:min-energy} imply 
\begin{align}
	\| g_{\phi^Q} \|_{L^2(Q)} \leq  \| g_{h^Q} \|_{L^2(Q)} + \| g_f\|_{L^2(Q)} \lesssim \| g_f\|_{L^2(Q)}. 
\end{align}
Thus, by applying Lemma \ref{l:stitch-sobolev} with data $f, \ \{Q\}_{Q \in \cubes_{k}}$ and $\{\phi_Q\}_{Q \in \cubes}$, we have 
\[f_k \in N^{1,2}(X).\]
Moreover, since $\phi^Q \in N^{1,2}_0(R)$ whenever $Q \in \cubes_k \cup \cubes_{k+1}$ and $R \in \cubes_{k}$ are such that $Q \subseteq R$, it follows that 
\begin{align}\label{e:zero-boundary}
	f_{k+1}|_R - f_k|_R = \sum_{\substack{Q \in \cubes_k \cup \cubes_{k+1} \\ Q \subseteq R}} \phi^Q \in N^{1,2}_0(R).
\end{align}
Finally, note that the left-hand side of \eqref{e:main-est} can be written as 
\begin{align}\label{e:discrete-cont}
	\sum_{Q \in \cubes(Q_0)} \harD(Q ; h^Q)^2\mu(Q)  = \sum_{k=0}^\infty \left\| \frac{f - f_k}{\cradial_k} \right\|^2_{L^2(Q_0)}.
\end{align}

Recall the role of the constant $\rho$ from Theorem \ref{cubes}. For each integer $k \in \Z,$ let 
\begin{align}\label{e:crad}
	\cradial_k \coloneqq 5\rho^{k}
\end{align}
Then, set
\begin{align}
	K_1 &= \left\{ k \in \Z_{\geq 0} : \left\| \frac{f_{k+1} - f_k}{\cradial_k} \right\|_{L^2(Q_0)}  \leq \frac{1}{4} \left\| \frac{f - f_k}{\cradial_k} \right\|_{L^2(Q_0)} \right\}. 
\end{align}

\begin{lem}\label{l:conseq}
	Suppose that $\{k,k+1,\dots,M-1\} \subseteq K_1$. Then, we have 
	\begin{align}
		\sum_{i=k}^M \left\| \frac{f - f_{i}}{\cradial_{i}} \right\|_{L^2(Q_0)}^2 \lesssim \left\| \frac{f - f_{M}}{\cradial_{M}}\right\|_{L^2(Q_0)}^2. 
	\end{align}
\end{lem}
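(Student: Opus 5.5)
Write $a_i \coloneqq \| (f - f_i)/\cradial_i \|_{L^2(Q_0)}$. The plan is to show that, along a run of indices in $K_1$, the $a_i$ grow geometrically as $i$ increases. The sum is then dominated by its last term $a_M^2$.

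\textbf{Step 1: one-step comparison.} Fix $i \in \{k,\dots,M-1\} \subseteq K_1$. The triangle inequality gives
\begin{align}
	\|f - f_{i+1}\|_{L^2(Q_0)} \geq \|f - f_i\|_{L^2(Q_0)} - \|f_{i+1} - f_i\|_{L^2(Q_0)} \geq \tfrac{3}{4}\|f - f_i\|_{L^2(Q_0)},
\end{align}
where the last inequality is exactly the defining condition of $K_1$ (after multiplying through by $\cradial_i$). Next use $\cradial_{i+1} = \rho\,\cradial_i$. Dividing by $\cradial_{i+1}$ yields
\begin{align}
	a_{i+1} \geq \tfrac{3}{4}\rho^{-1} a_i .
\end{align}

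\textbf{Step 2: geometric domination.} Iterating Step 1 gives $a_i \leq (\tfrac{4\rho}{3})^{M-i} a_M$ for $k \leq i \leq M$. Summing the squares,
\begin{align}
	\sum_{i=k}^M a_i^2 \leq a_M^2 \sum_{j \geq 0} \left(\tfrac{4\rho}{3}\right)^{2j},
\end{align}
which is $\lesssim a_M^2$ provided $4\rho/3 < 1$, i.e.\ $\rho < 3/4$.

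\textbf{Main obstacle: the size of $\rho$.} Step 2 needs $\rho$ small enough that the ratio $4\rho/3$ is strictly below $1$. In the Christ–David construction, $\rho$ can always be taken small: if necessary, pass to a subsequence of generations, or recall that Theorem \ref{cubes} allows fixing $\rho$ sufficiently small, say $\rho \leq 1/2$. I would simply note this choice, giving ratio $\leq 2/3$ and a constant independent of $k$ and $M$. Beyond this, the argument is purely algebraic; it does not use harmonicity or the zero-boundary structure \eqref{e:zero-boundary}. Those will presumably be needed later to handle indices outside $K_1$.
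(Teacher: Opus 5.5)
Your argument is correct but takes a different route from the paper. The paper first reduces to the $\ell^1$ bound $\sum_{i=k}^M a_i \lesssim a_M$. It then writes $f-f_i=(f-f_M)+\sum_{j=i}^{M-1}(f_{j+1}-f_j)$ for all $i$ at once and swaps the order of summation. Next it sums the geometric series $\sum_i \rho^{M-i}$ and $\sum_{i\le j}\rho^{j-i}$, bounding each by $2$, which tacitly uses $\rho\le 1/2$. Applying the $K_1$ condition gives $\sum a_i \le 2a_M+\tfrac12\sum_{j=k}^{M-1}a_j$, and the last sum is absorbed into the left-hand side.

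You instead apply the reverse triangle inequality one step at a time. This gives the stronger pointwise bound $a_i\le(4\rho/3)^{M-i}a_M$ and needs neither the $\ell^1$ reduction nor the absorption step.

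Both arguments need exactly the same smallness of $\rho$. The paper's absorption requires $\frac{1}{4(1-\rho)}<1$, i.e.\ $\rho<3/4$, which matches your condition $4\rho/3<1$. So the restriction you flag as the main obstacle is not an extra assumption on your part; the paper imposes it implicitly too. It is harmless, since $\rho$ can be fixed small once and for all when the Christ--David system is chosen.
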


\begin{proof}
	Without loss of generality, we may assume that $k = 0$. Squaring both sides and using the inequality $\sum_{i=0}^M a_i^2 \leq (\sum_{i=0}^M a_i)^2$ for $a_i \geq 0,$ it suffices to show 
	\begin{align}\label{e:L1-f}
		\sum_{i=0}^M \left\| \frac{f - f_{i}}{\cradial_i} \right\|_{L^2(Q_0)} \lesssim \left\| \frac{f - f_{M}}{\cradial_M}\right\|_{L^2(Q_0)} .
	\end{align}
Using the triangle inequality, swapping the order of summation, summing over a geometric series and using the definition of $K_1,$ we estimate the left-hand side of \eqref{e:L1-f} from above by
\begin{align}
	 &\hspace{-2em}\sum_{i=0}^M \left[ \left( \frac{\cradial_M}{\cradial_i} \right)\left\| \frac{f - f_{M}}{\cradial_M} \right\|_{L^2(Q_0)} + \sum_{j=i}^{M-1} \left( \frac{\cradial_j}{\cradial_i} \right)\left\| \frac{f_{j+1} - f_{j}}{\cradial_j} \right\|_{L^2(Q_0)} \right] \\
	& = \sum_{i=0}^M \left( \frac{\cradial_M}{\cradial_i} \right) \left\| \frac{f - f_{M}}{\cradial_M} \right\|_{L^2(Q_0)} + \sum_{j=0}^{M-1}\sum_{i=0}^j \left( \frac{\cradial_j}{\cradial_i} \right) \left\| \frac{f_{j+1} - f_{j}}{\cradial_j} \right\|_{L^2(Q_0)} \\
	&\leq 2 \left\| \frac{f - f_{M}}{\cradial_M} \right\|_{L^2(Q_0)} + 2\sum_{j=0}^{M-1} \left\| \frac{f_{j+1} - f_{j}}{\cradial_j} \right\|_{L^2(Q_0)} \\
	&\leq 2 \left\| \frac{f - f_{M}}{\cradial_M} \right\|_{L^2(Q_0)} + \frac{1}{2} \sum_{j=0}^{M-1} \left\| \frac{f - f_{j}}{\cradial_j} \right\|_{L^2(Q_0)}.
\end{align}
Rearranging the above inequality gives \eqref{e:L1-f}. 
\end{proof}

\begin{lem}\label{l:M-conseq}
	For each $M \in \Z_{\geq 0},$ we have 
	\begin{align}\label{e:L2-f}
		\sum_{k=0}^{M} \left\| \frac{f - f_{k}}{\cradial_{k}} \right\|_{L^2(Q_0)}^2 \lesssim \left\| \frac{f - f_{M}}{\cradial_M} \right\|_{L^2(Q_0)}^2 + \sum_{k = 0}^{M - 1} \left\| \frac{f_{k+1} - f_{k}}{\cradial_{k}} \right\|_{L^2(Q_0)}^2.
	\end{align}
\end{lem}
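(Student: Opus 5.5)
We need to prove Lemma \ref{l:M-conseq} from Lemma \ref{l:conseq} by decomposing $\{0,\dots,M\}$ into maximal runs. Write $a_k = \| (f-f_k)/\cradial_k\|_{L^2(Q_0)}$ and $b_k = \|(f_{k+1}-f_k)/\cradial_k\|_{L^2(Q_0)}$. For $k \notin K_1$ we have, by definition of $K_1$, $a_k < 4 b_k$, so $a_k^2 \le 16 b_k^2$.

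The plan is to partition $\{0,1,\dots,M\}$ into consecutive blocks, each ending either at an index $j \le M-1$ with $j \notin K_1$, or at $M$. Concretely, let $j_1 < j_2 < \dots < j_m$ be the indices in $\{0,\dots,M-1\}\setminus K_1$, and set $j_{m+1} = M$, $j_0 = -1$. Each block $\{j_{l-1}+1, \dots, j_l\}$ then has all indices except possibly the last lying in $K_1$; that is, $\{j_{l-1}+1,\dots,j_l - 1\} \subseteq K_1$. Lemma \ref{l:conseq}, applied with $k = j_{l-1}+1$ and $M$ replaced by $j_l$, gives
\begin{align}
\sum_{i=j_{l-1}+1}^{j_l} a_i^2 \lesssim a_{j_l}^2 .
\end{align}
If the block is empty of $K_1$ indices (i.e.\ $j_l = j_{l-1}+1$), the bound is trivial.

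Summing over blocks, we get $\sum_{k=0}^M a_k^2 \lesssim a_M^2 + \sum_{l=1}^{m} a_{j_l}^2$. Since each $j_l \notin K_1$ with $j_l \le M-1$, we have $a_{j_l}^2 \le 16 b_{j_l}^2$, and so
\begin{align}
\sum_{l=1}^m a_{j_l}^2 \le 16 \sum_{k=0}^{M-1} b_k^2 .
\end{align}
This gives \eqref{e:L2-f}. The constant in Lemma \ref{l:conseq} is uniform: it depends only on $\rho$, through the geometric series, and not on the length of the run. Hence no loss accumulates across blocks.

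The only point requiring care is this uniformity. The proof of Lemma \ref{l:conseq} shows that the implicit constant depends only on $\rho$ and the factor $\frac14$ in the definition of $K_1$, which confirms it. The block edge cases (the empty set $K_1$, and $M \notin K_1$ or $M$ itself) are handled by the convention $j_{m+1} = M$.
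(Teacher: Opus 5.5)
Your proof is correct and is essentially the paper's argument: you split $\{0,\dots,M\}$ into runs ending at indices outside $K_1$ (or at $M$), apply Lemma \ref{l:conseq} on each run, and use $a_k<4b_k$ for $k\notin K_1$. Your bookkeeping is slightly cleaner than the paper's. You enumerate only the non-$K_1$ indices in $\{0,\dots,M-1\}$, append $M$ as the final endpoint, and take $j_0=-1$. The paper instead enumerates $K_2\cap[0,M]$, so when $M\in K_2$ it must tacitly keep $a_M^2$ as is, rather than bounding it by $b_M^2$, which does not appear on the right-hand side.
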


\begin{proof}
	Let $\{k_j\}_{j=1}^N$ be an enumeration of the elements in $K_2 \coloneqq \Z_{\geq 0} \setminus K_1$ between $0$ and $M$ (inclusive), ordered such that $k_j < k_{j+1}$ for all $j.$ Then, the left-hand side of \eqref{e:L2-f} can be written as 
	\begin{align}\label{e:split-conseq}
		 \hspace{1.8em} \sum_{k=0}^{k_1} \left\| \frac{f - f_{k}}{\cradial_{k}} \right\|_{L^2(Q_0)}^2+ \sum_{j = 1}^{N-1} \sum_{k = k_{j} +1}^{k_{j+1}} \left\| \frac{f - f_{k}}{\cradial_{k}} \right\|_{L^2(Q_0)}^2+ \sum_{k = k_N + 1}^{M} \left\| \frac{f - f_{k}}{\cradial_{k}} \right\|_{L^2(Q_0)}^2.
	\end{align}
	Observe that 
	\begin{align}
		\sum_{k=0}^{k_1} \left\| \frac{f - f_{k}}{\cradial_{k}} \right\|_{L^2(Q_0)}^2 \lesssim \left\| \frac{f - f_{k_1}}{\cradial_{k_1}} \right\|_{L^2(Q_0)}^2.
	\end{align}
	Indeed, if $0 \in K_2,$ then $k_1 = 0$ and the estimate is trivial. If $0 \in K_1$, then $k \in K_1$ for all $0 \leq k < k_1$ and the above estimate follows immediately from Lemma \ref{l:conseq}. By applying Lemma \ref{l:conseq} directly to the remaining terms in \eqref{e:split-conseq}, and using the definition of $K_2$ (being the complement of $K_1$), we now have 
	\begin{align}
		\sum_{k=0}^{M} \left\| \frac{f - f_{k}}{\cradial_{k}} \right\|_{L^2(Q_0)}^2 &\lesssim \sum_{j =1}^N  \left\| \frac{f - f_{k_j}}{\cradial_{k_j}} \right\|_{L^2(Q_0)}^2 +  \left\| \frac{f - f_{M}}{\cradial_M} \right\|_{L^2(Q_0)}^2 \\
		&\lesssim \sum_{j =1}^N  \left\| \frac{f_{k_{j}+1} - f_{k_j}}{\cradial_{k_j}} \right\|_{L^2(Q_0)}^2 +  \left\| \frac{f - f_{M}}{\cradial_M} \right\|_{L^2(Q_0)}^2.
	\end{align}
	Inequality \eqref{e:L2-f} is immediate from this. 
\end{proof}

Recalling \eqref{e:discrete-cont}, the following lemma finishes the proof of Lemma \ref{l:main} (hence, also Lemma \ref{l:sum-cube}) after taking $M \to \infty.$ 

\begin{lem}
	For each $M \in \Z_{\geq 0},$ we have 
	\begin{align}
		\sum_{k =0}^M \left\| \frac{f-f_k}{\cradial_k} \right\|^2_{L^2(Q_0)} \lesssim \| D f \|_{L^2(Q_0)}^2. 
	\end{align}
\end{lem}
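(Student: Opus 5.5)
By Lemma \ref{l:M-conseq}, it suffices to bound the two terms on its right-hand side by $\|Df\|_{L^2(Q_0)}^2$, uniformly in $M$:
\begin{align}
	\left\| \frac{f - f_{M}}{\cradial_M} \right\|_{L^2(Q_0)}^2 \quad\text{and}\quad \sum_{k = 0}^{M - 1} \left\| \frac{f_{k+1} - f_{k}}{\cradial_{k}} \right\|_{L^2(Q_0)}^2 .
\end{align}

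\textbf{The first term.} Here $f - f_M = -\sum_{Q \in \cubes_M(Q_0)} \phi^Q$ with $\phi^Q \in N^{1,2}_0(Q)$, and the cubes are disjoint. On each $Q$, with $Q \subseteq B_Q$ of radius $\cradial_M$, the Sobolev inequality of Lemma \ref{l:sobolev-poincare} gives $\int_Q |\phi^Q|^2 \, d\mu \lesssim \cradial_M^2 \int_Q g_{\phi^Q}^2 \, d\mu$. By doubling, averaging over $B_Q$ versus $Q$ costs only constants. From the energy-minimality bound already established, $\|g_{\phi^Q}\|_{L^2(Q)} \lesssim \|g_f\|_{L^2(Q)}$. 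Summing over $Q$ and using $g_f \sim \|Df\|$ gives the bound. The condition $r_B < \diam(X)/3$ needs care, but for small cubes this is automatic, and the top level can be handled by the same inequality or directly.

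\textbf{The second term.} This is the main part, and the key is an orthogonality (Pythagoras) identity. For $R \in \cubes_k(Q_0)$ we have $f_k|_R = h^R$, which is harmonic in $R$. By \eqref{e:zero-boundary}, $f_{k+1} - f_k \in N^{1,2}_0(R)$. Using the orthogonality characterisation \eqref{e:orthogonality} of harmonicity with test function $f_{k+1} - f_k$,
\begin{align}
	\int_R \|Df_{k+1}\|^2 \, d\mu = \int_R \|Df_k\|^2 \, d\mu + \int_R \|D(f_{k+1}-f_k)\|^2 \, d\mu.
\end{align}
Summing over $R \in \cubes_k(Q_0)$ gives $\|D(f_{k+1}-f_k)\|_{L^2(Q_0)}^2 = \|Df_{k+1}\|_{L^2(Q_0)}^2 - \|Df_k\|_{L^2(Q_0)}^2$. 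One should double-check that $\|Df_k\|^2$ is additive over the cubes, i.e.\ that there is no mass on the boundaries; this holds since the cube boundaries are $\mu$-null and $D$ is local. Applying Lemma \ref{l:sobolev-poincare} on each $R$ as in the first step,
\begin{align}
	\left\| \frac{f_{k+1}-f_k}{\cradial_k}\right\|_{L^2(Q_0)}^2 \lesssim \|Df_{k+1}\|_{L^2(Q_0)}^2 - \|Df_k\|_{L^2(Q_0)}^2 .
\end{align}
The sum over $k$ then telescopes to at most $\|Df_M\|_{L^2(Q_0)}^2 - \|Df_0\|^2_{L^2(Q_0)} \le \|Df_M\|_{L^2(Q_0)}^2$. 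Finally, $f_M$ minimizes the energy cube by cube, so $\|Df_M\|_{L^2(Q)} \le \|Df\|_{L^2(Q)}$ (comparing the energy of $h^Q$ with that of $f$ via \eqref{e:min-energy}); summing over $Q$ bounds this by $\|Df\|_{L^2(Q_0)}^2$.

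\textbf{Main obstacle.} The delicate point is justifying the Pythagoras identity. This requires that the inner-product structure of $D$ makes energy minimization equivalent to orthogonality, that $f_{k+1}-f_k$ restricted to $R$ is a legitimate element of $N^{1,2}_0(R)$ in the space-$X$ sense, and that the Sobolev inequality applies on the non-ball set $R$ (via $R \subseteq B_R$ with comparable measure). Everything else is bookkeeping.
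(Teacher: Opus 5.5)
Your proposal is correct and follows essentially the paper's argument: the reduction via Lemma \ref{l:M-conseq}, the Sobolev inequality applied to the zero-trace differences $f_{k+1}-f_k$ on each $R\in\cubes_k(Q_0)$, the Pythagoras identity coming from orthogonality, and telescoping. The only difference is in the tail term $\|(f-f_M)/\cradial_M\|^2_{L^2(Q_0)}$. The paper treats it with the same Pythagoras identity, obtaining $\lesssim\|Df\|^2_{L^2(Q_0)}-\|Df_M\|^2_{L^2(Q_0)}$, so that everything telescopes to $\|Df\|^2_{L^2(Q_0)}-\|Df_0\|^2_{L^2(Q_0)}$. You instead bound it directly via $\|g_{\phi^Q}\|_{L^2(Q)}\lesssim\|g_f\|_{L^2(Q)}$, and then use cube-wise energy minimality to get $\|Df_M\|_{L^2(Q_0)}\le\|Df\|_{L^2(Q_0)}$; this is equally valid.
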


\begin{proof}
	Fix $k \in \Z_{\geq 0}$ for the moment. By \eqref{e:zero-boundary}, we have that $f_{k+1}|_Q - f_k|_Q \in N^{1,2}_0(Q)$ for each $Q \in \cubes_k.$ Thus, by applying Lemma \ref{l:sobolev-poincare} and using the orthogonality condition \eqref{e:orthogonality} (since $f_k$ is Cheeger harmonic on each $Q \in \cubes_k$), we have 
	\begin{align}
		\begin{split}\label{e:D-orth}
		\left\| \frac{f_{k+1} - f_k}{\cradial_k} \right\|_{L^2(Q)}^2 &\lesssim \| D f_{k+1} - D f_k \|_{L^2(Q)}^2  \\
		&=  \| D f_{k+1} \|_{L^2(Q)}^2 - 2\int_Q \langle Df_{k+1} , D f_{k} \rangle \, d\mu  + \| D f_k \|_{L^2(Q)}^2 \\
		&= \| D f_{k+1} \|_{L^2(Q)}^2 - \| D f_k \|_{L^2(Q)}^2.
	\end{split}
	\end{align}
	Summing over $Q \in \calD_{k}$ then gives 
	\begin{align}
		\left\| \frac{f_{k+1} - f_k}{\cradial_k} \right\|_{L^2(Q_0)}^2 = \| D f_{k+1} \|_{L^2(Q_0)}^2  -\| D f_k \|_{L^2(Q_0)}^2. 
	\end{align}
	Similarly,
	\begin{align}
		\left\| \frac{f - f_M}{\cradial_k} \right\|_{L^2(Q_0)}^2 \lesssim \ \| D f \|_{L^2(Q_0)}^2 - \| D f_M \|_{L^2(Q_0)}^2.
	\end{align}
	Combining Lemma \ref{l:M-conseq} with the previous two estimates and summing over a telescoping series, we get   
	\begin{align}
		\sum_{k =0}^M \left\| \frac{f-f_k}{\cradial_k} \right\|^2_{L^2(Q_0)} \lesssim \| D f \|_{L^2(Q_0)}^2 - \|D f_0\|_{L^2(Q_0)}^2 \leq \| D f \|_{L^2(Q_0)}^2.
	\end{align}
\end{proof}

\begin{proof}[Proof of Proposition \ref{p:main}]
	Let $\calD$ be a system of Christ-David cubes on $X$ and $Q_0 \in \calD.$ Assume without loss of generality that $Q_0 \in \cubes_0$. Let 
	\begin{align}
		N, k^* \in \N, \quad \{\cubes^j\}_{j=1}^N, \quad  \{R_Q\}_{Q \in \cubes} \quad \mbox{and} \quad \{Q_0^{j,i}\}_{i \in I_j}  
	\end{align}
	satisfy the conclusions of Corollary \ref{c:C-to-1-update}. It follows from \eqref{e:C-to-1-update-1} and Lemma \ref{l:bounded-and-semicont} that 
	 \begin{align}\label{e:1}
	 	\harD(3B_Q) \lesssim_{\Cdoub} \harD(R_Q). 
	 \end{align}
	
	By \eqref{e:upper-bound-H} and the fact that the balls $\{3\API B_Q \colon Q \in \cubes_k\}$ have bounded overlap (with constant depending on $\API$ and $\Cdoub$) and are contained in $3\API B_{Q_0}$, we have
	\begin{align}
		\sum_{k=0}^{k^*} \sum_{Q \in \cubes_k(Q_0)}\harD(3B_Q)^2\mu(Q) \lesssim_{\API, \Cdoub} \sum_{k=0}^{k^*} \| Df \|_{L^2(3\API B_{Q_0})}^2 \lesssim  \| Df \|_{L^2(3\API B_{Q_0})}^2.
	\end{align}
	To deal with the sum over smaller cubes, we apply \eqref{e:C-to-1-update-2}, \eqref{e:C-to-1-update-3}, \eqref{e:1} and Lemma \ref{l:sum-cube} to get 
	\begin{align}
		\sum_{k=k^*}^\infty \sum_{Q \in \cubes_k(Q_0)} \harD(3B_Q)^2\mu(Q) &\lesssim_{\Cdoub} \sum_{k=k^*}^\infty \sum_{Q \in \cubes_k(Q_0)} \harD(R_Q)^2\mu(Q)   \\ &\lesssim_{\Cdoub} \sum_{j=1}^N \sum_{i \in I_j} \sum_{R \in \cubes^j(Q_0^{j,i})} \harD(R)^2 \mu(Q) \\
		&\lesssim_{\Cdoub,\CPI} \sum_{j=1}^N \sum_{i \in I_j} \| Df \|_{L^2(Q_0^{j,i})}^2 \lesssim_{\Cdoub} \|Df \|_{L^2(3B_{Q_0})}^2,
	\end{align}
	which finishes the proof. 
\end{proof}

\begin{lem}\label{l:C-implies-B}
	Theorem \ref{t:PI} implies the only if statement in Theorem \ref{t:RCD-2}. 
\end{lem}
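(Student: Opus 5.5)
We have to show: if $X$ is $\RCD(K,N)$ (bounded when $K<0$) and $f \in N^{1,2}(X)$, then $H(f) < \infty$ and
\begin{align}
	\left\| \frac{f - \langle f \rangle_X}{\diam(X)} \right\|_{L^2(X)}^2 + H(f) \lesssim_{\cRCD(X)} \|\nabla f\|_{L^2(X)}^2 .
\end{align}
The plan is to apply Theorem \ref{t:PI} with the Cheeger derivative $D = \nabla$, which is legitimate for three reasons. First, $X$ is doubling with constant depending only on $\cRCD(X)$ (Bishop--Gromov). Second, by Theorem \ref{t:RCD-PI}, $X$ supports a weak $(1,2)$-Poincar\'e inequality with $\CPI = \CPI(\cRCD(X))$ and $\API = 2$. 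Third, $\nabla$ is a Cheeger derivative in the sense of Definition \ref{d:cheeger-derivative}, since $|\nabla u| = g_u$ a.e. by \eqref{e:comp-upper-grad}, and $\har{f} = \har{\nabla,f}$ by definition. All implicit constants in \eqref{e:theorem-PI} then depend only on $\cRCD(X)$, and the dilation constant $A$ is an absolute multiple of $\API = 2$.

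\textbf{Case $\diam(X) < \infty$.} Fix $z \in X$ and $R \le \diam(X)$ close to $\diam(X)$. Then $B(z,R)$ exhausts $X$ as $R \uparrow \diam(X)$; indeed $B(z,R) = X$ once $R > \sup_y d(z,y)$, with a small adjustment if the supremum is attained. Moreover $AB(z,R) \subseteq X$ trivially. So \eqref{e:theorem-PI} gives
\begin{align}
	\left\| \frac{f - \langle f \rangle_{B(z,R)}}{R} \right\|_{L^2(B(z,R))}^2 + \int_0^R \int_{B(z,R)} \har{f}(x,r)^2 \, \frac{d\mu\, dr}{r} \lesssim \|\nabla f\|_{L^2(X)}^2 .
\end{align}
We then let $R \uparrow \diam(X)$. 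Monotone convergence handles the $H$-term, since the integrand is nonnegative and the domain increases. For the first term we use $\langle f \rangle_{B(z,R)} \to \langle f \rangle_X$ (dominated convergence, $f \in L^2 \subset L^1$ on a finite-measure space) together with Fatou's lemma; alternatively we may take $R$ slightly larger than the radius where $B(z,R) = X$ and use $R \sim \diam(X)$.

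\textbf{Case $\diam(X) = \infty$.} This forces $K \ge 0$. The target inequality reduces to $H(f) \lesssim \|\nabla f\|^2_{L^2(X)}$, since the first term is interpreted as zero. We apply the local estimate on $B(z,R)$, drop the first term, and let $R \to \infty$ by monotone convergence. The integrand is supported on pairs with $r<R$ and $x \in B(z,R)$, and these sets increase to $X \times (0,\infty)$.

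Finiteness of $H(f)$ follows since $f \in N^{1,2}(X)$ gives $\|\nabla f\|_{L^2} = \|g_f\|_{L^2} < \infty$. The only delicate points are bookkeeping: that $D=\nabla$ qualifies as a Cheeger derivative with constants depending only on $\cRCD(X)$, and the limiting argument in the bounded case, where balls of radius near $\diam(X)$ cover $X$ and the first term must converge. Neither should be a real obstacle.
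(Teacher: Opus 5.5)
Your proposal is correct and follows the paper's proof. The paper simply applies Theorem \ref{t:PI} with $D=\nabla$, using that $\har{f}=\har{\nabla,f}$, that $\nabla$ is a Cheeger derivative, and that $X$ is doubling and supports a weak $(1,2)$-Poincar\'e inequality with constants depending only on $\cRCD(X)$ (Theorem \ref{t:RCD-PI}).

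The passage $R\to\diam(X)$ that you spell out is left implicit in the paper. In the bounded case, $B(z,R)=X$ with $R\le\diam(X)$ can fail, for example on a round sphere. It is therefore cleaner to use that spheres are $\mu$-null in $\RCD(K,N)$ spaces, or to cover $X$ by boundedly many balls $B(y_i,\diam(X)/2)$.
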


\begin{proof}
	Let $(X,d,\mu)$ be an $\RCD(K,N)$ space satisfying the hypotheses of Theorem \ref{t:RCD} and let $f \in N^{1,2}(X)$. Recalling from Definition \ref{d:H-no-osc} and Section \ref{s:prelims} that $\har{f} \coloneqq \har{\nabla,f}$ and $\nabla$ is a Cheeger derivative, it suffices to know that $(X,d,\mu)$ is doubling and supports a weak Poincar\'e inequality with the correct dependencies. This follows from Theorem \ref{t:RCD-PI}.
\end{proof}

\section{Estimating the oscillatory $H$-coefficients in $\RCD(K,N)$ spaces}\label{s:RCD}

In this section, we prove the \textit{only if} directions of Theorem \ref{t:RCD} and Theorem \ref{t:RCD-2}. As we argued at the beginning of Section \ref{s:PI}, it is enough to show that that the integral of the $H$-coefficients can be controlled by the derivative. By appealing to \eqref{H-estimate} and taking $R \to \diam(X)$, this immediate from the following results.

\begin{thm}\label{t:RCD3}
	Let $(X,d,\mu)$ be an $\RCD(K,N)$ space with $K \in \R$ and $N \in (1,\infty)$, and assume that $X$ is bounded in case that $K < 0$. Let $f \in N^{1,2}(X)$. Then, for each $z \in X$ and $0 < R < \diam(X),$ we have 
	\begin{align}
		\int_0^R \int_{B(z,R)} \harrcd{f}(x,r)^2 \, \frac{dxdr}{r} \lesssim_{\cRCD(X)} \| \nabla f \|_{L^2(B(z,AR))}^2,
	\end{align}
	with $A = 120\rho^{-1} \API$ and $\cRCD(X)$ as in \eqref{e:cRCD(X)}.
\end{thm}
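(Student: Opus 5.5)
We follow the scheme of Section \ref{s:PI}. By Lemma \ref{l:continuous-to-discrete} (1) (the $H^{\rm RCD}$ version) and the inclusion $AB_Q \subseteq 20A\rho^{-1}B(z,R)$ for the top-level cubes meeting $B(z,R)$, it suffices to prove the discrete analogue of Proposition \ref{p:main}:
\begin{align}
	\sum_{Q \in \cubes(Q_0)} \harrcd{f}(3B_Q)^2\mu(Q) \lesssim_{\cRCD(X)} \|\nabla f\|^2_{L^2(CB_{Q_0})}.
\end{align}
Using Corollary \ref{c:C-to-1-update} and the semicontinuity \eqref{e:semi-cont-RCD} for $\harrcd{f}$, we can replace $3B_Q$ by balls centred at the cubes, as in the proof of Proposition \ref{p:main}. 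So the real task is to bound sums of $\harrcd{f}(B)^2\mu(B)$ over a dyadic family of balls. The first few generations are handled by \eqref{e:upper-bound-H} and bounded overlap (Lemma \ref{l:bounded-overlap}).

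For the main sum we choose the competitor $h$ for $\harrcd{f}(B_Q)$ as a harmonic replacement on a larger ball. Concretely, take $h_Q$ solving the Dirichlet problem for $f$ on $10B_Q$ (Theorem \ref{t:existence}). Since $h_Q$ is harmonic in $B_Q$ and $50B_Q\subseteq$ ... fails, we instead solve on $10B_Q$ and use Corollary \ref{c:second-order-caccio} on $B_Q$ with $5B_Q\subseteq 10B_Q$. This gives
\begin{align}
	\int_{B_Q} r_Q^2|\myHess(h_Q)|^2_\HS \, d\mu \lesssim \int_{2B_Q}|\nabla h_Q|^2 \, d\mu.
\end{align}
The trouble is that the right side is $\|\nabla f\|^2$-sized, not small, so summing over scales diverges. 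We therefore need the Hessian term to be controlled by something telescoping.

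The key idea is to subtract the energy at the next scale. Apply Corollary \ref{c:second-order-caccio} not to $h_Q$ but to $h_Q - h_{Q'}$ for a child-type ball. More efficiently, we use the harmonic replacements built in Section \ref{s:PI}. Let $f_k$ be the function harmonic on each cube of generation $k$ (on enlarged cubes, via the one-third trick). On $Q\in\cubes_k$, take $h = f_k$ and use the Bochner inequality in the form of Corollary \ref{c:second-order-caccio}. To make the Hessian summable, we compare $f_k$ with $f_{k-j}$ for fixed $j$. The Hessian of $f_{k-j}$ on $Q$ is bounded by $\rho^{2j}$ times its energy on the parent cube, since a harmonic function on a much larger ball has Hessian at scale $r$ controlled by the scale ratio. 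Lemma \ref{l:reverse-triangle-inequality} then gives
\begin{align}
	r_Q^2|\myHess(f_k)|^2_\HS \lesssim r_Q^2|\myHess(f_k-f_{k-j})|^2_\HS + r_Q^2|\myHess(f_{k-j})|^2_\HS .
\end{align}
We expect to use this pointwise bound on suitable subsets of $Q$ where both $f_k$ and $f_{k-j}$ are harmonic. The first term is handled by Caccioppoli (Corollary \ref{c:second-order-caccio} and Lemma \ref{l:caccio-replacement}) and bounded by $\|\nabla(f_k-f_{k-j})\|^2$. That quantity equals $\|\nabla f_k\|^2-\|\nabla f_{k-j}\|^2$ by the orthogonality in \eqref{e:D-orth}, and it telescopes. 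The second term is bounded using the gain $\rho^{2j}$ together with the energy of $f_{k-j}$, which sums geometrically.

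The main obstacle is the boundary issue. Corollary \ref{c:second-order-caccio} requires $5B\subseteq\Omega$, but the cube boundaries of generation $k-j$ cut through $B_Q$. We would treat this with the $M$ shifted systems of Theorem \ref{t:family-cubes}. For each $Q$, pick a system in which some ancestor-sized cube contains $10B_Q$ well inside, and then run the telescoping argument separately in each of the $M$ systems. The first term of the $\harrcd{f}$ coefficient, $\dashint ((f-h)/r)^2$, is controlled exactly by Lemma \ref{l:main}.
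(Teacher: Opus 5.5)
There is a genuine gap in your key step. Take $j$ fixed. Your interior estimate bounds the second term $r_Q^2|\myHess(f_{k-j})|_\HS^2$ on $Q$ by $\rho^{2j}$ times the energy of $f_{k-j}$ on a ball of radius $\sim\rho^{-j}r_Q$. Summing over $Q\in\cubes_k$, this gives a bound of size $\rho^{2j}\|\nabla f_{k-j}\|^2_{L^2}$. But $\|\nabla f_{k-j}\|_{L^2(Q_0)}^2$ does not decay in $k$: it is nondecreasing, because $f_{k+1}$ is an admissible competitor for $f_k$. So you pay a fixed multiple of roughly $\|\nabla f\|^2$ at every scale, and the sum over $k$ diverges. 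The factor $\rho^{2j}$ is a constant, not a geometric factor in $k$. If instead you let $j$ grow with $k$ (e.g. compare with the top scale), the second term decays, but then $\sum_k(\|\nabla f_k\|^2-\|\nabla f_{k-j}\|^2)$ no longer telescopes.

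Your boundary fix also does not work. The orthogonality identity needs $f_k-f_{k-j}\in N^{1,2}_0$ of the generation-$(k-j)$ cubes, i.e. nested cubes from a single system. Corollary \ref{c:second-order-caccio}, however, needs both functions harmonic on $5B$ around the region of integration. The $f_k$ of Section \ref{s:PI} is harmonic only inside generation-$k$ cubes, and in general $\int_Q|\myHess(f_k)|_\HS^2\,d\mu$ can be infinite, so it is not even a usable competitor for $\harrcd{f}(Q)$. No single function is harmonic on all the enlarged cubes $R_Q$, since these overlap. And choosing, for each $Q$, a system adapted to an ancestor-sized ball generally puts the generation-$k$ and generation-$(k-j)$ functions in different systems, where orthogonality fails.

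The missing idea is never to bound the coarse Hessian by energy, but to absorb it into the left-hand side at the coarser scale. The paper uses per-cube competitors. For $Q\in\cubes_k$, $l_Q$ is a near-optimal competitor for $\har{f}(R_Q)$, harmonic on $R_Q\supseteq 10B_Q$, and $g_k=\sum_{Q\in\cubes_k}|\myHess(l_Q)|_\HS\mathds{1}_Q$. Since $\cradial_k=\rho^{k-i}\cradial_i$, writing $g_k$ as a telescoping sum of the $g_{i+1}-g_i$ gives the discrete Hardy inequality $\sum_k\|\cradial_k g_k\|^2\lesssim\sum_k\|\cradial_k(g_{k+1}-g_k)\|^2$ (Lemmas \ref{l:conseq-second-order} and \ref{l:second-deriv-M}). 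The geometric gain comes from the weight $\cradial_k$, not from an interior estimate.

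Each difference then involves only a child $Q$ and its parent $Q'$. The function $l_Q-l_{Q'}$ is harmonic on $10B_Q\subseteq R_Q\cap R_{Q'}$, so there is no boundary issue. Lemma \ref{l:reverse-triangle-inequality}, Corollary \ref{c:second-order-caccio}, Lemma \ref{l:caccio-replacement} and the triangle inequality through $f$ bound $\|\cradial_k(g_{k+1}-g_k)\|^2_{L^2(Q)}$ by $\har{f}(R_Q)^2\mu(R_Q)+\har{f}(R_{Q'})^2\mu(R_{Q'})$. This is summable by Theorem \ref{t:PI} (Lemma \ref{l:control-R_Q}). Energy orthogonality is used only inside Theorem \ref{t:PI}, within a single system, so no cross-system telescoping is ever needed.
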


Proving Theorem \ref{t:RCD3} is our goal for the remainder of the section. Fix $(X,d,\mu)$ satisfying the hypotheses of Theorem \ref{t:RCD3} and fix a Sobolev function $f \in N^{1,2}(X)$ for the remainder of the section. By a similar argument to that at the beginning of Section \ref{s:PI}, it suffices to show the following.
\begin{prop}\label{p:main-second-order}
	Suppose $\cubes$ is a system of Christ-David cubes on $X$ and $Q_0 \in \cubes$. Then,
	\begin{align}
		\sum_{Q \in \cubes(Q_0)} \harrcd{f}(3B_Q)^2\mu(Q) \lesssim_{\cRCD(X)} \| \nabla f \|_{L^2(6\API  B_{Q_0})}^2.
	\end{align}
\end{prop}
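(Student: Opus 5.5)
We reduce Proposition \ref{p:main-second-order} to Proposition \ref{p:main} plus a Hessian estimate. For each $Q \in \cubes(Q_0)$, pick a harmonic function $h$ on an enlarged ball (say $15 B_Q$ or, after the one-third trick, on a cube $R_Q \supseteq 10B_Q$) that nearly realises $\har{f}$ there. Such an $h$ is harmonic on $3B_Q$ with $5\cdot 3B_Q \subseteq 15B_Q$. Corollary \ref{c:second-order-caccio} then gives
\begin{align}
\dashint_{3B_Q} |r\,\myHess(h)|_\HS^2\, d\mu \lesssim \frac{1}{\mu(3B_Q)}\int_{6B_Q} |\nabla h|^2 \, d\mu .
\end{align}
The zeroth-order term $\dashint_{3B_Q} ((f-h)/r)^2$ is controlled by $\har{f}(15B_Q;h)^2$ via doubling, exactly as in Lemma \ref{l:bounded-and-semicont}. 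It therefore remains to control $\mu(Q)^{-1}\int_{6B_Q}|\nabla h|^2$, summed over $Q$ with weight $\mu(Q)$.

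This gradient term cannot simply be bounded by $\int|\nabla f|^2$, since summing over all scales would diverge. The key is to subtract a constant before applying Caccioppoli and to subtract affine-like information on $h$. Instead of using the raw Hessian bound, we exploit linearity and Lemma \ref{l:reverse-triangle-inequality}. Take $h^Q$ to be the Dirichlet solutions used in Section \ref{s:PI} (on cubes, combined with the one-third trick of Corollary \ref{c:C-to-1-update} so that $3B_Q$ sits deep inside $R_Q$). Let $Q'$ be the parent-like cube at the next coarser level. Then $h^{Q}-h^{Q'}$ is harmonic on a neighbourhood of $3B_Q$ (after enlarging), so Corollary \ref{c:second-order-caccio} followed by Lemma \ref{l:caccio-replacement} gives
\begin{align}
\int_{3B_Q} r_Q^2|\myHess(h^Q)-\myHess(h^{Q'})|^2_\HS \lesssim \int_{6B_Q} |\nabla(h^Q - h^{Q'})|^2 \lesssim r_Q^{-2}\int_{12B_Q}|h^Q-h^{Q'}|^2 .
\end{align}
The right-hand side is bounded by $r_Q^{-2}$ times the $L^2$ distances of $f$ to $h^Q$ and to $h^{Q'}$. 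These are exactly the terms summed in Lemma \ref{l:main} and Proposition \ref{p:main}, up to a constant change of scale.

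To turn this into a bound on $r_Q^2|\myHess(h^Q)|^2$ itself, we telescope along the chain of ancestors $Q=Q_k\subset Q_{k-1}\subset\dots\subset Q_0$. This gives $r_Q|\myHess h^Q|\le \sum_j r_Q|\myHess(h^{Q_j})-\myHess(h^{Q_{j-1}})| + r_Q|\myHess h^{Q_0}|$, with the factor $r_Q/r_{Q_j}=\rho^{k-j}$ geometrically decaying. Cauchy–Schwarz with these geometric weights and an exchange of summation (Schur's test) bound the total by $\sum_Q \har{f}(CB_Q; h^Q)^2\mu(Q)$, which Proposition \ref{p:main} controls by $\|\nabla f\|^2_{L^2(6\API B_{Q_0})}$. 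The top-level term $r_Q^2\int|\myHess h^{Q_0}|^2$ is handled by Corollary \ref{c:second-order-caccio} and the energy minimality $\|\nabla h^{Q_0}\|\le\|\nabla f\|$. Summing $\rho^{2(k)}$ over $k$ makes it harmless.

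The main obstacle is this step. Harmonicity of $h^{Q_j}$ near $3B_Q$ requires $3B_Q$ (with dilation $5$ and the $2B$ from Caccioppoli) to lie well inside every ancestor domain. Cubes near the boundary of an ancestor break this. We would fix it by working with the $M$ shifted systems of Theorem \ref{t:family-cubes} and Remark \ref{r:very-small-ball}, choosing Dirichlet domains as cubes $R\in\cubes^j$ containing $CB_Q$ with $CB_Q\subseteq \frac{c_0}{4}B_R$, and building ancestor chains inside a single system $\cubes^j$ where nesting is automatic. The dependence on $\cRCD(X)$ enters only through the doubling, Poincar\'e and Corollary \ref{c:second-order-caccio} constants.
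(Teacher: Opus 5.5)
This is essentially the paper's proof: approximants harmonic on one-third-trick cubes $R_Q \supseteq CB_Q$, the parent--child estimate \eqref{e:third} via Corollary \ref{c:second-order-caccio}, Lemma \ref{l:caccio-replacement} and the triangle inequality through $f$, and geometric decay of $\cradial_k$ to pass from differences to the Hessians themselves. Your pointwise telescoping with Schur's test stands in for the $K_3/K_4$ dichotomy of Lemmas \ref{l:conseq-second-order}--\ref{l:second-deriv-M}, whereas the paper first proves the cube version (Lemma \ref{l:main-second-order}) and only then passes to $3B_Q$.

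Two things need tightening.

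First, telescope along ancestors in $\cubes$ itself. All that matters is that each $R_{Q_j}$ contains a large dilate of $B_{Q_j}$; chains of Dirichlet problems inside a single $\cubes^j$ would bring the boundary problem back.

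Second, treat the first $k^*$ generations crudely via \eqref{e:upper-bound-H} and start the telescoping at level $k^*$ (the paper uses the constants $\langle f\rangle_{3B_Q}$ at level $k^*-1$). This does two things. Your fixed $h^Q$ are then summed by Lemma \ref{l:main} through \eqref{e:C-to-1-update-2}--\eqref{e:C-to-1-update-3}, not by Proposition \ref{p:main}, which concerns infima on $3B_Q$. And the top-level energy bound no longer uses $\nabla f$ on the Dirichlet domain of $Q_0$, which is larger than $6\API B_{Q_0}$.
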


As in the previous section, we will show the \textit{a priori} weaker estimate stated in Lemma \ref{l:main-second-order} below. Modulo replacing $\har{f}$ with $\harrcd{f},$ the fact that Lemma \ref{l:main-second-order} is sufficient for Proposition \ref{p:main-second-order} can be observed by an identical argument to the fact that Lemma \ref{l:main} is sufficient for Proposition \ref{p:main}, see the end of Section \ref{s:PI}. We omit the details. 

\begin{lem}\label{l:main-second-order}
	Suppose $\cubes$ is a system of Christ-David cubes on $X$ and $Q_0 \in \cubes$. Then, 
	\begin{align}
		\sum_{Q \in \cubes(Q_0)} \harrcd{f}(Q)^2\mu(Q) \lesssim_{\cRCD(X)} \| \nabla f \|_{L^2(3\API B_{Q_0})}^2. 
	\end{align}
\end{lem}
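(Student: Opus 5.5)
Since $\harrcd{f}(Q)^2 \le \harrcd{f}(Q;h)^2$ for any $h$ harmonic in $Q$, but the Hessian is only controlled by Corollary \ref{c:second-order-caccio} on balls whose fivefold dilate lies in the harmonicity domain, I will not use the cube-wise Dirichlet solutions $h^Q$ of Section \ref{s:PI} directly. Instead, for each $Q \in \cubes(Q_0)$ I take $\tilde h^Q$ to be the solution of the Dirichlet problem for $f$ on the larger ball $\Lambda B_Q$, with $\Lambda$ a fixed multiple (say $\Lambda = 10$), so that $Q \subseteq B_Q$ and $5B_Q \subseteq \Lambda B_Q$. Then
\begin{align}
	\harrcd{f}(Q)^2\mu(Q) \lesssim \int_{B_Q} \left(\frac{f-\tilde h^Q}{\ell(Q)}\right)^2 d\mu + \int_{B_Q} \ell(Q)^2|\myHess(\tilde h^Q)|_\HS^2 \, d\mu,
\end{align}
using doubling to compare $\mu(Q)$ with $\mu(B_Q)$. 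The first term is exactly of the type handled in Section \ref{s:PI}: it is bounded by $\har{f,\nabla}(\Lambda B_Q;\tilde h^Q)^2\mu(\Lambda B_Q)$-type quantities, and summing over $Q$ is controlled as in Proposition \ref{p:main} (via the cube version Lemma \ref{l:main} and the one-third trick, Corollary \ref{c:C-to-1-update}, to pass from balls $\Lambda B_Q$ to cubes $R_Q$ in finitely many auxiliary systems). This gives the bound $\|\nabla f\|^2_{L^2(3\API B_{Q_0})}$ up to enlarging the dilation, which is why the stated region is $3\API B_{Q_0}$ rather than $Q_0$.

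The Hessian term is the heart of the matter. Corollary \ref{c:second-order-caccio} gives only $\int_{B_Q}\ell(Q)^2|\myHess(\tilde h^Q)|^2 \lesssim \int_{2B_Q}|\nabla\tilde h^Q|^2$, which is not summable over scales. So I will instead subtract an affine-like competitor. The key observation is that, for harmonic $h$ on $\Lambda B_Q$ and any constant $\lambda$, Corollary \ref{c:second-order-caccio} applied to $h$ and Caccioppoli (Lemma \ref{l:caccio-replacement}) is too crude. Instead I exploit the parent: let $\hat Q$ be the parent of $Q$, and compare $\tilde h^Q$ with $\tilde h^{\hat Q}$, which is harmonic on a region containing $5B_Q$. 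By Lemma \ref{l:reverse-triangle-inequality},
\begin{align}
	|\myHess(\tilde h^Q)|_\HS \le |\myHess(\tilde h^{\hat Q})|_\HS + |\myHess(\tilde h^Q-\tilde h^{\hat Q})|_\HS,
\end{align}
and Corollary \ref{c:second-order-caccio} plus Caccioppoli applied to the harmonic difference $\tilde h^Q-\tilde h^{\hat Q}$ bounds the second piece by $\ell(Q)^{-2}\dashint |\tilde h^Q-\tilde h^{\hat Q}|^2$, which is controlled by the already-summable first-order quantities $\|(f-\tilde h^Q)/\ell(Q)\|^2+\|(f-\tilde h^{\hat Q})/\ell(\hat Q)\|^2$. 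Iterating along the ancestry chain does not directly close, so the alternative I would actually pursue is the energy-telescoping mechanism of Section \ref{s:PI} at second order: the Bochner inequality (Theorem \ref{t:bochner-type}) for the glued functions $f_k$ (harmonic on each cube of generation $k$), localised with the cutoffs of Proposition \ref{l:cut-off}, gives
\begin{align}
	\sum_{Q\in\cubes_k(Q_0)} \int_{B_Q}\ell(Q)^2|\myHess(\tilde h^Q)|_\HS^2 \lesssim \sum_{Q \in \cubes_k(Q_0)} \int_{2B_Q}\big||\nabla \tilde h^Q|^2-c_Q\big| + \text{lower order},
\end{align}
using $\int \tfrac12|\nabla u|^2\Delta\phi$ with $\Delta\phi$ of zero mean-like cancellation to subtract a constant $c_Q$; choosing $c_Q$ from the parent relates this to differences of Dirichlet energies across consecutive generations, which telescope as in the proof following Lemma \ref{l:M-conseq}.

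The main obstacle is precisely this second-order telescoping: turning $\int |\nabla h|^2 \Delta\phi$ into a quantity that telescopes in $k$ rather than one bounded by the full energy at every scale. If the telescoping fails, my fallback is the heat-flow route hinted at in the introduction: replace $h$ by the heat-smoothed $P_{t}f$ with $t\sim \ell(Q)^2$, harmonically correct it, and use $\int_0^\infty \|\sqrt t\,\myHess(P_tf)\|^2\,dt/t\lesssim\|\nabla f\|^2$ from Bochner (integrating $\frac{d}{dt}\|\nabla P_tf\|^2$), localised with Proposition \ref{l:cut-off}; the lower-order $K$-terms are absorbed using boundedness of $X$ when $K<0$, giving the $\cRCD(X)$ dependence.
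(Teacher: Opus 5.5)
There is a genuine gap. You correctly identify the sum of the Hessian terms over all scales as the heart of the matter, but it is never actually bounded.

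The Bochner ``second-order telescoping'' has no working mechanism. An $L^1$ oscillation such as $\int_{2B_Q}\bigl||\nabla u|^2-c_Q\bigr|\,d\mu$ is not a difference of Dirichlet energies of nested glued functions, so nothing telescopes. The heat-flow fallback would still require localising a global square-function bound for the Hessian of heat-smoothed $f$, and building harmonic corrections with controlled Hessian; neither is done.

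Ironically, the parent comparison you abandon is precisely the paper's argument. It closes once you add two ingredients.

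\emph{First}, start the ancestry chain at a top generation ($k^*-1$ in the paper). There the approximant is the constant $\langle f\rangle_{3B_Q}$, so its Hessian vanishes and its first-order error is paid for by the Poincar\'e inequality.

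\emph{Second}, exploit the weight $\ell(Q)$. Set $g_k=\sum_{Q\in\cubes_k(Q_0)}|\myHess(l_Q)|_\HS\mathds{1}_Q$. Then $g_{k^*-1}\equiv 0$, hence pointwise
\[
\cradial_k g_k\le\sum_{j=k^*-1}^{k-1}\rho^{k-j}\,\cradial_j\,|g_{j+1}-g_j| .
\]
Cauchy--Schwarz with this geometric kernel gives $\sum_k\|\cradial_k g_k\|^2\lesssim\sum_j\|\cradial_j(g_{j+1}-g_j)\|^2$. The paper reaches the same bound via the $K_3/K_4$ splitting in Lemmas \ref{l:conseq-second-order} and \ref{l:second-deriv-M}.

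On $Q\subseteq Q'$, the increment is at most $|\myHess(l_Q-l_{Q'})|_\HS$ by Lemma \ref{l:reverse-triangle-inequality}. Exactly the estimate you wrote down (Corollary \ref{c:second-order-caccio}, Lemma \ref{l:caccio-replacement}, triangle inequality through $f$) bounds its contribution by $\har{f}(R_Q)^2\mu(R_Q)+\har{f}(R_{Q'})^2\mu(R_{Q'})$. That is summable by Lemma \ref{l:control-R_Q}. So iterating along ancestors does converge, because $\cradial_k/\cradial_j=\rho^{k-j}$ decays geometrically.

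There is also a smaller problem in your first-order step. Proposition \ref{p:main} bounds the infima $\har{f}(3B_Q)$, not the error of a particular Dirichlet solution on the ball $\Lambda B_Q$. The energy telescoping behind Lemma \ref{l:main} needs nested domains, and the balls $\Lambda B_Q$ are not nested. Moreover, a Dirichlet solution need not be a near-optimal $L^2$ approximant: a thin boundary layer of $f$ can change it by order one.

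Instead, choose $l_Q$ harmonic on the cube $R_Q\supseteq 10B_Q$ of Corollary \ref{c:C-to-1-update}, with $\har{f}(R_Q;l_Q)\lesssim\har{f}(R_Q)$, as the paper does. This controls the first-order sum by Lemma \ref{l:control-R_Q}. It also guarantees that $l_Q-l_{Q'}$ is harmonic on $10B_Q$, which the parent comparison above requires.
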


\begin{rem}
	Recalling from Theorem \ref{t:RCD-PI} that the doubling and Poincar\'e constant $(\Cdoub,\CPI)$ depend only on $\cRCD(X)$, we can allow our estimates below to depend on these quantities. Since $\API = 2$, we will not keep track of how the implicit constants depend on $\API$.
\end{rem}

We begin the proof of Lemma \ref{l:main-second-order} in earnest. Fix a system of Christ-David cubes $\cubes$ on $X$ and $Q_0 \in \cubes$. Without loss of generality, assume that $Q_0 \in \cubes_0$ so that $\ell(Q_0) = 5.$ Let 
\begin{align}
	N, k^* \in \N, \quad \{\cubes^j\}_{j=1}^N, \quad  \{R_Q\}_{Q \in \cubes} \quad \mbox{and} \quad \{Q_0^{j,i}\}_{i \in I_j}  
\end{align}
satisfy the conclusions of Corollary \ref{c:C-to-1-update}. The following will be used several times.

\begin{lem}\label{l:control-R_Q}
	We have 
	\begin{align}\label{e:control-R_Q}
		\sum_{k=k^*}^\infty \sum_{Q \in \cubes_k(Q_0)} \har{f}(R_Q)^2 \mu(R_Q) \lesssim_{\Cdoub, \CPI} \| \nabla f \|_{L^2(3B_{Q_0})}^2 .
	\end{align}
\end{lem}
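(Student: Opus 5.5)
This is essentially the second half of the proof of Proposition \ref{p:main}, with the coefficient normalised by $\mu(R_Q)$ instead of $\mu(Q)$. The plan is to regroup the sum according to the cube $R_Q$, bound the regrouped sum by Lemma \ref{l:sum-cube} applied inside each top cube $Q_0^{j,i}$, and finally use disjointness of these top cubes.

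\emph{Regrouping.} By \eqref{e:C-to-1-update-3}, every $Q \in \cubes_k(Q_0)$ with $k \ge k^*$ has $R_Q \subseteq Q_0^{j,i}$ for some $j \in \{1,\dots,N\}$ and $i \in I_j$. Hence $R_Q \in \cubes^j(Q_0^{j,i})$. Reindexing by $R = R_Q$, I will bound the left-hand side of \eqref{e:control-R_Q} by
\begin{align}
\sum_{j=1}^N \sum_{i \in I_j} \sum_{R \in \cubes^j(Q_0^{j,i})} \#\{Q \in \cubes \colon R_Q = R\}\, \har{f}(R)^2 \mu(R).
\end{align}
By \eqref{e:C-to-1-update-2}, the cardinality factor is $\lesssim_{\Cdoub} 1$.

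\emph{Applying Lemma \ref{l:sum-cube}.} Since $\har{f} = \har{\nabla,f}$ and $\nabla$ is a Cheeger derivative on the doubling space $X$ supporting a $(1,2)$-Poincar\'e inequality (Theorem \ref{t:RCD-PI}), Lemma \ref{l:sum-cube} applies in each system $\cubes^j$ with top cube $Q_0^{j,i}$. It gives
\begin{align}
\sum_{R \in \cubes^j(Q_0^{j,i})} \har{f}(R)^2\mu(R) \lesssim_{\Cdoub,\CPI} \|\nabla f\|_{L^2(Q_0^{j,i})}^2.
\end{align}

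\emph{Summing.} For fixed $j$, the cubes $\{Q_0^{j,i}\}_{i\in I_j}$ are disjoint and contained in $3B_{Q_0}$. So summing over $i$ gives at most $\|\nabla f\|_{L^2(3B_{Q_0})}^2$. Summing over the $N = N(\Cdoub)$ values of $j$ costs only a factor of $N$, which yields \eqref{e:control-R_Q}.

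The only point needing care is the regrouping step. We must make sure each $R$ is counted with multiplicity bounded via \eqref{e:C-to-1-update-2}. We must also handle the possibility that a given $Q$ admits several admissible pairs $(j,i)$. To avoid double counting, I will fix one choice of $(j,i)$ for each $Q$; this only reduces the sum, so the bound is unaffected. Everything else is direct bookkeeping.
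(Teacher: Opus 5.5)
Your proposal is correct and follows the paper's proof. In both, you regroup the sum by $R_Q$ using \eqref{e:C-to-1-update-2} and \eqref{e:C-to-1-update-3}, apply Lemma \ref{l:sum-cube} in $\cubes^j$ with top cube $Q_0^{j,i}$, and sum using that, for fixed $j$, the $Q_0^{j,i}$ are disjoint subsets of $3B_{Q_0}$ and $N = N(\Cdoub)$. Fixing one pair $(j,i)$ per $Q$ just makes explicit the bookkeeping the paper leaves implicit.
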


\begin{proof}
	By applying \eqref{e:C-to-1-update-2} and \eqref{e:C-to-1-update-3}, we have that 
	\begin{align}\label{e:above'}
		\sum_{k=k^*}^\infty \sum_{Q \in \cubes_k(Q_0)} \har{f}(R_Q)^2 \mu(R_Q) \lesssim_{\Cdoub} &\sum_{j=1}^N \sum_{i \in I_j} \sum_{Q \in \cubes^j(Q_0^{j,i})} \har{f}(Q)^2\mu(Q). 
	\end{align}
	Then, by applying Lemma \ref{l:sum-cube}, using that $N = N(\Cdoub)$ and that $\{Q_0^{j,i}\}$ are disjoint cubes contained in $3B_{Q_0}$, the right-hand side of \eqref{e:above'} is bounded by a constant multiple of $\|\nabla f \|_{L^2(3B_{Q_0})}^2$, with the constant depending only on $\cRCD(X)$, $\Cdoub$ and $\CPI$. 
\end{proof}

For each $Q \in \cubes_{k^*-1}$, let $l_{Q} \colon X \to \R$ be the constant function 
\begin{align}
	l_Q(x) = \langle f \rangle_{3B_Q}. 
\end{align}
For each $Q \in \cubes_k$ with $k \geq k^*,$ let $l_{Q} \colon X \to \R$ be a function which is harmonic in $R_Q$ and satisfies 
\begin{align}\label{e:optimal-l}
	\har{f}(R_Q ; l_{Q}) \lesssim \har{f}(R_Q).
\end{align}
For each integer $k \geq k^*-1$, define $g_k \colon Q_0 \to \R$ by 
\begin{align}\label{e:def-g_k}
	g_k = \sum_{Q \in \cubes_k(Q_0)} | \myHess(l_Q) |_\HS \mathds{1}_Q. 
\end{align}
Note that $g_{k^*-1} \equiv 0$. 
\begin{lem}\label{l:upper-osc}
	We have 
	\begin{align}\label{e:upper-osc}
		\sum_{Q \in \cubes(Q_0)} \harrcd{f}(Q)^2\mu(Q) \lesssim_{\Cdoub,\CPI} \| \nabla f \|_{L^2(\lambda B_{Q_0})}^2 + \sum_{k=k^*}^\infty \| \cradial_k g_k \|_{L^2(Q_0)}^2,
	\end{align}
	where $\lambda = \max\{3,\API\}$. 
\end{lem}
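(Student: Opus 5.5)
The plan is to split the sum on the left of \eqref{e:upper-osc} into the coarse part $\{Q \in \cubes_k(Q_0) : 0 \le k < k^*\}$ and the fine part $\{Q \in \cubes_k(Q_0) : k \geq k^*\}$, and to treat the two differently.

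For the coarse scales I will use the constant function $\langle f \rangle_{Q}$ as a competitor in the definition of $\harrcd{f}(Q)$. It is harmonic and has zero Hessian, so by the upper bound \eqref{e:upper-bound-H} for $\harrcd{f}$ in Lemma \ref{l:bounded-and-semicont}, together with $Q \subseteq B_Q$, doubling and the Poincar\'e inequality, we get
\[
\harrcd{f}(Q)^2\mu(Q) \lesssim \|\nabla f\|_{L^2(\API B_Q)}^2 .
\]
For each fixed $k$, the balls $\{\API B_Q : Q\in\cubes_k(Q_0)\}$ have bounded overlap by Lemma \ref{l:bounded-overlap}, and they lie in $\lambda B_{Q_0}$ because $\ell(Q)\le\ell(Q_0)$ and $B_Q$ meets $Q_0$. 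Since $k^*\lesssim 1$, summing over $k<k^*$ gives at most $C\|\nabla f\|^2_{L^2(\lambda B_{Q_0})}$.

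For the fine scales, take $Q\in\cubes_k(Q_0)$ with $k\ge k^*$. Since $Q\subseteq 10B_Q\subseteq R_Q$ and $l_Q$ is harmonic in $R_Q$, the function $l_Q$ is harmonic in $Q$ and is an admissible competitor. Hence
\[
\harrcd{f}(Q)^2\mu(Q)\le \int_Q \left(\frac{f-l_Q}{r_Q}\right)^2 d\mu + \int_Q r_Q^2 |\myHess(l_Q)|_\HS^2\, d\mu ,
\]
where $r_Q$ is the radius attached to $Q$, comparable to $\cradial_k$.
\begin{itemize}
\item For the first term, $\mu(R_Q)\sim\mu(Q)$ and $\ell(R_Q)\sim\ell(Q)$ by \eqref{e:C-to-1-update-1} and doubling. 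Together with \eqref{e:optimal-l}, this bounds the term by $\lesssim \har{f}(R_Q;l_Q)^2\mu(R_Q)\lesssim \har{f}(R_Q)^2\mu(R_Q)$. Summing over $k\ge k^*$ and $Q\in\cubes_k(Q_0)$, Lemma \ref{l:control-R_Q} bounds the total by $\|\nabla f\|^2_{L^2(3B_{Q_0})}$.
\item For the second term, $|\myHess(l_Q)|_\HS=g_k$ on $Q$ by \eqref{e:def-g_k}, and the cubes of $\cubes_k(Q_0)$ are disjoint. So the sum over $Q\in\cubes_k(Q_0)$ is exactly $\|\cradial_k g_k\|^2_{L^2(Q_0)}$ up to constants.
\end{itemize}

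The only delicate point is the bookkeeping of what "radius of $Q$" means, because $\harrcd{f}(Q)$ is defined for balls. I will interpret $\harrcd{f}(Q)$ via the convention used in Section \ref{s:PI}: $Q$ is treated through a ball of radius comparable to $\ell(Q)$ containing $Q$ and contained in $B_Q$, with averages over $Q$. In either reading, doubling makes all the comparisons above hold with constants depending only on $\Cdoub$, which gives \eqref{e:upper-osc}.
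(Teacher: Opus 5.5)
Your proposal is correct and follows the paper's proof essentially step for step. The coarse scales are handled by \eqref{e:upper-bound-H}, bounded overlap and $k^*\lesssim 1$. At the fine scales $l_Q$ is the competitor: its $L^2$ term is controlled through $R_Q$, \eqref{e:optimal-l} and Lemma \ref{l:control-R_Q}, and its Hessian term is exactly $\|\cradial_k g_k\|^2_{L^2(Q_0)}$.
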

\begin{proof}
	By Lemma \ref{l:bounded-and-semicont}, we have 
	\begin{align}
		\harrcd{f}(Q)^2\mu(Q) \lesssim_{\Cdoub,\CPI} \| \nabla f \|_{L^2(\API B_Q)}^2
	\end{align}
	for each $Q \in \cubes.$ Since the collection of balls $\{\API B_Q\}_{Q \in \cubes_k}$ are contained in $\API B_{Q_0}$ and have bounded overlap for each $k \geq 0$ (depending on the doubling constant $\Cdoub$ and $\API$), and $k^* \lesssim 1$, we have 
	\begin{align}
		\sum_{k=0}^{k^*} \sum_{Q \in \cubes_k(Q_0)} \harrcd{f}(Q)^2\mu(Q) \lesssim_{\Cdoub, \CPI,\API} \| \nabla f \|_{L^2(\API B_{Q_0})}^2.  
	\end{align}
	Unravelling the definitions, the sum over cubes in $\cubes_k(Q_0)$ with $k \geq k^*$ is bounded from above by 
	\begin{align}
		&\sum_{k=k^*}^\infty \sum_{Q \in \cubes_k(Q_0)} \harrcd{f}(Q;l_Q)^2\mu(Q) \\
		&\hspace{0.5em} = \sum_{k=k^*}^\infty  \sum_{Q \in \cubes_k(Q_0)} \har{f}(Q ; l_Q)^2\mu(Q) + \sum_{k=k^*}^\infty \sum_{Q \in \cubes_k(Q_0)} \| \diam(Q) |\myHess(l_Q)|_\HS \|_{L^2(Q)}^2 \\
		&\hspace{0.5em} \lesssim \sum_{k=k^*}^\infty  \sum_{Q \in \cubes_k(Q_0)} \har{f}(R_Q ; l_Q)^2\mu(R^Q) + \sum_{k=k^*}^\infty \| \cradial_k g_k \|_{L^2(Q_0)}^2 \\
		&\hspace{0.5em} \lesssim_{\Cdoub,\CPI}  \| \nabla f \|_{L^2(3B_{Q_0})}^2 +  \sum_{k=k^*}^\infty\| \cradial_k g_k \|_{L^2(Q_0)}^2,
		\end{align}
		where in the first inequality we have used the definition of $g_k$ from \eqref{e:def-g_k} and the fact that $Q \subseteq R_Q$, and in the second inequality we used \eqref{e:optimal-l} along with Lemma \ref{l:control-R_Q}.
\end{proof}

By Lemma \ref{l:upper-osc}, the proof of Lemma \ref{l:main-second-order} will be complete if we can bound the final term in \eqref{e:upper-osc} by $\| \nabla f \|_{L^2(3\API B_{Q_0})}^2$. This will be our goal for the remainder of the section. To achieve this, we first partition $\Z_{\geq k^*}$ (the set of integers $k \geq k^*$) into 
\begin{align}
	K_3 &\coloneqq \left\{ k \in \Z_{\geq k^*} \colon \| g_k - g_{k-1}\|_{L^2(Q_0)}  \leq \tfrac{1}{2} \| g_k \|_{L^2(Q_0)} \right\}\\
	K_4 &\coloneqq \Z_{\geq k^*} \setminus K_3.
\end{align}

\begin{lem}\label{l:conseq-second-order}
	Suppose that $\{k+1,\dots,M\} \subseteq K_3$. Then, we have 
	\begin{align}\label{e:second-derivative}
		\sum_{i =k}^M \| \cradial_{i} g_{i} \|_{L^2(Q_0)}^2 \lesssim \| \cradial_{k} g_k  \|_{L^2(Q_0)}^2. 
	\end{align}
\end{lem}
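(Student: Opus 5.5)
Since each index $i\in\{k+1,\dots,M\}$ lies in $K_3$, the reverse triangle inequality gives
\begin{align}
	\| g_{i-1}\|_{L^2(Q_0)} \geq \| g_i \|_{L^2(Q_0)} - \| g_i - g_{i-1}\|_{L^2(Q_0)} \geq \tfrac{1}{2}\| g_i\|_{L^2(Q_0)}.
\end{align}
Iterating from $i = k+1$ up to $i$, we get $\| g_i\|_{L^2(Q_0)} \leq 2^{i-k}\| g_k\|_{L^2(Q_0)}$ for $k \leq i \leq M$. Multiplying by $\cradial_i = 5\rho^i$ then gives
\begin{align}
	\| \cradial_i g_i\|_{L^2(Q_0)} \leq (2\rho)^{i-k} \| \cradial_k g_k \|_{L^2(Q_0)}.
\end{align}

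To close the argument I square and sum over $i$ from $k$ to $M$. This produces a geometric series $\sum_{j \geq 0} (2\rho)^{2j}$, which is bounded by a constant depending only on $\rho$ provided $2\rho < 1$. Since the implicit constants are allowed to depend on $\rho$ (a fixed constant from Theorem~\ref{cubes}), this yields \eqref{e:second-derivative}.

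The only genuine obstacle is the requirement $2\rho<1$. If the fixed $\rho$ from Theorem~\ref{cubes} is not below $1/2$, I would first try to adjust the threshold in the definition of $K_3$ from $\tfrac12$ to some $\delta$ with $1-\delta > \rho$. The same reverse triangle inequality argument then gives the decay factor $\rho/(1-\delta)<1$. The later arguments for $K_4$ only need a fixed proportion $\delta$, so this change should be harmless there.

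Alternatively, I would observe that Christ--David cubes can be constructed with any sufficiently small $\rho$, for instance by passing to every $m$-th generation. Either route gives a geometric ratio strictly less than $1$, and the rest is routine summation. No property of harmonic functions or Hessians is needed for this step; the lemma is purely a bookkeeping consequence of the defining inequality of $K_3$ and the geometric decay of the scales $\cradial_i$.
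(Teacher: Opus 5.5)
Your argument is correct, and it takes a more direct route than the paper's.

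\textbf{The paper's route.} It writes $g_i = g_k + \sum_{j=k}^{i-1}(g_{j+1}-g_j)$ and applies the triangle inequality to the unsquared sum $\sum_{i=k+1}^M \|\varrho_i g_i\|$. It then swaps the order of summation and sums the geometric series in $\varrho_i/\varrho_j$. Next it uses the $K_3$ condition to bound each $\|\varrho_j(g_{j+1}-g_j)\|$ by a multiple of $\|\varrho_{j+1}g_{j+1}\|$. Finally it absorbs that sum into the left-hand side and passes to squares via $\sum a_i^2 \le (\sum a_i)^2$. This is the same template as Lemma \ref{l:conseq}, which is what that version buys: uniformity of presentation.

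\textbf{Your route.} You extract from $K_3$ the one-step growth bound $\|g_i\| \le 2\|g_{i-1}\|$ and iterate it. This gives the termwise decay $\|\varrho_i g_i\| \le (2\rho)^{i-k}\|\varrho_k g_k\|$, which is stronger than the summed statement. It also avoids the absorption step and gives the explicit constant $(1-4\rho^2)^{-1}$.

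\textbf{The restriction on $\rho$.} Your caveat is not an extra cost of your approach; the paper's proof needs the same restriction. It bounds $\sum_{i>j}\rho^{i-j}$ by $1$. Moreover, converting $\varrho_j\|g_{j+1}-g_j\|$ into a multiple of $\|\varrho_{j+1}g_{j+1}\|$ produces a factor $\varrho_j/\varrho_{j+1}=\rho^{-1}$. Tracking it, the coefficient to be absorbed is $\tfrac{1}{2(1-\rho)}$, which is below $1$ exactly when $\rho<1/2$. The restriction is in fact sharp for this purely bookkeeping lemma. The choice $g_i = 2g_{i-1}$ satisfies the $K_3$ condition with equality, and then the left-hand side is unbounded in $M$ whenever $\rho \ge 1/2$. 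Christ--David cubes are standardly built with $\rho$ small, so your fallback modifications are unnecessary, though either would also work.
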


\begin{proof}
	For brevity, let us denote $\|\cdot\|_{L^2(Q_0)}$ by $\|\cdot\|.$ Squaring both sides and using the inequality $\sum_{i=k}^M a_i^2 \leq (\sum_{i=k}^M a_i)^2$ for $a_i \geq 0,$ it suffices to show 
	\begin{align}
		\sum_{i =k+1}^M \| \cradial_{i} g_{i}  \| \lesssim \| \cradial_{k} g_k \|. 
	\end{align}
	Note that we compute the sum from $i=k+1$ (the $i=k$ term in \eqref{e:second-derivative} is clearly bounded from above by the right-hand side). Now, using the triangle inequality, swapping the order of summation, summing over a geometric series and using the definition of $K_3,$ we estimate the left-hand side of \eqref{e:second-derivative} as follows, 
	\begin{align}
		\sum_{i =k+1}^M \| \cradial_{i} g_{i} \|  &\leq \sum_{i =k+1}^M \left( \frac{\cradial_{i}}{\cradial_{k}}\right)\| \cradial_{k} g_{k} \| + \sum_{i =k+1}^M\sum_{j=k}^{i-1} \left( \frac{\cradial_{i}}{\cradial_{j}}\right) \| \cradial_{j} (g_{{j+1}} - g_{j})\| \\
		&= \sum_{i =k+1}^M \left( \frac{\cradial_{i}}{\cradial_{k}}\right)\| \cradial_{k} g_{k} \| + \sum_{j=k}^{M-1} \sum_{i = j+1}^M \left( \frac{\cradial_{i}}{\cradial_{j}}\right) \| \cradial_{j} (g_{{j+1}} - g_{j})\| \\
		&\leq \| \cradial_{k} g_{k} \|  + \sum_{j=k}^{M-1} \| \cradial_{j} (g_{{j+1}} - g_{j})\| \leq \| \cradial_{k} g_{k}  \| +\frac{1}{2} \sum_{j=k}^{M-1} \| \cradial_{{j+1}} g_{{j+1}} \|.
	\end{align}
	Rearranging the above inequality gives the desired estimate. 
\end{proof}

\begin{lem}\label{l:second-deriv-M}
	For each integer $M \geq 1$, we have 
	\begin{align}\label{e:second-derivative-M}
		\sum_{k=k^*}^M \| \cradial_k g_k \|^2_{L^2(Q_0)} \lesssim \sum_{k = k^*-1}^{M-1} \| \cradial_k (g_{k+1} - g_k) \|^2_{L^2(Q_0)} .
	\end{align}
\end{lem}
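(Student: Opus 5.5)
This is the analogue of Lemma \ref{l:M-conseq} for the partition $K_3 \cup K_4$, and I would prove it the same way: use Lemma \ref{l:conseq-second-order} on maximal runs of $K_3$, and control the starting point of each run via the definition of $K_4$. Write $\|\cdot\|$ for $\|\cdot\|_{L^2(Q_0)}$. Enumerate the elements of $K_4 \cap [k^*, M]$ as $k_1 < k_2 < \dots < k_J$.

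\emph{Splitting into blocks.} I would split $\{k^*,\dots,M\}$ into the blocks $\{k_j,\dots,k_{j+1}-1\}$ for $j=1,\dots,J$ (with $k_{J+1} \coloneqq M+1$), together with an initial block $\{k^*,\dots,k_1-1\}$, which is nonempty only if $k^* \in K_3$. Within a block starting at $k_j$, every index $i \in \{k_j+1,\dots,k_{j+1}-1\}$ lies in $K_3$. Lemma \ref{l:conseq-second-order}, applied with $k = k_j$ and $M = k_{j+1}-1$, then gives
\begin{align}
\sum_{i=k_j}^{k_{j+1}-1} \|\cradial_i g_i\|^2 \lesssim \|\cradial_{k_j} g_{k_j}\|^2 .
\end{align}

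\emph{The initial block.} Here every index from $k^*$ to $k_1-1$ lies in $K_3$. The trick is to start the run at $k^*-1$: apply Lemma \ref{l:conseq-second-order} with $k = k^*-1$ and use $g_{k^*-1} \equiv 0$. Since the right-hand side of that lemma vanishes, the whole initial block is $0$. Concretely, for $i \in K_3$ with $i = k^*$, the condition $\|g_{k^*} - 0\| \le \tfrac12 \|g_{k^*}\|$ forces $g_{k^*} = 0$. Iterating, every $g_i$ in an initial run of $K_3$ vanishes. So this block contributes nothing.

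\emph{Controlling the block heads.} For $k_j \in K_4$ we have $\|g_{k_j}\| < 2\|g_{k_j} - g_{k_j-1}\|$. Since $\cradial_{k_j} = \rho\,\cradial_{k_j-1} \le \cradial_{k_j-1}$, this gives
\begin{align}
\|\cradial_{k_j} g_{k_j}\| \le 2 \|\cradial_{k_j-1}(g_{k_j} - g_{k_j-1})\| ,
\end{align}
and $k_j - 1 \in [k^*-1, M-1]$. Summing over the blocks, each $k_j$ appears once, so
\begin{align}
\sum_{k=k^*}^M \|\cradial_k g_k\|^2 \lesssim \sum_{j=1}^J \|\cradial_{k_j-1}(g_{k_j}-g_{k_j-1})\|^2 \le \sum_{k=k^*-1}^{M-1} \|\cradial_k(g_{k+1}-g_k)\|^2 ,
\end{align}
which is \eqref{e:second-derivative-M}.

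\emph{Main obstacle.} The only delicate point is the index bookkeeping. Lemma \ref{l:conseq-second-order} is stated with the run $\{k+1,\dots,M\}$ in $K_3$ and the head $k$ arbitrary, so the blocks must begin at elements of $K_4$ (or at $k^*-1$), not end at them. One also has to check that the constant in Lemma \ref{l:conseq-second-order} is uniform in block length; it is, because it comes from a geometric series.
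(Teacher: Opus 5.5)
Your proof is correct and follows the paper's argument. The paper likewise splits $[k^*,M]$ into runs headed by consecutive elements of $K_4$, applies Lemma \ref{l:conseq-second-order} to each run, and bounds each head $\|\cradial_{k_j}g_{k_j}\|$ using the definition of $K_4$ together with $\cradial_{k_j}\le\cradial_{k_j-1}$. Your separate treatment of a possible initial $K_3$-run, where all those $g_i$ vanish because $g_{k^*-1}\equiv 0$, is actually more careful than the paper: the paper asserts $k^*\in K_4$ outright, but in fact $k^*\in K_3$ whenever $g_{k^*}=0$ on $Q_0$.
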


\begin{proof}
	Again, for brevity, let us denote $\|\cdot\|_{L^2(Q_0)}$ by $\|\cdot\|.$ Let $\{k_i\}_{i =1}^N$ be an enumeration of the elements in $K_4$ between $k^*$ and $M$ (inclusive) ordered such that $k_i < k_{i+1}$ for all $i$. Since $g_{k^*-1} \equiv 0$, it follows that $k^* \in K_4$ (otherwise, $\|g_{k^*}\| \leq \tfrac{1}{2}\|g_{k^*}\|$ by the definition of $K_3$). Thus, the left-hand side of \eqref{e:second-derivative-M} can be written as 
	\begin{align}
		\sum_{i=1}^N \sum_{k=k_i}^{k_{i+1}-1} \| \cradial_k g_k \|^2 + \sum_{k=k_N}^M \| \cradial_k g_k \|^2. 
	\end{align}
	By Lemma \ref{l:conseq-second-order} and the definition of $K_4$, we now have 
	\begin{align}
		\begin{split}\label{e:first}
		\sum_{k=k^*}^M \| \cradial_k g_k  \|^2 \lesssim  \sum_{i=1}^N \| \cradial_{k_i} g_{k_i} \|^2 \lesssim \sum_{i=1}^N \| \cradial_{k_i-1} (g_{k_i} - g_{k_i-1}) \|^2,
		\end{split}
	\end{align}
	which implies \eqref{e:second-derivative-M}. 
\end{proof}

\begin{lem}\label{l:hess-bound}
	For each integer $M \geq 1$ we have 
	\begin{align}
		\sum_{k=k^*}^M \| \cradial_k g_k \|^2_{L^2(Q_0)} \lesssim_{\Cdoub, \CPI} \| \nabla f \|^2_{L^2(3\API B_{Q_0})}. 
	\end{align}
\end{lem}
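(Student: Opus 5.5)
By Lemma \ref{l:second-deriv-M}, it suffices to bound
\[
\sum_{k=k^*-1}^{M-1}\|\cradial_k(g_{k+1}-g_k)\|_{L^2(Q_0)}^2
\]
by $\|\nabla f\|^2_{L^2(3\API B_{Q_0})}$. The idea is that two consecutive approximations $l_Q$ and $l_P$ ($P$ a child of $Q$) are both harmonic on a common region. Their Hessian difference is therefore controlled, via the second-order Caccioppoli estimate, by their $L^2$ difference. That difference in turn is controlled by $\har{f}(R_Q)$ and $\har{f}(R_P)$, which are summable by Lemma \ref{l:control-R_Q}.

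\textbf{Step 1 (pointwise comparison).} Fix $k\ge k^*-1$, $Q\in\cubes_k(Q_0)$ and a child $P\in\cubes_{k+1}(Q)$. On $P$ we have $g_{k+1}-g_k=|\myHess(l_P)|_\HS-|\myHess(l_Q)|_\HS$. Both $l_P,l_Q\in\Test_{\rm loc}$ on a neighbourhood of $P$: they are harmonic on $R_P$, respectively $R_Q$, and $R_P\subseteq R_Q$ up to arranging the constants in Corollary \ref{c:C-to-1-update}; for $k=k^*-1$, $l_Q$ is constant. Lemma \ref{l:reverse-triangle-inequality} then gives
\[
|g_{k+1}-g_k|\le|\myHess(l_P-l_Q)|_\HS \quad\text{a.e. on } P.
\]

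\textbf{Step 2 (Caccioppoli twice).} Set $u=l_P-l_Q$, which is harmonic in $R_P\supseteq 10B_P$. Corollary \ref{c:second-order-caccio} on the ball $2B_P$ (so that $5\cdot 2B_P\subseteq 10B_P$) gives
\[
\int_P |\cradial_k\myHess(u)|^2_\HS\lesssim\int_{4B_P}|\nabla u|^2 .
\]
Lemma \ref{l:caccio-replacement} with $\lambda=0$ then bounds this by
\[
\cradial_k^{-2}\int_{8B_P}|u|^2 .
\]
Splitting $u=(l_P-f)+(f-l_Q)$ and using $8B_P\subseteq R_P\subseteq R_Q$, we get
\[
\int_P|\cradial_k(g_{k+1}-g_k)|^2\lesssim \har{f}(R_P;l_P)^2\mu(R_P)+\frac{\ell(R_Q)^2}{\cradial_k^2}\,\har{f}(R_Q;l_Q)^2\mu(R_Q).
\]
Since $\ell(R_Q)\lesssim\ell(Q)$ and $\ell(R_P)\gtrsim\ell(P)$, the scale factors are bounded.

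\textbf{Step 3 (summation).} Sum over children $P$ (each $Q$ has boundedly many children) and over $k$. The terms with $k\ge k^*$ are controlled via \eqref{e:optimal-l} and Lemma \ref{l:control-R_Q} by $\|\nabla f\|^2_{L^2(3B_{Q_0})}$. The term $k=k^*-1$ involves $l_Q=\langle f\rangle_{3B_Q}$. There I would use the Poincar\'e inequality, together with bounded overlap of the $3\API B_Q$, $Q\in\cubes_{k^*-1}$, to bound
\[
\int_{8B_P}|f-\langle f\rangle_{3B_Q}|^2\lesssim \cradial_k^2\|\nabla f\|^2_{L^2(3\API B_{Q})},
\]
after enlarging $3B_Q$ appropriately; this is fine since $k^*\lesssim 1$ and all balls are comparable. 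The constants depend only on $\cRCD(X)$.

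\textbf{Main obstacle.} The containment bookkeeping is the delicate part. I must ensure that $l_Q$ is harmonic on a ball around each child $P$ large enough to apply Corollary \ref{c:second-order-caccio} (which needs $5B\subseteq\Omega$) followed by Lemma \ref{l:caccio-replacement}. This means checking that $R_P\subseteq R_Q$, or at least that $10B_P\subseteq 10B_Q\subseteq R_Q$. This holds since $B_P\subseteq B_Q$ up to a constant. If it fails, I would use the other inclusion $10B_P\subseteq R_Q$ directly, with balls of radius comparable to $\ell(P)$, and adjust the dilation factors.
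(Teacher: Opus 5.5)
Your proposal is correct and follows essentially the paper's route. You reduce via Lemma \ref{l:second-deriv-M}, compare the consecutive approximants on each child using Lemma \ref{l:reverse-triangle-inequality}, Corollary \ref{c:second-order-caccio} and Lemma \ref{l:caccio-replacement} on balls inside $10B_P\subseteq 10B_Q\subseteq R_Q$ (so $R_P\subseteq R_Q$ is never needed), sum with Lemma \ref{l:control-R_Q}, and treat level $k^*-1$ by Poincar\'e. One minor slip: the factor $\ell(R_P)/\cradial_k$ is bounded because $\ell(R_P)\lesssim\ell(P)$, not $\ell(R_P)\gtrsim\ell(P)$.
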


\begin{proof}
	First, by Lemma \ref{l:second-deriv-M}, 
	\begin{align}
		\begin{split}\label{e:above}
			\sum_{k=k^*}^M \| \cradial_k g_k \|^2_{L^2(Q_0)} &\lesssim \sum_{k=k^*-1}^{M-1} \| \cradial_k (g_{k+1} - g_k) \|^2_{L^2(Q_0)} \\
			&= \sum_{k =k^*-1}^{M-1} \sum_{Q \in \calD_{k+1}(Q_0)} \| \cradial_k (g_{k+1} - g_k) \|^2_{L^2(Q)}.
		\end{split}
	\end{align}
	
	Fix some $Q \in \calD_{k+1}$ and let $Q' \in \calD_{k}$ such that $Q \subseteq Q'.$ Suppose to begin with that $k \geq k^*$. In this case, recall from \eqref{e:optimal-l} and \eqref{e:def-g_k} that $g_{k+1}\mathds{1}_Q = |\myHess(l_Q)|_\HS \mathds{1}_Q$, where $l_Q \colon X \to \R$ is harmonic on $R_Q \supseteq 10B_Q.$ Similarly, $g_{k} \mathds{1}_{Q'} = |\myHess (l_{Q'})|_\HS\mathds{1}_{Q'}$ for some function $l_{Q'} \colon X \to \R$ which is harmonic on $R_{Q'} \supseteq 10B_{Q'} \supseteq 10B_Q$. By applying, Lemma \ref{l:caccio-replacement}, Lemma \ref{l:reverse-triangle-inequality}, Corollary \ref{c:second-order-caccio} and \eqref{e:optimal-l}, we have 
	\begin{align}
		\begin{split}\label{e:third}
		\| \cradial_k (g_{k+1} - g_k) \|^2_{L^2(Q)} &= \| \cradial_k |\myHess (l_Q - l_{Q'}) |_\HS \|^2_{L^2(Q)} \\
		&\lesssim_{\cRCD(X),\Cdoub,\CPI} \left\| \frac{l_Q - l_{Q'}}{\cradial_k}\right\|^2_{L^2(4B_Q)} \\
		&\lesssim \left\| \frac{f - l_Q}{\cradial_{k+1}}\right\|_{L^2(4B_Q)}^2 + \left\| \frac{f - l_{Q'}}{\cradial_k} \right\|_{L^2(4B_Q)}^2 \\
		&\lesssim \har{f}(R_Q)^2 \mu(R_Q) + \har{f}(R_{Q'})^2 \mu(R_{Q'}). 
		\end{split}
	\end{align}
	If $k = k^*-1$, a similar argument implies 
	\begin{align}\label{e:fourth}
		\| \cradial_0 (g_{k^*} - g_{k^*-1}) \|^2_{L^2(Q)} \lesssim_{\cRCD(X),\Cdoub,\CPI} \har{f}(R_Q)^2 \mu(R_Q) + \| \nabla f \|_{L^2(3\API B_{Q_0})}^2. 
	\end{align}
	Indeed, the only modification one needs to make is the final estimate in \eqref{e:third}. Recall that for $Q' \in \cubes_{k^*-1}$, we have $l_{Q'} \equiv \langle f \rangle_{3B_{Q'}}$. It then follows from the Poincar\'e inequality and the fact that $4B_Q \subseteq 3B_{Q'}$ that  
	 \begin{align}
	 	\left\| \frac{f - l_{Q'}}{\cradial_k} \right\|_{L^2(4B_Q)}^2 \leq \left\| \frac{f - l_{Q'}}{\cradial_k} \right\|_{L^2(3B_{Q'})}^2 \lesssim_{\CPI}  \| \nabla f \|^2_{L^2(3\API B_{Q_0})}. 
	 \end{align}
	
	Returning to \eqref{e:above}, by applying \eqref{e:third}, \eqref{e:fourth}, and Lemma \ref{l:control-R_Q}, we now have 
	\begin{align}
		\sum_{k=k^*}^M \| \cradial_k g_k \|^2_{L^2(Q_0)}  &\lesssim_{\cRCD(X)} \| \nabla f \|_{L^2(3\API B_{Q_0})}^2 + \sum_{k=k^*}^{M} \sum_{Q \in \cubes_k(Q_0)} \har{f}(R_Q)^2\mu(R_Q) \\
		&\lesssim_{\cRCD(X)} \| \nabla f \|^2_{L^2(3\API B_{Q_0})},
	\end{align}
	which finishes the proof. 
\end{proof}

\section{Estimating the gradient}\label{s:if-statement}

In this section, we prove the remaining implication of Theorem \ref{t:RCD-2}. Namely, the norm of the gradient can be controlled by the square-sum of the $\har{f}$ coefficients. Recall, this also implies the remaining implication in Theorem \ref{t:RCD}. Fix a space $(X,d,\mu)$ that is $\RCD(K,N)$ space for some $K \in \R$ and $N \in (1,\infty)$, and assume that $\diam(X) < \infty$ in the case that $K < 0$. Fix also some $f \in L^2(X)$ for the remainder of the section. \\

Again, we would like to discretise via Lemma \ref{l:continuous-to-discrete}. Suppose for the moment that $\diam(X) < \infty$, and let $k^* \in \Z$ be the smallest integer such that $30\rho^{k^*} \leq \diam(X)/2$. For use later, we note that minimality implies 
\begin{align}\label{e:k^*-choice}
	(\rho/6)\diam(X) < 5\rho^{k^*}. 
\end{align}
With this choice of $k^*$, if $Q \in \cubes_{k^*}$, then $6B_Q$ is contained in a ball of radius $\diam(X)/2$. Moreover, there are at most finitely many (depending on $\Cdoub$) cubes in $\cubes_k$, which translates to the fact that there are only finitely many corresponding balls. Thus, by Lemma \ref{l:continuous-to-discrete} (2), 
\begin{align}
	\sum_{k=k^*}^\infty \sum_{Q \in \cubes_k} \har{f}(6B_Q)^2 \mu(Q) \lesssim_{\Cdoub} \int_0^{\diam(X)} \int_X \har{f}(x,r)^2 \, \frac{d\mu dr}{r}. 
\end{align}
If $\diam(X) = \infty$, the same estimate follows immediately from Lemma \ref{l:continuous-to-discrete} (2) with $k^* = -\infty$. Hence, it suffices to prove the following.

\begin{prop}\label{p:gradient-bound}
	Let $\cubes$ be a system of Christ-David cubes on $X$ and let $k^*$ be as above. If 
	\begin{align}\label{e:sufficient-gradient-bound}
		\sum_{k=k^*}^\infty \sum_{Q \in \cubes_k} \har{f}(6B_Q)^2 \mu(Q) < \infty, 
	\end{align}
	then $f \in N^{1,2}(X).$ In this case, 
	\begin{align}\label{e:gradient-bound}
		\| \nabla f  \|_{L^2(X)}^2 \leq C_1 \left\| \frac{f - \langle f \rangle_X}{\diam(X)} \right\|^2_{L^2(X)} + C_2 \sum_{k=k^*}^\infty \sum_{Q \in \cubes_k}\har{f}(6B_Q)^2 \mu(Q)
	\end{align}
	for some absolute constant $C_1$ and some constant $C_2$ depending only on $\cRCD(X)$, as in  \eqref{e:cRCD(X)}. 
\end{prop}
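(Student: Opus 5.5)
Since $f$ is only assumed to lie in $L^2(X)$, I will not work with $f$ directly. I will build a sequence of regularisations $f_k$ at scale $\cradial_k$ that are uniformly bounded in $N^{1,2}(X)$ and converge to $f$ in $L^2$. Lower semicontinuity of the Cheeger energy then gives $f \in N^{1,2}(X)$ together with \eqref{e:gradient-bound}.

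\emph{Construction.} For each $k \geq k^*$ and $Q \in \cubes_k$, choose $h_Q$ harmonic in $6B_Q$ with $\har{f}(6B_Q;h_Q) \leq 2\har{f}(6B_Q)$. Fix a Lipschitz partition of unity $\{\psi_Q\}_{Q \in \cubes_k}$ subordinate to the cover $\{2B_Q\}$, built from the cut-offs of Proposition \ref{l:cut-off}, so that $\cradial_k|\nabla\psi_Q| \lesssim 1$ and the supports have bounded overlap by Lemma \ref{l:bounded-overlap}. Set
\begin{align}
F_k = \sum_{Q \in \cubes_k} \psi_Q h_Q .
\end{align}
Since $\sum_Q \nabla \psi_Q = 0$, on $Q' \in \cubes_k$ we can write $\nabla F_k = \sum_Q \psi_Q \nabla h_Q + \sum_Q (h_Q - h_{Q'})\nabla\psi_Q$. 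Two estimates control this. First, $\|F_k - f\|_{L^2}^2 \lesssim \cradial_k^2 \sum_{Q\in\cubes_k} \har{f}(6B_Q)^2\mu(Q)$, which tends to $0$ because the tail of the convergent sum \eqref{e:sufficient-gradient-bound} vanishes. So $F_k \to f$ in $L^2$. Second, the error term obeys $\|(h_Q-h_{Q'})/\cradial_k\|_{L^2(2B_Q)}^2 \lesssim (\har{f}(6B_Q)^2+\har{f}(6B_{Q'})^2)\mu(Q)$, which is summable.

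\emph{Main term.} The remaining task is to bound $\sum_Q \|\nabla h_Q\|_{L^2(2B_Q)}^2$ at level $k$ uniformly in $k$. Caccioppoli (Lemma \ref{l:caccio-replacement}) alone gives only $\cradial_k^{-2}\|h_Q-\lambda\|^2$, which does not close. Instead I will compare with the parent $P \in \cubes_{k-1}$, using the fact that $6B_Q \subseteq 6B_P$ and that $h_Q - h_P$ is harmonic on $6B_Q$. Caccioppoli gives
\begin{align}
\|\nabla h_Q\|_{L^2(2B_Q)} \leq \|\nabla h_P\|_{L^2(2B_Q)} + C\cradial_k^{-1}\|h_Q - h_P\|_{L^2(4B_Q)}.
\end{align}
The last term is $\lesssim (\har{f}(6B_Q)^2\mu(Q) + \har{f}(6B_P)^2\mu(P))^{1/2}$. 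The difficulty is that the doubled regions $2B_Q$ overlap, so summing $\|\nabla h_P\|^2_{L^2(2B_Q)}$ over children does not simply return $\|\nabla h_P\|_{L^2(2B_P)}^2$. Iterating therefore produces the constant factor each generation, which is not acceptable.

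\emph{Main obstacle and how I would handle it.} To avoid compounding constants, I will measure energy through a single quantity. I will define $E_k = \|\nabla F_k\|_{L^2(X)}^2$, or better $E_k = \sum_{Q} \int \psi_Q|\nabla h_Q|^2$, and prove the recursion $E_k \leq E_{k-1} + C\,(\text{level-}k \text{ and level-}(k-1)\text{ coefficient sums})^{1/2}E_{k-1}^{1/2} + C(\ldots)$. This uses the exact identity $\int\psi_Q|\nabla h_Q|^2 \le \int\psi_Q|\nabla h_P|^2 + 2\int\psi_Q\langle\nabla h_P,\nabla(h_Q-h_P)\rangle + \ldots$ together with $\sum_{Q\subseteq P}\psi_Q$ being comparable to $\psi_P$ only approximately.

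This is precisely the delicate point. If the partition-of-unity mismatch cannot be absorbed, I will fall back on the heat-flow approach of \cite{hytonen2019heat}, as the introduction suggests. There, $F_t = P_t f$ satisfies $\frac{d}{dt}\|P_tf\|^2 = -2\|\nabla P_t f\|^2$, and one bounds $t\|\nabla P_t f\|^2$ locally by replacing $f$ with $h_Q$ on $6B_Q$. Because $h_Q$ is harmonic, $P_t h_Q - h_Q$ is small by Gaussian heat kernel bounds on RCD spaces. This yields
\begin{align}
\int_0^{T}\|\nabla P_t f\|^2\,dt \lesssim \|f-\langle f\rangle_X\|^2/\diam(X)^2 \cdot T + \sum_Q \har{f}(6B_Q)^2\mu(Q),
\end{align}
and since $t\mapsto\|\nabla P_t f\|^2$ is nonincreasing, the bound passes to $\|\nabla P_t f\|^2$ uniformly as $t\to0$. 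The base scale $k^*$, or equivalently $t\sim\diam(X)^2$, contributes the $C_1$ term via the spectral gap/Poincaré inequality. Finally, $P_tf \to f$ in $L^2$ and lower semicontinuity conclude.
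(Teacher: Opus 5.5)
There is a genuine gap: neither of your two routes closes.

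\textbf{The primary route.} Even if you grant the partition-of-unity bookkeeping, the recursion you aim for, $E_k \le E_{k-1} + C S_k^{1/2} E_{k-1}^{1/2} + C S_k$ (with $S_k$ the coefficient sums at levels $k$ and $k-1$), only yields $E_k^{1/2} \le E_{k-1}^{1/2} + C S_k^{1/2}$. Hence $\sup_k E_k^{1/2} \lesssim E_{k^*}^{1/2} + \sum_k S_k^{1/2}$. The hypothesis \eqref{e:sufficient-gradient-bound} is $\sum_k S_k < \infty$, which does not control $\sum_k S_k^{1/2}$. Absorbing the cross term via $ab \le \epsilon a^2 + C_\epsilon b^2$ instead gives a factor $1+\epsilon$ per generation, which is exactly the compounding you wanted to avoid. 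The cross term $\int \psi_Q \langle \nabla h_P, \nabla(h_Q - h_P)\rangle \, d\mu$ is genuinely of size $E^{1/2}S^{1/2}$. In Section \ref{s:PI} such terms were killed by orthogonality, but that relied on $f_{k+1} - f_k \in N^{1,2}_0(Q)$, i.e.\ on solving Dirichlet problems with data $f$. Here $f$ is only in $L^2$, and $h_Q - h_P$ has no boundary relation. What is missing is an almost-orthogonality between scales, not a sharper one-step comparison.

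\textbf{The fallback.} Your fallback does not supply this, because the heat-flow quantity you estimate carries no information about $\nabla f$. Integrating $\frac{d}{dt}\|P_t f\|^2 = -2\|\nabla P_t f\|^2$ gives $\int_0^T \|\nabla P_t f\|^2\,dt = \tfrac12(\|f\|^2 - \|P_T f\|^2) \le \tfrac12\|f\|^2$ for every $f \in L^2(X)$. Your displayed bound is also not scale-invariant: its left side scales like $\|f\|^2$, while the coefficient sum scales like $\|f\|^2/r^2$. The scale-correct identity has one more derivative, e.g.\ $\|\nabla g\|^2 = 2\int_0^\infty \|\Delta P_t g\|^2\,dt$. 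Moreover, since $t \mapsto \|\nabla P_t f\|^2$ is nonincreasing, an integral bound on $(0,T)$ only gives $\|\nabla P_T f\|^2 \le T^{-1}\int_0^T \|\nabla P_t f\|^2\,dt$. That is a bound at the right endpoint which degenerates as $T \to 0$, not a uniform bound as $t \to 0$. Finally, ``$P_t h_Q - h_Q$ is small because $h_Q$ is harmonic'' is not meaningful: $h_Q$ is harmonic only in $6B_Q$ and is arbitrary outside.

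\textbf{What the paper does.} It uses the correctly scaled square function. Lemma \ref{l:6-1}, via spectral calculus and the telescoping Lemma \ref{l:heat-telescope}, gives, uniformly in $s$ (with $H_t$ your $P_t$),
\[
\|\nabla H_s f\|^2 \lesssim \Big\| \frac{f - \langle f \rangle_X}{\diam(X)} \Big\|^2 + \int_0^{\alpha \diam(X)} \Big\| \frac{H_{2t^2} f - f}{t} \Big\|^2 \, \frac{dt}{t}.
\]
This is where the cross-scale orthogonality is hidden. For $t \in (\cradial_{k+1}, \cradial_k)$ one splits $H_{2t^2}f - f = (H_{2t^2} - I)(f - p_k) + \int_0^{2t^2} H_u \Delta p_k \, du$, where $p_k$ is exactly your glued $F_k$. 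The first piece is handled by your $L^2$ estimate. The second needs $\|\cradial_k \Delta p_k\|^2 \lesssim \sum_{Q \in \cubes_k} \har{f}(6B_Q)^2 \mu(Q)$. This holds because on $Q$,
\[
\Delta p_k = \sum_R \big[\Delta\theta_R\,(h_R - h_Q) + 2\langle \nabla \theta_R, \nabla(h_R - h_Q)\rangle\big],
\]
combined with the Laplacian bounds on the cut-offs from Proposition \ref{l:cut-off} and Caccioppoli. The point is to estimate $\Delta p_k$, which only sees the differences $h_Q - h_R$ and is therefore small, rather than $\nabla p_k$, which is not. The contributions of different $k$ then add in $\ell^2$, as the hypothesis requires.
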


To prove Proposition \ref{p:gradient-bound}, we will make use of various properties of the heat semigroup. Consider the operator $-\Delta$ which is a non-negative definite self-adjoint operator on $X$. The \textit{heat semigroup} associated to $-\Delta$ is the family $\{H_t\}_{t \geq 0}$ of operators from $L^2(X)$ into itself given by $H_t \coloneqq e^{\Delta t}$. It satisfies the following list of properties (see, for example, \cite{fukushima2011dirichlet}).

\begin{enumerate}[label=(H\arabic*),ref=H\arabic*]
	\item \label{H:1} For each $s,t > 0$, 
	\begin{align}
		H_{s+t} = H_s H_t. 
	\end{align}
	\item \label{H:2} For each $g \in L^2(X)$,  
	\begin{align}
		\|H_t g - g \|_{L^2(X)} \to 0 \mbox{ as } t \to 0. 
	\end{align}
	\item \label{H:3} For each $t > 0$, if $\| \cdot \|_{2 \to 2}$ denotes the operator norm from $L^2$ to $L^2$, then 
	\begin{align}
		\| H_t \|_{2 \to 2} \leq 1 \quad \mbox{ and } \quad \| \nabla H_t \|_{2 \to 2} \leq ct^{-\frac{1}{2}}.
	\end{align}
	\item \label{H:4} For each $t > 0$ and $g \in L^2(X)$, 
	\begin{align}
		H_t g \in D(\Delta) \quad \mbox{ and } \quad \partial_t H_t g  = \Delta  H_t g. 
	\end{align}
	\item \label{H:5} For any continuous $\phi \colon [0,\infty) \to [0,\infty)$, $\phi(-\Delta)$ is non-negative definite self-adjoint operator which commutes with $H_t$, $t > 0$.
\end{enumerate}

\

It will be useful to note that, by \eqref{e:harmonics} and the fact that $(-\Delta)^\frac{1}{2}$ is non-negative self-adjoint, we have
\begin{align}\label{e:rep-A}
	\| \nabla g \|^2 = \langle -\Delta g , g \rangle  = \| (-\Delta)^\frac{1}{2} g \|^2  
\end{align} 
for all $g \in D(\Delta)$. \\

Finally, we will need the following lemma, which is essentially contained in \cite{hytonen2019heat} (see Equations (55)--(58) there). We include the short proof for completeness.

\begin{lem}\label{l:heat-telescope}
	For every $g \in L^2(X)$ we have 
	\begin{align}
		\int_0^\infty \| \sqrt{t} \nabla  H_t g \|_{L^2(X)}^2 \, \frac{dt}{t} \lesssim \int_0^\infty \| \sqrt{t} \nabla H_t(H_{3t} - H_{t})g \|_{L^2(X)}^2 \, \frac{dt}{t}. 
	\end{align}
\end{lem}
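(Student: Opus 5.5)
The plan is to compute both sides exactly via the spectral theorem for the non-negative self-adjoint operator $-\Delta$ on $L^2(X)$. This is the functional calculus already implicit in \eqref{H:5} and in the definition $H_t = e^{t\Delta}$. Let $E$ be the spectral resolution of $-\Delta$, and for $g \in L^2(X)$ let $d\nu_g(\lambda) = d\langle E(\lambda) g, g\rangle$. This is a finite positive measure on $[0,\infty)$ with total mass $\|g\|_{L^2(X)}^2$.

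First, fix $t>0$. By \eqref{H:4}, $H_t g \in D(\Delta)$, and likewise $H_t(H_{3t}-H_t)g = (H_{4t}-H_{2t})g \in D(\Delta)$ by \eqref{H:1}. Applying \eqref{e:rep-A} and the functional calculus gives
\begin{align}
\| \sqrt{t}\,\nabla H_t g\|_{L^2(X)}^2 &= t\langle -\Delta H_{2t} g, g\rangle = \int_{[0,\infty)} t\lambda e^{-2t\lambda}\, d\nu_g(\lambda), \\
\| \sqrt{t}\,\nabla H_t(H_{3t}-H_t) g\|_{L^2(X)}^2 &= \int_{[0,\infty)} t\lambda e^{-2t\lambda}\big(e^{-3t\lambda}-e^{-t\lambda}\big)^2\, d\nu_g(\lambda),
\end{align}
where we use that all the operators involved are functions of $-\Delta$ and hence commute.

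Next, integrate against $dt/t$ over $(0,\infty)$. The integrands are non-negative, so Tonelli lets us swap the order of integration. The point $\lambda=0$ contributes zero to both sides. For $\lambda>0$, substitute $s = t\lambda$. The left-hand side becomes
\begin{align}
\int_{(0,\infty)} \int_0^\infty e^{-2s}\, ds\, d\nu_g = \tfrac12\, \nu_g((0,\infty)).
\end{align}
The right-hand side becomes $c\,\nu_g((0,\infty))$, where
\begin{align}
c = \int_0^\infty e^{-2s}(e^{-3s}-e^{-s})^2\,ds = \tfrac14 - \tfrac13 + \tfrac18 = \tfrac1{24} > 0.
\end{align}
Hence the left-hand side equals exactly $12$ times the right-hand side, which proves the lemma with an absolute constant. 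Both sides are finite, bounded by $\tfrac12\|g\|_{L^2(X)}^2$.

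The main obstacle is justifying the spectral representation itself, namely that $\|\nabla u\|^2 = \int \lambda\, d\nu_u$ for $u$ in the domain. This follows from \eqref{e:rep-A} together with the identity $\|(-\Delta)^{1/2}u\|^2 = \int\lambda\, d\nu_u$, which holds for self-adjoint operators. If one prefers to avoid the spectral theorem and stay closer to the telescoping argument of \cite{hytonen2019heat}, I would use a different route. Write $H_{2t}g = -\sum_{j\ge0}(H_{3^{j+1}\cdot 2t}-H_{3^j\cdot 2t})g$, which uses \eqref{H:2} at $t\to\infty$ on the orthogonal complement of $\ker\Delta$. Then bound each summand by the right-hand side at the scale $3^j t$, using the decay $\|\nabla H_s\|_{2\to2}\lesssim s^{-1/2}$ from \eqref{H:3} and a change of variables in $dt/t$. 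The spectral computation above is cleaner, so it is the one I would write.
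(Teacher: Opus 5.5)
Your spectral computation is correct, and it gives more than the lemma asks. Both sides are integrals against $\nu_g$ on $(0,\infty)$ of kernels that are invariant under $t\mapsto t\lambda$. After Tonelli and the substitution $s=t\lambda$ they equal $\tfrac12\nu_g((0,\infty))$ and $\tfrac1{24}\nu_g((0,\infty))$ respectively; the arithmetic $\tfrac18-\tfrac13+\tfrac14=\tfrac1{24}$ checks out. So you get an exact identity with constant $12$.

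The paper argues differently, following Hyt\"onen--Naor. It passes to $(-\Delta)^{1/2}H_tg$ via \eqref{e:rep-A} and uses the decay in \eqref{H:3} to justify the dyadic telescoping sum
\[
(-\Delta)^{\frac12}H_tg=\sum_{k\geq -1}(-\Delta)^{\frac12}H_{2^kt}\left(H_{2^kt}-H_{3\cdot 2^kt}\right)g .
\]
It then applies Minkowski's inequality in $L^2((0,\infty),dt/t;L^2(X))$ and substitutes $u=2^kt$ in each term. This produces the summable factor $2^{-k/2}$.

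Comparing the two:
\begin{itemize}
\item Your route is shorter and gives the sharp constant, together with the bound $\tfrac12\|g\|_{L^2(X)}^2$ for free.
\item It costs nothing extra, since the paper already uses the spectral resolution of $-\Delta$ in the proof of Lemma \ref{l:6-1}.
\item The paper's route uses only the semigroup law, the gradient decay $\|\nabla H_t\|_{2\to2}\lesssim t^{-1/2}$ and the triangle inequality. It does not rely on diagonalising $-\Delta$, so it is the more robust argument, for instance towards $L^p$-type square functions (cf.\ the $p\neq 2$ question in the introduction).
\end{itemize}

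One remark on your fallback sketch, which you do not rely on. The ratio-$3$ differences $H_{6\cdot 3^jt}-H_{2\cdot 3^jt}$ are not of the form $H_s(H_{3s}-H_s)=H_{4s}-H_{2s}$, so the telescoping must be dyadic as above. Also, the limit as $t\to\infty$ comes from \eqref{H:3}, applied after $(-\Delta)^{1/2}$, which also disposes of $\ker\Delta$. It does not come from \eqref{H:2}, which concerns $t\to 0$.
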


\begin{proof}
 For brevity, let us write $\|\cdot\| \coloneqq \|\cdot\|_{L^2(X)}$. By \eqref{H:3} and \eqref{e:rep-A}, we know that $\| (-\Delta)^\frac{1}{2} H_t g \| \to 0$ as $t \to \infty$. Thus, we can write 

	\begin{align}
		(-\Delta)^\frac{1}{2}H_t g &= \sum_{k=-1}^\infty (-\Delta)^\frac{1}{2}\left( H_{2^{k+1}t}g  - H_{2^{k+2}t}g \right) \\
		&= \sum_{k=-1}^\infty (-\Delta)^\frac{1}{2} H_{2^kt}\left(H_{ 2^kt} - H_{3\cdot 2^kt} \right) g .
	\end{align}
	Applying this, equation \eqref{e:rep-A}, the triangle inequality (in $L^2( (0,\infty), dt/t ; L^2(X,\R) )$), and making a change of variables $s = 2^\frac{k}{2}$, we have 
	\begin{align}
		\left (	\int_0^\infty \| \sqrt{t} \nabla  H_t g \|^2 \, \frac{dt}{t} \right)^\frac{1}{2} &\leq \sum_{k=-1}^\infty \left( 	\int_0^\infty \| \sqrt{t} \nabla   H_{2^kt}\left(H_{3\cdot 2^kt} - H_{2^kt} \right) g \|^2 \, \frac{dt}{t} \right)^\frac{1}{2} \\
		&= \sum_{k=-1}^\infty 2^{-\frac{k}{2}} \left(  \int_0^\infty \| \sqrt{t} \nabla   H_{t}\left(H_{3t} - H_{t} \right) g \|^2 \, \frac{dt}{t} \right)^\frac{1}{2} \\
		&\lesssim \left(  \int_0^\infty \| \sqrt{t} \nabla   H_{t}\left(H_{3t} - H_{t} \right) g \|^2 \, \frac{dt}{t} \right)^\frac{1}{2},
	\end{align}
	as required. 
\end{proof}

With these properties established, let us return to the proof of Proposition \ref{p:gradient-bound}. First, \eqref{H:3} implies that $H_sf \in N^{1,2}(X)$ for all $s > 0$. Then, since the mapping $g \mapsto \| \nabla g \|_{L^2(X)}$ is lower semicontinuous with respect to convergence in $L^2$ (see \cite[Theorem 2.5]{cheeger1999differentiability}), and $H_s f \to f$ in $L^2$ as $s \to 0$ (recall \eqref{H:2}), it suffices to show the following. 

\begin{lem}\label{l:sufficient}
	Let $\cubes$ be a system of Christ-David cubes on $X$ and let $k^*$ be as above. We have 
	\begin{align}\label{e:discrete-C}
	\| \nabla  H_s f \|_{L^2(X)}^2\leq C_1 \left\| \frac{f - \langle f \rangle_X}{\diam(X)} \right\|^2_{L^2(X)} + C_2 \sum_{k=k^*}^\infty \sum_{Q \in \cubes_k}\har{f}(6B_Q)^2 \mu(Q)
	\end{align}
	for some absolute constant $C_1$ and some constant $C_2$ depending only on $\cRCD(X)$ i.e., both are independent of $s$.
\end{lem}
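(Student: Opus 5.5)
We bound $\|\nabla H_s f\|^2$ by the full square function $\int_0^\infty \|\sqrt{t}\nabla H_t f\|^2\,dt/t$, split it into a large-time piece and a small-time piece, and control the small-time piece by local harmonic approximation at each dyadic scale.

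\emph{Step 1: reduce to a square function.} By \eqref{e:rep-A}, \eqref{H:1} and \eqref{H:5}, the map $t\mapsto \|\nabla H_t f\|^2$ is non-increasing. Hence
\begin{align}
\|\nabla H_s f\|^2 \lesssim \int_{s/2}^{s}\|\nabla H_t f\|^2\,dt \le \int_0^\infty \|\sqrt t\nabla H_t f\|^2\,\frac{dt}{t}.
\end{align}
Lemma \ref{l:heat-telescope} then bounds the right-hand side by $\int_0^\infty \|\sqrt t\nabla H_t(H_{3t}-H_t)f\|^2\,dt/t$. We split this integral at $t\sim \diam(X)^2$.

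\emph{Step 2: large times.} For $t\gtrsim\diam(X)^2$ we use $(H_{3t}-H_t)\langle f\rangle_X=0$ together with \eqref{H:3}:
\begin{align}
\|\sqrt t\nabla H_t(H_{3t}-H_t)f\|\lesssim \|(H_{3t}-H_t)(f-\langle f\rangle_X)\| \lesssim \|f-\langle f\rangle_X\|.
\end{align}
We need an extra decay factor $\diam(X)^2/t$ to make the $dt/t$ integral converge. For this I would use the spectral gap on bounded $\RCD$ spaces (Poincar\'e at the scale of $X$), which gives exponential decay of $H_t$ on mean-zero functions. This yields the $C_1$ term.

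\emph{Step 3: small times.} Take $t\sim\cradial_k^2$ with $k\ge k^*$. Write $(H_{3t}-H_t)f=\int_t^{3t}\Delta H_u f\,du$, so it suffices to control $\|t\,\Delta H_u f\|$ for $u\sim t$. Fix a partition of unity $\{\psi_Q\}_{Q\in\cubes_k}$ subordinate to $\{2B_Q\}$, built from the cut-offs of Proposition \ref{l:cut-off}, so that $\cradial_k|\nabla\psi_Q|+\cradial_k^2|\Delta\psi_Q|\lesssim1$. Let $h_Q$ be a near-optimal harmonic approximant on $6B_Q$. Define the local replacement
\begin{align}
\tilde f_k=\sum_{Q\in\cubes_k}\psi_Q h_Q.
\end{align}
On $2B_Q$ we have $\Delta h_Q=0$, so $\Delta\tilde f_k$ only involves $\nabla\psi_Q$, $\Delta\psi_Q$ and the differences $h_Q-h_{Q'}$ for neighbouring cubes. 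Caccioppoli (Lemma \ref{l:caccio-replacement}) and bounded overlap (Lemma \ref{l:bounded-overlap}) then give
\begin{align}
\|\cradial_k^2\Delta\tilde f_k\|^2\lesssim \sum_{Q\in\cubes_k}\har{f}(6B_Q)^2\mu(Q),
\end{align}
where $\sum_Q\psi_Q=1$ is used to cancel the term $\sum_Q\Delta\psi_Q\,h_Q$ against $\sum_Q\Delta\psi_Q\,f$. Likewise $\|(f-\tilde f_k)/\cradial_k\|^2$ is bounded by the same sum. Now split $f=(f-\tilde f_k)+\tilde f_k$. For the first piece we use $\|\sqrt t\nabla H_t\|\lesssim1$ and $\|t\Delta H_u\|_{2\to2}\lesssim1$, which gives a bound by $\|(f-\tilde f_k)/\cradial_k\|$. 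For the second piece we use $\|\sqrt t\nabla H_t\, t\,\Delta H_u\tilde f_k\|\lesssim \|t\,\Delta\tilde f_k\|$ by \eqref{H:5}. Summing over $k$ (each dyadic $t$-range counted once) gives the $C_2$ term.

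\emph{Main obstacle.} The hardest step is the estimate for $\Delta\tilde f_k$. The cross terms $\langle\nabla\psi_Q,\nabla h_Q\rangle$ must be rewritten using $\sum_Q\nabla\psi_Q=0$ as $\sum_Q\langle\nabla\psi_Q,\nabla(h_Q-h_{Q'})\rangle$ with $Q'$ a fixed neighbour. They are then bounded via Caccioppoli on the overlap $2B_Q\cap 2B_{Q'}\subset 6B_{Q'}$, in terms of the $L^2$ deviations of $h_Q$ and $h_{Q'}$ from $f$. A second point to check is that $\tilde f_k\in D(\Delta)$. This should follow from locality, together with the fact that harmonic functions are locally Lipschitz, as used in Corollary \ref{c:second-order-caccio}.
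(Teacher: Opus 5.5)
\textbf{There is a genuine gap in your Step 1.} It puts the whole argument one derivative too low. Monotonicity of $t\mapsto\|\nabla H_tf\|^2$ only gives $\|\nabla H_sf\|^2\le\frac{2}{s}\int_{s/2}^{s}\|\nabla H_tf\|^2\,dt$, and the factor $2/s$ cannot be dropped. Without it, the quantity you reduce to is an $L^2$ norm, not an energy. Indeed, $\frac{d}{dt}\|H_tf\|^2=-2\|\nabla H_tf\|^2$ by \eqref{H:4} and \eqref{e:rep-A}, so
\[
\int_0^\infty\|\sqrt t\,\nabla H_tf\|^2\,\frac{dt}{t}=\int_0^\infty\|\nabla H_tf\|^2\,dt\le\tfrac12\|f\|_{L^2(X)}^2 ,
\]
whereas $\|\nabla H_sf\|\to\infty$ as $s\to0$ for every $f\in L^2(X)\setminus N^{1,2}(X)$.

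Keeping the correct weight, $\|\nabla H_sf\|^2\le 2\int_{s/2}^s\|\nabla H_tf\|^2\,\frac{dt}{t}$, does not rescue the argument. The integral $\int_0^\infty\|\nabla H_tf\|^2\,dt/t$ diverges at $t=0$ for every non-constant $f$. Moreover, no analogue of Lemma \ref{l:heat-telescope} holds without the weight $\sqrt t$: the geometric factor $2^{-k/2}$ in its proof comes precisely from that weight.

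The mis-scaling then propagates through the rest of the proposal:
\begin{itemize}
\item In Step 2, a spectral gap would only produce $\|f-\langle f\rangle_X\|^2$, not $\|(f-\langle f\rangle_X)/\diam(X)\|^2$.
\item In Step 3, $\|\sqrt t\,\nabla H_t\|_{2\to2}\lesssim1$ bounds your first piece by $\|f-\tilde f_k\|$, not by $\|(f-\tilde f_k)/\cradial_k\|$.
\item Also in Step 3, your claim $\|\cradial_k^2\Delta\tilde f_k\|^2\lesssim\sum_Q\har{f}(6B_Q)^2\mu(Q)$ is off by a factor $\cradial_k^2$. The Caccioppoli argument actually gives $\|\cradial_k\Delta\tilde f_k\|^2\lesssim\sum_Q\har{f}(6B_Q)^2\mu(Q)$.
\end{itemize}

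\textbf{The missing idea is the half-derivative shift in the paper's Lemma \ref{l:6-1}.} By spectral calculus, $\|\nabla f_s\|^2=2\int_0^\infty\|\nabla H_tg_s\|^2\,dt$ with $g_s=(-\Delta)^{1/2}f_s$. Lemma \ref{l:heat-telescope} is applied to $g_s$, not to $f_s$. The extra $(-\Delta)^{1/2}$ is then absorbed using $\|\sqrt t\,\nabla H_t\|_{2\to2}\lesssim1$. This gives a bound by $\int_0^\infty\|\nabla H_t(H_{2t}f_s-f_s)\|^2\,dt/t$, whose integrand carries no $\sqrt t$. Equivalently, spectral calculus gives $\|\nabla g\|^2=c\int_0^\infty\|\nabla H_t(H_{3t}-H_t)g\|^2\,dt/t$ for an absolute $c>0$. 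The paper then passes to $\int_0^\infty\|(H_{2t^2}f-f)/t\|^2\,dt/t$.

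With this normalization the rest of your plan goes through and is essentially the paper's argument:
\begin{itemize}
\item When $\diam(X)<\infty$ and $t\gtrsim\diam(X)^2$, \eqref{H:3} and the invariance of constants bound the integrand by $t^{-1}\|f-\langle f\rangle_X\|^2$. Since $\int_{\diam(X)^2}^\infty t^{-2}\,dt=\diam(X)^{-2}$, no spectral gap is needed.
\item For $t\sim\cradial_k^2$, your split $f=(f-\tilde f_k)+\tilde f_k$ produces exactly the bounds $\|(f-\tilde f_k)/\cradial_k\|$ and $\|\cradial_k\Delta\tilde f_k\|$. This is the paper's $I_1$/$I_2$ estimate, with its $p_k$ in place of your $\tilde f_k$.
\end{itemize}
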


The proof of Lemma \ref{l:sufficient} will be carried out below. Let us fix a system of Christ-David cubes $\cubes$ and let us fix some $s > 0$. We start by showing the following. 

\begin{lem}\label{l:6-1}
	Let $\alpha \coloneqq (\rho / 6)$. We have 
	\begin{align}\label{e:htf-t}
		\| \nabla  H_s f \|_{L^2(X)}^2 \lesssim \left\| \frac{f - \langle f \rangle_X}{\diam(X)} \right\|^2_{L^2(X)} +  \int_0^{\alpha \diam(X)} \left\| \frac{H_{2t_2}f - f}{t} \right\|_{L^2(X)}^2  \, \frac{dt}{t}.
	\end{align}
	 
\end{lem}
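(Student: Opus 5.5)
The idea is to write $\|\nabla H_s f\|^2$ as a time integral of $\|\Delta H_u f\|^2$, and then compare $\Delta H_u f$ with the increments $H_u f - f$ through a self-improving recursion in dyadic time. I read the $H_{2t_2}f$ in the statement as $H_{2t^2}f$, so the relevant heat time at spatial scale $t$ is $u = 2t^2$.

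Write $h \coloneqq f - \langle f \rangle_X$ when $\diam(X)<\infty$, and $h \coloneqq f$ otherwise. Constants are fixed by $H_u$ and annihilated by $\nabla$ and $\Delta$, so $\nabla H_s h = \nabla H_s f$ and $H_u h - h = H_u f - f$. Set $\|\cdot\| \coloneqq \|\cdot\|_{L^2(X)}$.

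\textbf{Step 1 (energy identity).} By \eqref{e:rep-A} and \eqref{H:5}, $\|\nabla H_u h\|^2 = \langle -\Delta H_{2u} h, h\rangle$. By \eqref{H:4} its derivative in $u$ is $-2\|\Delta H_u h\|^2$. We also have $\|\nabla H_u h\| \to 0$ as $u \to \infty$ by \eqref{H:3}. Integrating therefore gives
\begin{align}
\|\nabla H_s f\|^2 = 2\int_s^\infty \|\Delta H_u h\|^2 \, du = 2 \int_s^\infty G(u)^2 \, \frac{du}{u}, \qquad G(u) \coloneqq u^{\frac12}\|\Delta H_u h\|.
\end{align}

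\textbf{Step 2 (recursion).} Split $\Delta H_u h = \Delta H_u (h - H_u h) + \Delta H_{2u} h$. Spectral calculus gives $\|\Delta H_u\|_{2\to 2} \lesssim u^{-1}$, the analogue of \eqref{H:3}. Hence
\begin{align}
G(u) \lesssim u^{-\frac12}\|H_u f - f\| + \tfrac{1}{\sqrt2}\, G(2u).
\end{align}
Take the $L^2((s,\infty), du/u)$ norm. The last term has norm at most $\tfrac{1}{\sqrt2}\|G\|_{L^2((s,\infty),du/u)}$, because $u \mapsto 2u$ preserves $du/u$ and maps $(s,\infty)$ into itself. That norm is finite, since $G(u) \lesssim u^{-1/2}\|h\|$, so it can be absorbed into the left-hand side. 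This gives
\begin{align}
\|\nabla H_s f\|^2 \lesssim \int_0^\infty \frac{\|H_u f - f\|^2}{u} \, \frac{du}{u},
\end{align}
with a constant independent of $s$.

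\textbf{Step 3 (change of variables and tail).} Substitute $u = 2t^2$, so that $du/u = 2\,dt/t$ and $u^{-1} \sim t^{-2}$. The range $t \le \alpha\diam(X)$ produces exactly the integral in \eqref{e:htf-t}. For $t > \alpha\diam(X)$, which only occurs when $\diam(X)<\infty$, we use $\|H_u f - f\| = \|H_u h - h\| \le 2\|h\|$ from \eqref{H:3}. The remaining integral is then at most
\begin{align}
\|h\|^2 \int_{\alpha\diam(X)}^\infty t^{-3}\, dt \lesssim \left\| \frac{f - \langle f\rangle_X}{\diam(X)} \right\|^2,
\end{align}
where the constant depends only on $\alpha = \rho/6$ and is therefore absolute.

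\textbf{Main obstacle.} The delicate point is Step 1 together with the absorption in Step 2. One must justify the identity $\|\nabla H_u h\|^2 \to 0$ as $u\to\infty$ and the differentiation of $u \mapsto \|\nabla H_u h\|^2$ for a general $h \in L^2$. The planned route is to use \eqref{H:3}, \eqref{H:4} and \eqref{H:5} on $u \ge s > 0$, where everything is smooth. It is also important to truncate at $u = s$ before absorbing, so that the $L^2(du/u)$ norm of $G$ is a priori finite.
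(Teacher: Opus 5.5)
Your proposal is correct and follows essentially the paper's route: the same spectral identity $\|\nabla H_s f\|^2 = 2\int_s^\infty \|\Delta H_u f\|^2\,du$ (this is the paper's $2\int_0^\infty \|\nabla H_t g_s\|^2\,dt$ with $g_s = (-\Delta)^{1/2}H_s f$), a dyadic-in-time comparison of $\Delta H_u$ with the increments $H_u f - f$ via $\|u\Delta H_u\|_{2\to 2}\lesssim 1$, the substitution $u=2t^2$, and the same tail estimate from $\|H_u f - f\|\le 2\|f-\langle f\rangle_X\|$. The only difference is that you replace the explicit telescoping sum and Minkowski argument of Lemma \ref{l:heat-telescope} with a one-step recursion absorbed in $L^2((s,\infty),du/u)$. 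This is legitimate because you checked a priori finiteness and the coefficient of $G(2u)$ is exactly $1/\sqrt{2}$; it also spares you the paper's final contraction step $\|H_s(H_{2t^2}f-f)\|\le\|H_{2t^2}f-f\|$.
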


\begin{proof}
	For brevity, let us denote $f_s \coloneqq H_s f$. Let $\{E_\lambda \colon \lambda > 0 \}$ be the unique spectral family of projection operators on $L^2(X)$ associated to $-\Delta$, and consider the spectral representation $-\Delta = \int_0^\infty \lambda \, dE_{\lambda}.$ Since $f_s \in D(\Delta)$, we have by \eqref{e:rep-A}, standard identities in spectral theory, and the identity $\lambda = 2\int_0^\infty \lambda^2 e^{- 2\lambda t} \, dt$, that  
	\begin{align}
		\| \nabla f_s \|^2 = \int_0^\infty \lambda \,  d\langle E_\lambda f_s,f_s\rangle &= 2\int_0^\infty \int_0^\infty  \lambda^2 e^{- 2\lambda t} \, d\langle E_\lambda f_s,f_s\rangle   dt \\
		&=2 \int_0^\infty \| \nabla H_t g_s  \|^2 \, dt,
	\end{align}
	where $g_s \coloneqq (-\Delta)^\frac{1}{2} f_s$ and $\int G(x) dF(x)$ is the Lebesgue–Stieltjes integral. Multiplying and dividing by $t$, and applying Lemma \ref{l:heat-telescope} and \eqref{e:rep-A}, we get 
	\begin{align}
		\| \nabla f_s \|^2 = \int_0^\infty \| \sqrt{t} \nabla  H_t g_s\|^2 \, \frac{dt}{t} \lesssim \int_0^\infty \| \sqrt{t} (-\Delta)^\frac{1}{2} H_t \left(H_{3t} - H_t \right)g_s \|^2 \, \frac{dt}{t}.  
	\end{align}
	Applying the first estimate in \eqref{H:3} (on three separate occasions), commuting $(-\Delta)^\frac{1}{2}$ with $H_t,$ applying \eqref{e:rep-A} and making a change of variable $t = s^2,$ this gives
	\begin{align}
		\begin{split}\label{e:almost}
		\|\nabla f_s\|^2 &\lesssim \int_0^\infty \| H_{3t} g_s - H_t g_s \|^2 \, \frac{dt}{t} = \int_0^\infty \| \nabla H_t (H_{2t}f_s - f_s ) \|^2 \frac{dt}{t} \\
		&\lesssim \int_0^\infty \left\| \frac{H_{2t^2}f_s - f_s}{t} \right\|^2 \, \frac{dt}{t} \leq \int_0^\infty \left\| \frac{H_{2t^2}f - f}{t} \right\|^2  \, \frac{dt}{t}.
		\end{split}
	\end{align}
	
	If $\diam(X) = \infty$, then the above inequality is exactly \eqref{e:htf-t} and we are done. Suppose instead that $\diam(X) < \infty$. By doubling, we know $\mu(X) < \infty$. Thus, $\langle f \rangle_X \in L^2(X)$. Since constant functions are invariant under $H_t$ for each $t > 0$, we can estimate using \eqref{H:3} to get 
	\begin{align}
		\|H_{2t^2}f - f\| \leq \| H_{2t^2}f - H_{2t^2}\langle f \rangle_X\| + \| f- \langle f \rangle_X\| \leq 2\|f - \langle f \rangle_X\|. 
	\end{align}
	Plugging this into \eqref{e:almost}, the final term is bounded from above by a constant multiple of 
	\begin{align}
		\|f - \langle f \rangle_X\|^2 \int_{\alpha \diam(X)}^\infty \frac{dt}{t^3} + \int_0^{\alpha \diam(X)} \left\| \frac{H_{2t^2}f - f}{t} \right\|^2,
	\end{align}
	which is comparable to the right-hand side of \eqref{e:htf-t}. 
\end{proof}

The first term on the right-hand side of \eqref{e:htf-t} is exactly the first term appearing on the right-hand side of \eqref{e:discrete-C}. Thus, our goal for the remainder of the section is to estimate the second term on right-hand side of \eqref{e:htf-t}. We begin by breaking the integral into two pieces. 

Define, for each $k \geq k^*,$ a function $p_k \colon X \to \R$ as follows. First, by applying Proposition \ref{l:cut-off}, construct a partition of unity $\{\theta_Q\}_{Q \in \cubes_k}$ satisfying the properties
\begin{align}
	\supp(\theta_Q) \subseteq 3B_Q, \ | \nabla  \theta_Q(y) | \lesssim \ell(Q)^{-1}, \ |\Delta \theta_Q(y) | \lesssim \ell(Q)^{-2}, \mbox{ and } \sum_{Q \in \cubes_k} \theta_Q(y) = 1
\end{align}
for all $Q \in \cubes_k$ and $y \in X$, where the implicit constants depend on $\cRCD(X)$. For $Q \in \cubes_k,$ let $h_Q \colon X \to \R$ be a function which is harmonic on $6B_Q$ and satisfies 
\begin{align}
	\har{f}(6B_Q;h_Q) \lesssim \har{f}(6B_Q) . 
\end{align}Then, define $p_k$ by 
\begin{align}
	p_k \coloneqq \sum_{Q \in \cubes_k} \theta_Q h_Q.
\end{align}

Let $\cradial_k$ be as in \eqref{e:crad}, and recall that this is chosen so that $\ell(Q) = \cradial_k$ for each $Q \in \cubes_k$. By \eqref{e:k^*-choice} and our choice of $\alpha$ from Lemma \ref{l:6-1}, we know that $\alpha \diam(X) \leq \cradial_{k^*}$. This and \eqref{H:3} imply 
\begin{align}
	 &\int_0^{\alpha \diam(X)} \left\| \frac{H_{2t_2}f - f}{t} \right\|_{L^2(X)}^2  \, \frac{dt}{t}  \\
	 &\hspace{4em}\lesssim \sum_{k=k^*}^\infty \int_{\cradial_{k+1}}^{\cradial_k}  \left\| \frac{f - p_k}{t} \right\|^2 \,\frac{dt}{t} + \sum_{k=k^*}^\infty \int_{\cradial_{k+1}}^{\cradial_k}  \left\| \frac{H_{2t^2}p_k - p_k}{t} \right\|^2 \, \frac{dt}{t} \eqqcolon I_1 + I_2. 
\end{align}

\begin{lem}
	We have 
	\begin{align}
		I_1 \lesssim_{\Cdoub}  \sum_{k=k^*}^\infty \sum_{Q \in \cubes_k} \har{f}(6B_Q)^2 \mu(Q). 
	\end{align}
\end{lem}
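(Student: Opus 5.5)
The plan is to first integrate out $t$ and then localise with the partition of unity. For $t \in [\cradial_{k+1},\cradial_k]$ we have $t^{-2} \le \cradial_{k+1}^{-2} = \rho^{-2}\cradial_k^{-2}$. Also $\int_{\cradial_{k+1}}^{\cradial_k} dt/t = \log(1/\rho)$. Hence
\begin{align}
	I_1 \lesssim_\rho \sum_{k=k^*}^\infty \left\| \frac{f - p_k}{\cradial_k} \right\|_{L^2(X)}^2 ,
\end{align}
and $\rho$ is a fixed constant. It then suffices to show, for each fixed $k \ge k^*$,
\begin{align}
	\left\| \frac{f - p_k}{\cradial_k} \right\|_{L^2(X)}^2 \lesssim_{\Cdoub} \sum_{Q \in \cubes_k} \har{f}(6B_Q)^2\mu(Q).
\end{align}

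To prove this, use $\sum_{Q\in\cubes_k}\theta_Q \equiv 1$ to write $f - p_k = \sum_{Q \in \cubes_k} \theta_Q (f - h_Q)$. Since $0 \le \theta_Q \le 1$ and $\supp\theta_Q \subseteq 3B_Q$, we get pointwise $|f - p_k| \le \sum_{Q} \mathds{1}_{3B_Q}|f-h_Q|$. By Lemma \ref{l:bounded-overlap}, at each point at most $C(\Cdoub)$ of the balls $3B_Q$, $Q\in\cubes_k$, are active. Cauchy--Schwarz then gives $|f-p_k|^2 \lesssim_{\Cdoub} \sum_Q \mathds{1}_{3B_Q}|f-h_Q|^2$. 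Integrating yields
\begin{align}
	\left\| \frac{f - p_k}{\cradial_k} \right\|_{L^2(X)}^2 \lesssim \sum_{Q\in\cubes_k} \int_{6B_Q} \left(\frac{f-h_Q}{\cradial_k}\right)^2 d\mu .
\end{align}

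Each summand is handled as follows. The radius of $6B_Q$ is $6\ell(Q) = 6\cradial_k$, so it equals $36\,\har{f}(6B_Q;h_Q)^2\mu(6B_Q)$. By the choice of $h_Q$ this is $\lesssim \har{f}(6B_Q)^2\mu(6B_Q)$. Finally, $\mu(6B_Q) \lesssim_{\Cdoub} \mu(B(x_Q,c_0\ell(Q))) \le \mu(Q)$ by doubling and Theorem \ref{cubes}(3). Summing over $k$ concludes the proof.

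There is no real obstacle here. The only points needing care are that the bounded-overlap constant and the doubling constant depend only on $\Cdoub$, and that the near-optimal $h_Q$ exist; the latter holds because the infimum is finite by Lemma \ref{l:bounded-and-semicont}, or by taking constants when $f\in L^2$. For $\diam(X)=\infty$ we have $k^*=-\infty$, and the same argument applies verbatim.
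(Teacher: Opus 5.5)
Your proof is correct and follows the same route as the paper. Both arguments use the partition-of-unity identity $f - p_k = \sum_{Q} \theta_Q (f - h_Q)$, bounded overlap of the balls $3B_Q$ together with Cauchy--Schwarz, the near-optimality of $h_Q$ on $6B_Q$ (whose radius $6\cradial_k$ is below $\diam(X)$ by the choice of $k^*$), and doubling to pass from $\mu(6B_Q)$ to $\mu(Q)$. Your global bookkeeping, integrating $\mathds{1}_{3B_Q}|f-h_Q|^2$ over $X$ rather than localising to each $Q$ and summing over neighbours $R$, is slightly cleaner: it only needs $3B_Q \subseteq 6B_Q$, and so avoids the paper's implicit use of $3B_R$ where $6B_R \supseteq Q$ is what is actually required.
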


\begin{proof}
	Fix some $k \geq k^*$ and $x \in Q \in \cubes_k$ for the moment. Since $\supp(\theta_R) \subseteq 3B_R$ and $\{\theta_R\}_{R \in \cubes_k}$ is a partition of unity, we have 
	\begin{align}
		f(x) - p_k(x) =  \sum_{\substack{R \in \cubes_k \\ Q \cap 3B_R \neq\emptyset}} \theta_R(x) (f(x) - h_R(x)) .
	\end{align}
	Thus, by applying Lemma \ref{l:bounded-overlap} and the inequality 
	\begin{align}\label{e:square-triangle}
		\left(\sum_{i=1}^N a_i\right)^2 \leq N^2 \sum_{i=1}^N a_i^2, \quad N \in \N, 
	\end{align} 
	we get 
	\begin{align}\label{e:earlier}
		|f(x) - p_k(x)|^2 \lesssim_{\Cdoub}  \sum_{\substack{R \in \cubes_k \\ Q\cap 3B_R \neq \emptyset}} | f(x) - h_R(x) |^2.
	\end{align}
	In particular, recalling the definition of $\har{f},$ for every $t \in (\cradial_{k+1},\cradial_k)$ we have
	\begin{align}
		\int_Q \left( \frac{f - p_k}{t} \right)^2 \,d\mu &\lesssim_{\Cdoub}	 \sum_{\substack{R \in \cubes_k \\ Q \cap 3B_R \neq\emptyset }} \har{f}(3B_R;h_R)^2 \mu(R).
	\end{align}
	Using this estimate, along with Lemma \ref{l:bounded-overlap} and our choice of $h_Q$ gives 
	\begin{align}
		I_1 \lesssim_{\Cdoub} \sum_{k=k^*}^\infty \sum_{Q \in \cubes_k} \sum_{\substack{R \in \cubes_k \\ Q \cap 3B_R \neq\emptyset}} \har{f}(3B_R;h_Q)^2 \mu(R) \lesssim_{\Cdoub} \sum_{k=k^*}^\infty \sum_{Q \in \cubes_k} \har{f}(6B_Q)^2 \mu(Q).
	\end{align}
\end{proof}

The next lemma finishes the proof of Lemma \ref{l:sufficient}. 
	
	\begin{lem}
		We have 
		\begin{align}
			I_2 \lesssim_{\cRCD(X)}  \sum_{k=k^*}^\infty \sum_{Q \in \cubes_k} \har{f}(6B_Q)^2 \mu(Q). 
		\end{align}
	\end{lem}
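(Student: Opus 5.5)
We need to bound $I_2 = \sum_{k\ge k^*}\int_{\cradial_{k+1}}^{\cradial_k} \| (H_{2t^2}p_k - p_k)/t\|^2 \,dt/t$. Since $t\sim \cradial_k$ on each interval (the ratio is $\rho^{-1}$), it suffices to show that for each $k \geq k^*$ and $t\in(\cradial_{k+1},\cradial_k)$
\begin{align}
\| H_{2t^2}p_k - p_k \|_{L^2(X)}^2 \lesssim_{\cRCD(X)} \cradial_k^4 \| \Delta p_k\|_{L^2(X)}^2,
\end{align}
and then to bound $\cradial_k^2\|\Delta p_k\|^2$ by the local $H$-coefficients. The first bound is pure semigroup calculus: by \eqref{H:4}, $H_{2t^2}p_k - p_k = \int_0^{2t^2} H_\tau \Delta p_k\, d\tau$, so by \eqref{H:3} its norm is at most $2t^2\|\Delta p_k\|$. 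This needs $p_k\in D(\Delta)$, which follows since $h_Q$ is harmonic (hence locally Lipschitz, in $D(\Delta,6B_Q)$) and $\theta_Q$ is a test cut-off supported in $3B_Q$. Dividing by $t^2$ gives a term $\cradial_k^2\|\Delta p_k\|^2$ per scale.

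The main step is the pointwise structure of $\Delta p_k$. By the Leibniz rule for the Laplacian and harmonicity of $h_Q$ on $\supp\theta_Q$,
\begin{align}
\Delta p_k = \sum_{Q\in\cubes_k} \left( h_Q \Delta\theta_Q + 2\langle \nabla\theta_Q,\nabla h_Q\rangle\right).
\end{align}
The key trick is to use $\sum_Q\theta_Q\equiv 1$, so $\sum_Q \Delta\theta_Q = 0$ and $\sum_Q\nabla\theta_Q=0$. Fix $R\in\cubes_k$ and $x\in R$; subtracting $h_R$ we get $\Delta p_k(x) = \sum_{Q:\,3B_Q\cap R\neq\emptyset}\big( (h_Q-h_R)\Delta\theta_Q + 2\langle\nabla\theta_Q,\nabla(h_Q-h_R)\rangle\big)(x)$. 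Both $h_Q$ and $h_R$ are harmonic on a common ball $B^*$ containing $R\cup 3B_Q$ and with $2B^*\subseteq 6B_Q\cap 6B_R$ (up to adjusting constants; this is where the choice of $6B_Q$ is used, and if necessary I would shrink the support of $\theta_Q$ or work with a slightly smaller enlargement). Using the bounds $|\Delta\theta_Q|\lesssim\cradial_k^{-2}$, $|\nabla\theta_Q|\lesssim \cradial_k^{-1}$, bounded overlap (Lemma \ref{l:bounded-overlap}) and \eqref{e:square-triangle}, one gets
\begin{align}
\cradial_k^2\int_R|\Delta p_k|^2 \lesssim \sum_{Q} \left( \cradial_k^{-2}\int_{B^*}|h_Q-h_R|^2 + \int_{B^*}|\nabla(h_Q-h_R)|^2\right),
\end{align}
and Caccioppoli (Lemma \ref{l:caccio-replacement}) applied to the harmonic function $h_Q-h_R$ bounds the gradient term by $\cradial_k^{-2}\int_{2B^*}|h_Q-h_R|^2$. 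Finally the triangle inequality through $f$ gives $\cradial_k^{-2}\int_{6B_Q\cap 6B_R}|h_Q-h_R|^2\lesssim \har{f}(6B_Q;h_Q)^2\mu(6B_Q)+\har{f}(6B_R;h_R)^2\mu(6B_R)$, and doubling with our choice of $h_Q$ turns this into $\har{f}(6B_Q)^2\mu(Q)$ terms.

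Summing over $R\in\cubes_k$ (bounded overlap again, constant depending on $\Cdoub$) and then over $k\geq k^*$ yields $I_2\lesssim_{\cRCD(X)}\sum_{k\ge k^*}\sum_{Q\in\cubes_k}\har{f}(6B_Q)^2\mu(Q)$. The expected obstacle is the geometric bookkeeping in the middle step: ensuring that for neighbouring cubes $Q,R$ at the same level there is a ball on which both $h_Q,h_R$ are harmonic, which contains $R\cap 3B_Q$ with its double, so that Caccioppoli applies; the factor $6$ versus $3$ in the supports is exactly what should make this work, possibly after replacing $3B_Q$ by a slightly smaller support in the partition of unity.
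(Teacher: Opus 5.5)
Your proposal is correct and follows essentially the same route as the paper. You write $H_{2t^2}p_k-p_k=\int_0^{2t^2}H_u\Delta p_k\,du$ to reduce to $\|\cradial_k\Delta p_k\|^2$ at each scale. You then use the partition of unity to express $\Delta p_k$ on a cube through differences $h_Q-h_R$ of neighbouring harmonic approximants; the paper writes $h_Q-p_k=\sum_R\theta_R(h_Q-h_R)$, which is equivalent to your $\sum_Q\Delta\theta_Q=0$, $\sum_Q\nabla\theta_Q=0$ trick. You finish with Caccioppoli, the triangle inequality through $f$, doubling and bounded overlap, as the paper does.

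The geometric point you flag needs no shrinking of supports. Your literal claim of a ball containing $R\cup 3B_Q$ whose double lies in $6B_Q\cap 6B_R$ is false, but it is not needed. If $R\cap 3B_Q\neq\emptyset$ then $d(x_Q,x_R)<4\cradial_k$, so $B^*=B_R$ contains $R$ and satisfies $2B_R\subseteq 6B_Q\cap 6B_R$, which is exactly what Lemma \ref{l:caccio-replacement} requires.
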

	
	\begin{proof}
Recall from \eqref{H:4} that $\partial_u H_u = \Delta H_u$ for each $u > 0$. Thus, we can write 
	\begin{align}
		I_2 = \sum_{k=k^*}^\infty \int_{\cradial_{k+1}}^{\cradial_k}  \left\| \frac{1}{t} \int_0^{2t^2} \partial_u H_u p_k \, du  \right\|^2 \, \frac{dt}{t} = \sum_{k=k^*}^\infty \int_{\cradial_{k+1}}^{\cradial_k}  \left\| \frac{1}{t} \int_0^{2t^2} \Delta H_u p_k \, du  \right\|^2 \, \frac{dt}{t}.
	\end{align}
	By H\"older's inequality, using that $\Delta$ and $H_u$ commute, and $\| H_u \|_{2 \to 2} \leq 1,$ we find that for any $k \geq k^*$ and $t \in (\cradial_{k+1},\cradial_k),$
	\begin{align}
		\left\| \frac{1}{t} \int_0^{2t^2} \Delta H_u p_k \, du  \right\|^2 &\lesssim \int_X t^2 \dashint_0^{2t^2} | H_u \Delta p_k |^2 \, dud\mu  \lesssim \dashint_0^{2t^2} \| \cradial_k H_u \Delta p_k \|^2 \, du \\
		&\leq \| \cradial_k \Delta p_k\|^2 . 
	\end{align}
	
	We will show in the remainder of the proof that  
	\begin{align}
		\| \cradial_k \Delta p_k \|^2 = \sum_{Q \in \cubes_k} \int_Q | \cradial_k \Delta p_k |^2 \, d\mu \lesssim_{\cRCD(X)}  \sum_{Q \in \cubes_k} \har{f}(6B_Q)^2 \mu(Q). 
	\end{align}
	Fix some $Q \in \cubes_k$ for the moment. Since $\{\theta_R\}_{R \in \cubes_k}$ is a partition of unity, it follows that 
	\begin{align}
		h_Q - p_k = \sum_{R \in \cubes_k} \theta_R \left( h_Q - h_R \right).
	\end{align}
	In particular, applying the chain rule for $\Delta$ and recalling that the function $h_Q$ and $h_R$ are harmonic on $6B_Q$ and $6B_R,$ respectively, if $x \in Q$ then 
	\begin{align}
		\Delta p_k(x) &= \Delta (h_Q - p_k)(x)  \\
		&= \sum_{R \in \cubes_k} \left[  \Delta \theta_R(x)(h_Q - h_R)(x) + \nabla \theta_R(x) \cdot \nabla (h_Q - h_R)(x)\right].
	\end{align}
	Using the bounds on the gradient and Laplacian of the cut-off functions, and estimating as we did in \eqref{e:earlier} (by applying Lemma \ref{l:bounded-overlap} and \eqref{e:square-triangle}), we have 
	\begin{align}\label{e:second-to-last}
		| \Delta p_k |^2 \lesssim_{\cRCD(X), \Cdoub} \sum_{\substack{R \in \cubes_k \\ Q \cap 3B_R \neq\emptyset}} \left| \frac{h_Q-h_R}{\cradial_k^2} \right|^2 + \left| \frac{\nabla h_Q - \nabla h_R}{\cradial_k}\right|^2 \quad \mbox{ on } Q. 
	\end{align}
	Since $h_Q$ and $h_R$ are harmonic on $6B_Q$ and $6B_R$, respectively, and $Q \cap 3B_R \subseteq 3B_Q \cap 3B_R,$ we may apply Caccioppoli's inequality (Lemma \ref{l:caccio-replacement}) to obtain
	\begin{align}
		\begin{split}\label{e:last}
		\int_Q \sum_{\substack{R \in \cubes_k \\ Q \cap 3B_R \neq\emptyset}} | \nabla h_Q - \nabla h_R |^2 \, d\mu &\lesssim_{\cRCD(X)} \sum_{\substack{R \in \cubes_k \\ Q \cap 3B_R \neq\emptyset}} \int_{6B_Q \cap 6B_R} \left( \frac{h_Q - h_R}{\cradial_k}\right)^2 \, d\mu \\
		&\lesssim_{\Cdoub} \sum_{\substack{R \in \cubes_k \\ Q \cap 3B_R \neq\emptyset}} \int_{6B_R} \left( \frac{f - h_R}{\ell(R)} \right)^2 \, d\mu  \\
		&\lesssim_{\Cdoub} \sum_{\substack{R \in \cubes_k \\ Q \cap 3B_R \neq\emptyset}} \har{f}(6B_R)^2 \mu(R),
		\end{split}
	\end{align}
	where in the second inequality we added and subtracted $f$, and applied the triangle inequality. Combining \eqref{e:second-to-last} and \eqref{e:last} with Lemma \ref{l:bounded-overlap} and using our choice of $h_Q$, we obtain 
	\begin{align}
		\| \cradial_k \Delta p_k \|^2 &= \sum_{Q \in \cubes_k} \int_Q | \cradial_k \Delta p_k|^2 \, d\mu \lesssim_{\cRCD(X)}  \sum_{Q \in \cubes_k} \sum_{\substack{R \in \cubes_k \\ Q \cap 3B_R \neq\emptyset }} \har{f}(6B_R)^2 \mu(R) \\
		&\lesssim_{\Cdoub} \sum_{Q \in \cubes_k} \har{f}(6B_Q)^2 \mu(Q),
	\end{align}
	as required. 
\end{proof}

\appendix

\section{Proof of Lemma \ref{l:continuous-to-discrete}}

For each $k \in \Z$, we will denote $\cradial_k \coloneqq 5\rho^k$. 

	\begin{proof}[Proof of implication (1)] Fix $z,R,k$ and $\lambda$ as in the statement. Let $\mathcal{Q}_m$ denote the collection of $Q \in \cubes_m$ such that $Q \cap B(x,R) \neq\emptyset$. Since $R \leq \cradial_k$, we first observe that
	\begin{align}\label{e:lala}
		H_D(f,z,R) \lesssim  \sum_{m=k}^\infty \sum_{Q \in \calQ_m} \dashint_{\cradial_{m+1}}^{\cradial_m} \int_Q \har{f,D}(x,r)^2 \, d\mu dr . 
	\end{align}
	
	Fix some $m \geq k$ and $Q \in \calQ_m$ for the moment, and consider $(x,r) \in Q \times (\cradial_{m+1},\cradial_m)$. Since $x \in Q$, $r \leq \cradial_m$ and $\lambda \geq 3$, it follows from the triangle inequality that $B(x,r) \subseteq 3B_Q \subseteq \lambda B_Q$. Using this, and the fact that $\lambda \ell(Q) = \lambda \cradial_m \lesssim_{\lambda} r$, \eqref{e:semi-cont-RCD} now implies 
	\begin{align}
		\har{f,D}(B(x,r)) \lesssim_{\lambda , \Cdoub} \har{f,D}(\lambda B_Q) .
	\end{align}
	Since the above estimate holds for arbitrary $(x,r) \in Q \times (\cradial_{m+1},\cradial_m)$, plugging it back into \eqref{e:lala} gives 
	\begin{align}
		H_D(f,z,R) &\lesssim_{\lambda , \Cdoub} \sum_{m=k}^\infty \sum_{Q \in \calQ_m} \dashint_{\cradial_{m+1}}^{\cradial_m} \int_Q \har{f,D}(\lambda B_Q) ^2 \, d\mu dr \\
		&=\sum_{m=k}^\infty \sum_{Q \in \calQ_m} \har{f,D}(\lambda B_Q) ^2 \mu(Q). 
	\end{align}
	Since, for each $Q \in \calQ_m$ with $m \geq k$ there exists $Q' \in \calQ_k$ such that $Q \subseteq Q'$, this completes the proof of the first implication. 
	\end{proof}
	
	\begin{proof}[Proof of implication (2)] We begin by showing how the first part of (2) implies the second part. Indeed, if the first part holds, for each $k \in \Z$ we have 
	\begin{align}
		\sum_{Q \in \cubes_k} H_D(f,\cubes(Q),\lambda) \lesssim_{\Cdoub} \sum_{Q \in \cubes_k} H_D(f,x_Q,\lambda' \ell(Q)), 
	\end{align}
	where $\lambda' \coloneqq 2 \lambda \rho^{-1}$. By Lemma \ref{l:bounded-overlap}, the balls $\{\lambda' B_Q\}_{Q \in \cubes_k}$ have bounded overlap depending on $\lambda$ and $\Cdoub$. Thus, 
	\begin{align}
		\sum_{Q \in \cubes_k} H_D(f,\cubes(Q),\lambda) \lesssim_{\lambda,\Cdoub} \int_0^{\lambda' \cradial_k} \int_X \har{f,D}(x,r)^2 \, \frac{d\mu dr}{r} \leq H_D(f,x,\infty). 
	\end{align}
	Taking $k \to - \infty$ finishes the proof of the second part. 
	
	Now, let us prove the first part of (2). Fix some $k,Q$ and $\lambda$ as in the statement. Unravelling the definition of $H_D(f,\cubes(Q),\lambda)$, we have 
	\begin{align}\label{e:lala-2}
		 H_D(f,\cubes(Q),\lambda)  &= \sum_{m = k}^\infty \sum_{T \in \cubes_m(Q)}\dashint_{2\lambda \cradial_{m}}^{2\lambda \cradial_{m-1}} \int_T \har{f,D}(\lambda B_T)^2 \, d\mu dr 
		\end{align}
		Fix some $m \geq k$ and $T \in \cubes_m(Q)$ for the moment. If $(x,r) \in T \times (2\lambda\cradial_m , 2\lambda \cradial_{m-1})$, then it follows from the triangle inequality that $\lambda B_T \subseteq B(x,r)$. Since also $r \sim \lambda \ell(Q)$, it follows from \eqref{e:semi-cont-RCD} that 
		\begin{align}
			 \har{f,D}(\lambda B_T) \lesssim_{\Cdoub} \har{f,D}(x,r). 
		\end{align}
		Since the above estimate holds for arbitrary $(x,r) \in T \times (2\lambda\cradial_m , 2\lambda \cradial_{m-1})$, plugging it back in \eqref{e:lala-2} gives 
		\begin{align}
			H_D(f,\cubes(Q),\lambda) &\lesssim_{\Cdoub} \sum_{m = k}^\infty \sum_{T \in \cubes_m(Q)}\int_{2\lambda \cradial_{m}}^{2\lambda \cradial_{m-1}} \int_T \har{f,D}(x,r)^2 \, \frac{d\mu dr}{r} \\
			&= \int_{0}^{2\lambda \cradial_{k-1}} \int_Q \har{f,D}(x,r)^2 \, \frac{d\mu dr}{r}  \leq H_D(f,x_Q,2\lambda\rho^{-1}\ell(Q)). 
		\end{align}
		\end{proof}

\bibliography{Ref.bib}
\bibliographystyle{alpha-1.bst}
\end{document}